\documentclass[11pt,reqno]{article}
\usepackage{amsmath,amsthm,amsfonts,amssymb,amscd,amsbsy}
\usepackage[utf8]{inputenc}
\usepackage{array,colortbl}
\usepackage{tikz}
\usetikzlibrary{cd,arrows,babel}
\usepackage{stmaryrd}
\usepackage{mathtools}
\usepackage{graphicx}
\usepackage[english]{babel}
\usepackage{fullpage}
\usepackage{comment}
\usepackage{mathrsfs}
\usepackage{enumitem}
\usepackage{dsfont}
\usepackage{tikz-cd}
\usepackage{indentfirst}

\setlist[enumerate,1]{label=(\arabic*)}

\usepackage{graphicx}
\usepackage[colorlinks=true, allcolors=blue]{hyperref}
\usepackage{authblk} 

\newcommand{\N}{\mathbb{N}}
\newcommand{\R}{\mathbb{R}}
\newcommand{\K}{\mathbb{K}}
\newcommand{\ideal}{\mathcal{I}}
\newcommand{\I}{\mathcal{I}}
\newcommand{\idealj}{\mathcal{J}}
\newcommand\fin{{\sf Fin}}

\DeclareMathOperator{\ran}{ran}
\DeclareMathOperator{\spn}{span}
\DeclareMathOperator{\cl}{cl}
\DeclareMathOperator{\ba}{ba}

\DeclareMathOperator{\bor}{Bor}
\newcommand{\supp}{\operatorname{supp}}
\newcommand{\Stone}{\operatorname{Stone}}
\newcommand{\cardc}{\mathfrak c}

\newcommand{\dist}{\operatorname{dist}}

\newcommand{\Balg}{\mathfrak B}
\newcommand{\Dil}{\operatorname{Dil}}
\newcommand{\dens}{\operatorname{dens}}

\newtheorem{theorem}{Theorem}[section]
\newtheorem{lemma}[theorem]{Lemma}

\newtheorem{proposition}[theorem]{Proposition}
\newtheorem{corollary}[theorem]{Corollary}
\newtheorem{claim}{Claim}
\AtEndEnvironment{claim}{\null\hfill$\lozenge$}%
\theoremstyle{definition}
\newtheorem{remark}[theorem]{Remark}
\AtEndEnvironment{remark}{\null\hfill$\lozenge$}

\newtheorem{definition}[theorem]{Definition}
\newtheorem{example}[theorem]{Example}
\AtEndEnvironment{example}{\null\hfill$\lozenge$}
\newtheorem{question}[theorem]{Question}

\title{Structure properties of Banach spaces of $\mathcal I$-null sequences}
\author[1]{Michael Alexánder Rincón-Villamizar%
\thanks{Electronic address: \texttt{marinvil@uis.edu.co} }}

\author[2]{Victor dos Santos Ronchim%
 \thanks{Electronic address: \texttt{victor.ronchim@unesp.br}; corresponding author}}
 
\author[1]{Carlos Uzcategui Aylwin%
\thanks{Electronic address: \texttt{cuzcatea@saber.uis.edu.co}}}
\affil[1]{Escuela de Matemáticas, Universidad Industrial de Santander, Bucaramanga, Colombia}
\affil[2]{Department of Biotechnology and Bioprocess, São Paulo State University, Botucatu, SP, Brazil}

\date{
}

\begin{document}
\maketitle
\begin{abstract}
    We investigate structural properties of the ideal-null sequence spaces $c_{0,\mathcal I}$ and $c_{0,\mathcal I}(X)$, as well as of the quotient $\ell_\infty/c_{0,\mathcal I}$, with emphasis on the interplay between Banach space theory and combinatorial, topological, and measure-theoretic properties of the underlying ideal. In this setting, we address several classical problems concerning $c_0$, including the description of the dual space, compactness criteria, bounded and compact operators with range in $c_{0,\mathcal I}$, Sobczyk-type extension phenomena, and complemented copies of $c_0$. We identify $c_{0,\mathcal I}^*$ isometrically with a space of bounded finitely  additive measures on $\mathcal I$ and obtain a canonical atomic--singular decomposition of the dual, with $\ell_1$ as an isometrically complemented summand. 
    A Dini principle for ideal convergence yields characterizations of relatively compact subsets of $c_{0,\mathcal I}$ and, in turn, of compact operators with range in $c_{0,\mathcal I}$. We also establish a Sobczyk theorem for $c_{0,\mathcal I}$ and show that its separable-injectivity constant is exactly $2$ for every proper ideal. Concerning complementation, we construct an explicit ZFC class of statistical ideals for which $c_{0,\mathcal I}$ is not complemented in $\ell_\infty$, and we characterize the existence of complemented copies of $c_0$ in $c_{0,\mathcal I}(X)$ in terms of the corresponding property for $c_{0,\mathcal I}$ and $X$. 
    Finally, we study the structure of $\ell_\infty/c_{0,\mathcal I}$. 
    We obtain isometric Banach-lattice descriptions of quotients associated with direct and Frol\'ik sums of ideals and characterize Banach-lattice copies of $c_0(\kappa)$ in $\ell_\infty/c_{0,\mathcal I}$ in terms of $\mathcal I$-almost disjoint families. As consequences, we identify $\mathfrak{ad}(\mathcal I)$ with the cellularity of the Stone space associated with $\mathcal I$ and determine the behavior of this cardinal invariant under Fubini products.
\end{abstract}

\tableofcontents

\section{Introduction}

An ideal on $\mathbb{N}$ is a collection $\ideal$  of subsets of $\mathbb{N}$ closed under finite unions and taking subsets of its elements. A sequence $(x_n)$ in a Banach space $X$ is said to be $\mathcal{I}$-convergent to $x\in X$, denoted as $\mathcal{I}$-$\lim x_n=x$ or $x_n\stackrel{\I}{\to}x$, if for each $\varepsilon>0$, $A((x_n),\varepsilon)\doteq\{n\in\mathbb{N}\,\colon\,\|x_n-x\|\geq\varepsilon\}\in\mathcal{I}$. 
When $\mathcal{I} =\fin$, the ideal of finite subsets of $\mathbb{N}$, we have the classical convergence in $X$. For this reason, it is natural—and we will adopt this assumption—to require that $\fin$ is contained in every ideal under consideration.  The $\mathcal{I}$-convergence was introduced in \cite{k-s-w}, although many authors had already studied this concept in particular cases and in different contexts (see, for instance, \cite{balcerzak,b-s-w,fast,filipow,k-m-s,Leonetti2018}). In this paper, we study the following space:
\[c_{0,\mathcal{I}}(X)=\{(x_n)\in X^{\mathbb N}\,\colon\,(x_n)\text{ is bounded and }\mathcal{I}-\lim x_n=0\}.\]
If $X$ is the scalar field, we simply write $c_{0,\mathcal I}$ instead of $c_{0,\mathcal I}(X)$. It is easy to see that $c_{0,\mathcal I}$ is a closed subspace of $\ell_\infty$.

As shown in \cite{rincon-uzcategui}, the Banach and Banach-lattice properties of $c_{0,\mathcal I}$ are closely related to the combinatorial and topological properties of the ideal $\mathcal{I}$. We will continue the work done in \cite{rincon-uzcategui} by contributing to the development of a structural theory for these spaces. 

For Banach space theorists, the study of structural properties of classical sequence  spaces is of particular interest. 
In the case of $c_0$,  among the fundamental questions that were well studied in the literature, we highlight: the description of $c_0^*$, the structure of its compact subsets, extension of $c_0$-valued operators, compact operators into $c_0$ and complemented copies of $c_0$ in $\ell_\infty$-sums. One of the purposes of this paper is to investigate these topics for the Banach spaces $c_{0,\mathcal I}$ and $c_{0,\mathcal I}(X)$.

{
\color{black}

A second theme of the paper concerns the quotient space: 
\[\ell_\infty/c_{0,\mathcal{I}}=
\{{\bf x}+c_{0,\mathcal I}\,\colon\,{\bf x}\in\ell_\infty\}.\] 
Just as $c_{0,\mathcal I}$ reflects properties of the ideal $\mathcal I$, the quotient $\ell_\infty/c_{0,\mathcal I}$ also encodes important combinatorial and topological information about $\mathcal I$. 
In particular, its Banach-lattice structure turns out to be closely related to the existence of $\mathcal I$-almost disjoint families.
}

{
\color{black}
The remainder of the paper is organized as follows. After the preliminary material given in Section \ref{preliminares}, Section~\ref{section dual of c0I} is devoted to the dual structure of $c_{0,\mathcal I}$. We identify $c_{0,\mathcal I}^*$ isometrically with the space $\ba(\mathcal I)$ of bounded finitely additive measures on $\mathcal I$ (Theorem~\ref{thm:dual-ba}) and obtain a related isometric representation in the vector-valued setting.  These representations parallel the classical description of $\ell_\infty^*$ (see \cite[Section 10.10 and Theorem 14.4]{Aliprantis2006}). We then show that the classical sequence-space approaches to duality do not produce new spaces in this context and obtain an isometric decomposition of $c_{0,\I}^*$ in which $\ell_1$ appears as a complemented subspace (Theorem~\ref{thm:dual-splitting}).

Section~\ref{section compactness} turns to compact subsets of $c_{0,\mathcal I}$. The starting point is the uniform ideal-tail principle (Theorem~\ref{thm:ideal-dini}), which may be regarded as an ideal version of the uniform conclusion in Dini's theorem. As a consequence, Theorem~\ref{thm:compactness-c0I} provides different characterizations of relative compactness in $c_{0,\mathcal I}$ and extends the classical characterization for $c_0$, which is recovered as the particular case where $\mathcal I=\fin$ (Proposition~\ref{prop: rel compact c0}). Moreover, under the representation of $c_{0,\mathcal I}$ as a $C_0(L)$-space, these equivalent conditions provide a concrete counterpart of the Arzel\`a--Ascoli theorem adapted to the geometry determined by $\mathcal I$. 

The compactness theory developed in Section~\ref{section compactness} feeds directly into Section~\ref{section sobzcyk}, where we study $c_{0,\mathcal I}$-valued operators. 
We first identify $B(X,c_{0,\mathcal I})$ isometrically with the space of bounded sequences in $X^*$ that are pointwise $\mathcal I$-null, in analogy with the classical description of $c_0$-valued operators. 
Combining Schauder's theorem with the uniform ideal-tail principle, we then obtain a characterization of compact operators $T:X\to c_{0,\mathcal I}$ in terms of the norm behavior and relative norm compactness of their coordinate functionals (Theorem~\ref{thrm:compact-operators}). 
In the second part of the section, we establish an ideal version of Sobczyk's theorem (Theorem~\ref{thm:sobczyk}) by a classical argument. 
Although this result can also be obtained from the theory of separably injective Banach spaces, which has been developed extensively by several authors (see \cite{aviles-et-all} for an excellent exposition of that theory), it is natural to seek a classical proof in the present setting. 
We then show that the constant $2$ in Sobczyk's theorem is optimal and discuss the complementation constant for ideals (Corollary~\ref{cor:no-below-two}), thereby connecting the extension problem with the complementation questions considered in the following sections.

Section~\ref{section complementation} focuses on the complementation of $c_{0,\mathcal I}$ in $\ell_\infty$. 
In the terminology of \cite{rincon-uzcateguiII}, we call a proper ideal $\mathcal I$ \textit{complemented} if $c_{0,\mathcal I}$ is complemented in $\ell_\infty$. 
Recently, \cite{HrusakSaenz} obtained a characterization of complemented ideals in terms of the quotient $\ell_\infty/c_{0,\mathcal I}$, namely, $\mathcal I$ is complemented if and only if $\ell_\infty/c_{0,\mathcal I}$ is isomorphic to a closed subspace of $\ell_\infty$. Using this characterization, we first observe that every complemented ideal is a statistical ideal, that is, the null ideal determined by a finitely additive probability measure that vanishes on $\fin$ (the so-called statistical measures in \cite{kadetsetal2022}). We will show that the converse fails by exhibiting in Corollary~\ref{cor:lacunary-U} an explicit ZFC class of non-complemented statistical ideals with a simpler description than the example given in \cite{HrusakSaenz}.

In Section~\ref{section complemented copies of c0} we address a different complementation problem, namely, when the vector-valued space $c_{0,\mathcal I}(X)$ contains a complemented copy of $c_0$. Extending the classical characterization of complemented copies of $c_0$ in $\ell_\infty(X)$ due to \cite{leung-rabiger}, we obtain in Theorem~\ref{thm-grothendieck} a complete characterization: $c_0$ is complemented in $c_{0,\mathcal I}(X)$ if and only if it is complemented either in $c_{0,\mathcal I}$ or in $X$. The proof makes essential use of finitely additive measures, thus returning to the measure-theoretic viewpoint developed in Section~\ref{section dual of c0I}.

Finally, Section~\ref{section quotiens} studies the quotient $\ell_\infty/c_{0,\mathcal I}$ and shows how its Banach-lattice structure reflects combinatorial properties of $\mathcal I$. 
The first part of the section provides isometric Banach-lattice descriptions of quotients associated with direct and Frol\'ik sums of ideals, including the representation of $\ell_\infty(X)/c_{0,\mathcal I\times\mathcal J}(X)$ obtained in Theorem~\ref{isomorphism quotient of a fubini-product}. 
The second part connects this analytic structure with $\mathcal I$-almost disjoint families: Theorem~\ref{AD-families and KI} shows that, for every infinite cardinal $\kappa$, the quotient $\ell_\infty/c_{0,\mathcal I}$ contains a Banach-lattice copy of $c_0(\kappa)$ if and only if there exists an $\mathcal I$-AD family of cardinality $\kappa$. As a consequence, we show that the almost-disjointness cardinal $\mathfrak{ad}(\mathcal I)$ defined in \cite{rincon-uzcateguiII} coincides with the celullarity of a compact space associated to $\ell_\infty/c_{0,\mathcal I}$.  
}

\section{Preliminaries}
\label{preliminares}

\subsection{Ideals} An ideal $\mathcal I$ on 
 a set $X$ is a collection of subsets of $X$ satisfying:
\begin{enumerate}
    \item $\emptyset\in\mathcal I$;
    \item If $A\subseteq B$ and $B\in\mathcal I$, then $A\in\mathcal I$;
    \item If $A,B\in\mathcal I$, then $A\cup B\in\mathcal I$.
\end{enumerate}

We always assume that every finite subset of $X$ belongs to $\mathcal{I}$. The dual filter of an ideal $\mathcal{I}$ is denoted by $\mathcal{I}^*$ and consists of all sets of the form $X\setminus A$ for some $A\in \mathcal{I}$. The {\em co-ideal} $\mathcal{I}^+$ is the collection $\mathcal{P}(X)\setminus \mathcal{I}$.  

An ideal $\ideal$ is {\em maximal} if $\mathcal P(X)$ is the only ideal properly  extending  $\ideal$; equivalently, if $\ideal^*$ is an ultrafilter. Notice that $\ideal$ is maximal if $\ideal^*=\ideal^+$. For $A\subseteq X$, we denote the restriction of $\ideal$ to $A$ by $\mathcal I\restriction A=\{A\cap B\,\colon B\in\mathcal I\}$ which is an ideal on $A$. Let  $\mathcal A$ and $\mathcal B$  families of sets, we denote by $\mathcal A\sqcup\mathcal B$ the collection $\{A\cup B\,\colon\,A\in\mathcal A,B\in\mathcal B\}.$
An $\mathcal I$-AD family is a collection $\mathcal{A}\subseteq \ideal^+$ such that $A\cap B\in \ideal$ for every two different sets $A, B\in \mathcal{A}$.  Two ideals $\ideal$ and $\idealj$ on $X$ and $Y$, respectively, are {\em isomorphic}, if there is a bijection $f:X\to Y$ such that $f[E]\in \idealj$ for all $E\in \ideal$. 

Let $\{K_n:\; n\in F\}$ be a partition of a countable set $X$, where $F\subseteq \N$. For $n\in F$, let $\ideal_n$ be an ideal on $K_n$. The direct sum, denoted by $\bigoplus\limits_{n\in F} \ideal_n$, is defined as follows:
\[
A\in \bigoplus_{n\in F} \ideal_n \Leftrightarrow (\forall n\in F)(A\cap K_n \in \ideal_n).
\]
Notice that the direct sum $\bigoplus_n \ideal_n$ can also be  naturally  defined in $\N\times\N$.  When $\mathcal I_n$ is isomorphic to $\mathcal I$  for all $n$, the direct sum is denoted by $\mathcal I^\omega$.

If $\mathcal I$ is an ideal on $\mathbb N$ and $(\mathcal I_n)_{n\in\mathbb N}$ is a sequence of ideals on $\mathbb N$, the \textit{Frol\'ik sum}
of $(\mathcal I_n)$ by $\mathcal I$ is defined by 
\begin{equation*}
    \sum_{\mathcal I}\mathcal I_n=\bigg\{A\subseteq\mathbb N\times\mathbb N\,\colon\,\{m\in\N:A_{(m)}\not\in\mathcal I_m\}\in\mathcal I\bigg\},
\end{equation*}
where $A_{(m)}=\{n\in\mathbb N\,\colon\,(m,n)\in A\}$. In particular, when $\mathcal I_m=\mathcal J$ for each $m\in\mathbb N$, the Frol\'ik sum coincides with the \textit{Fubini product} of $\mathcal I$ and $\mathcal J$, denoted by $\mathcal I\times\mathcal J$.

\medskip

Recall that $\beta\omega$ is the Stone-Cech compactification of $\omega$ which  is usually identified with the collection of all ultrafilters on $\omega$. For a set $A\subseteq\omega$, we let $\widehat{A}=\{p\in\beta\omega\,\colon\,A\in p\}$.  The family $\{\widehat{A}\,\colon\,A\subseteq\omega\}$ defines a basis for the topology of $\beta\omega$. As usual, we identify each $n\in \omega$ with the principal ultrafilter $\{A\subseteq\omega: n\in A\}$.  Every principal ultrafilter is an isolated point of $\beta\omega$. We set $U_\mathcal I=\{p\in\beta\omega\,\colon\,\ideal\cap  p\neq \emptyset\}$ and
$K_\mathcal I=\{p\in\beta\omega:\I^*\subseteq p\}=\{p\in\beta\omega:\I\cap p=\emptyset\}$. Since $\displaystyle U_{\mathcal I}=\bigcup\{\widehat{A}\,\colon\,A\in\mathcal I\}$ and $K_\mathcal I=\beta\omega\setminus U_\mathcal I$,  $U_{\mathcal I}$ is open in $\beta\omega$ and $K_\mathcal I$ is compact. Under the previous identification, $\omega\subseteq U_\ideal$ for every ideal $\ideal$. 

  If $\mathcal I$ is an ideal on $\omega$, then  $[A]_{\mathcal I}$ denotes the equivalence class of $A$ in the Boolean algebra
  $\mathcal P(\omega)/\mathcal I$; thus  $[A]_{\mathcal I}=[B]_{\mathcal I}$ precisely when  $A\mathbin\triangle B\in\mathcal I$. By $\Stone(\mathbb B)$ we denote the Stone space of the Boolean algebra $\mathbb B$. 

\subsection{Banach spaces} We will use standard terminology and notation for Banach lattices and Banach space theory. For unexplained definitions and notations, we refer to \cite{AK,schaeffer}. The scalar field is denoted by $\mathbb K$. All Banach lattices analyzed here are assumed to be real. However, our results can be extended to complex Banach lattices in the usual manner \cite[Chapter 2, p. 133]{schaeffer}. A subset $X$ of a Banach lattice $E$ is \textit{solid} if for all $x\in E$ and $y\in X$ satisfy $|x|\leq |y|$, we have $x\in X$. An ideal of $E$ is a closed solid sublattice. As shown in \cite[Proposition 4.1]{rincon-uzcategui}, every ideal of $\ell_\infty$ containing $c_0$ has the form $c_{0,\mathcal I}$ for some ideal $\I$. 

If $X$ and $Y$ are isomorphic (isometric) Banach spaces, we write $X\sim Y$ ($X\equiv Y$, respectively). We also write $X\equiv_{\leq}Y$ whenever $X$ and $Y$ are Banach lattice isometric. If $E$ is a closed subspace of a Banach space $X$, we say that 
$E$ is complemented in $X$ if there is a continuous onto operator $P\colon X\to E$ such that $P^2=P$, or equivalently, there is a closed subspace $W$ of $X$ such that $X=E\oplus W$. In addition, if $E$ and $X$ are Banach lattices, $P$ is called \textit{positive} if $Px\geq{\bf0}$ for all $x\geq{\bf0}$.  We write $X\xhookrightarrow{c} Y$ whenever $Y$ contains a complemented subspace isomorphic to $X$.

Now, we introduce some notation. Let $(X_j)_{j\in\mathbb N}$ be a family of Banach spaces.
\begin{enumerate}
    \item $\ell_\infty((X_j)_{j\in\mathbb N})$ denotes the $\ell_\infty$-sum of $(X_j)_{j\in\mathbb N}$, that is, the Banach space of all bounded sequences $\displaystyle(x_j)\in\prod_jX_j$  endowed with the norm $\|\cdot\|$
given by $\displaystyle\|(x_j)\|=\sup_j\|x_j\|$. 

\item $c_0((X_j)_{j\in\mathbb N})$ is the $c_0$-sum of $(X_j)_{j\in\mathbb N}$, that is, the closed subspace of $\ell_\infty((X_j)_{j\in\mathbb N})$ consisting of all null sequences, i.e, $\displaystyle\lim_j\|x_j\|=0$.
\end{enumerate}

When $X=X_j$ for all $j\in\mathbb N$, these spaces are denoted by  
$\ell_\infty(X)$ and $c_0(X)$, respectively. Finally, for $X=\mathbb K$, $\ell_\infty(X)$ and $c_0(X)$ correspond to $\ell_\infty$ and $c_0$, respectively.

\begin{remark}\label{isomorphisms of l_oo(c_0)}
    It is easy to see that if each $X_j$ is isometric (isomorphic) to $X$, then
$\ell_\infty((X_j)_{j\in\mathbb N})$ ($c_0((X_j)_{j\in\mathbb N})$) is isometric (isomorphic, respectively) to $\ell_\infty(X)$ ($c_0(X)$, respectively).
\end{remark}

 The following results gives a representation of $c_{0,\mathcal I}$ and $\ell_\infty/c_{0,\mathcal I}$ as spaces of continuous functions. 

 \begin{proposition}(\cite[Proposition 3.1]{rincon-uzcategui} and \cite{kania}, respectively)\label{isometrias}
     Let $\mathcal I$ be an ideal on $\mathbb N$.
     \begin{enumerate}
         \item $c_{0,\mathcal I}$ is Banach isometric to $C_0(U_\mathcal I)$.
         \item $\ell_\infty/c_{0,\mathcal I}$ is Banach isometric to $C(K_\mathcal I)$.
     \end{enumerate}
 \end{proposition}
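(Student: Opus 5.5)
For (1), I will identify each bounded sequence ${\bf x}=(x_n)\in\ell_\infty$ with its continuous extension $\beta{\bf x}\colon\beta\omega\to\mathbb K$. The map ${\bf x}\mapsto\beta{\bf x}$ is the classical isometric isomorphism $\ell_\infty\equiv C(\beta\omega)$. Since $\omega$ is dense in $\beta\omega$, it is a Banach lattice isometry. The whole task is to show that this map carries $c_{0,\mathcal I}$ onto the copy of $C_0(U_\mathcal I)$ inside $C(\beta\omega)$. That copy consists of the functions vanishing on $K_\mathcal I=\beta\omega\setminus U_\mathcal I$. Restriction to the open set $U_\mathcal I$ is then an isometry onto $C_0(U_\mathcal I)$, because $U_\mathcal I$ is open in the compact space $\beta\omega$. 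Its one-point compactification is therefore the quotient $\beta\omega/K_\mathcal I$, and functions on it vanishing at infinity are exactly the functions on $\beta\omega$ vanishing on $K_\mathcal I$.

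The key step is the equivalence: ${\bf x}\in c_{0,\mathcal I}$ if and only if $\beta{\bf x}(p)=0$ for every $p\in K_\mathcal I$. Recall that $\beta{\bf x}(p)=p\text{-}\lim x_n$.

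For the forward direction, take ${\bf x}\in c_{0,\mathcal I}$ and $p\in K_\mathcal I$. For each $\varepsilon>0$ the set $\{n:|x_n|\ge\varepsilon\}$ lies in $\mathcal I$. Since $p\cap\mathcal I=\emptyset$, its complement $\{n:|x_n|<\varepsilon\}$ belongs to $p$, so $|p\text{-}\lim x_n|\le\varepsilon$.

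For the converse, suppose some $A_\varepsilon=\{n:|x_n|\ge\varepsilon\}$ is not in $\mathcal I$. Then $\{A_\varepsilon\}\cup\mathcal I^*$ has the finite intersection property, so it extends to an ultrafilter $p$. This $p$ lies in $K_\mathcal I$ and contains $A_\varepsilon$, hence $|\beta{\bf x}(p)|\ge\varepsilon$, a contradiction. Surjectivity onto the functions vanishing on $K_\mathcal I$ is immediate, since every $f\in C(\beta\omega)$ equals $\beta{\bf x}$ with ${\bf x}=f\restriction\omega$.

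For (2), I will use the same identification. Under it, $c_{0,\mathcal I}$ becomes $\{f\in C(\beta\omega): f\restriction K_\mathcal I=0\}$. Since $K_\mathcal I$ is closed, the restriction map $R\colon C(\beta\omega)\to C(K_\mathcal I)$ is onto by Tietze, and its kernel is exactly $c_{0,\mathcal I}$. So $R$ induces a bijection $\ell_\infty/c_{0,\mathcal I}\to C(K_\mathcal I)$.

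The main obstacle is showing this induced map is an isometry. The inequality $\|Rf\|\le\operatorname{dist}(f,\ker R)$ is clear. For the reverse inequality I will use the Tietze extension in norm-preserving form: given $g=f\restriction K_\mathcal I$, extend $g$ to $h\in C(\beta\omega)$ with $\|h\|=\|g\|$, for instance by extending and then truncating radially. Then $f-h\in\ker R$ and $\|f-(f-h)\|=\|h\|=\|g\|$, so the quotient norm of $f$ is at most $\|Rf\|$. This gives the isometry, and $R$ is moreover a lattice homomorphism.
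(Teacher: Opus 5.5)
Your proof is correct, and it is essentially the standard argument behind the results the paper cites without proof: extend sequences to $\beta\omega$, identify $c_{0,\mathcal I}$ with the functions vanishing on $K_{\mathcal I}$, restrict to $U_{\mathcal I}$ for (1), and pass to the quotient via restriction to $K_{\mathcal I}$ together with a norm-preserving Tietze extension for (2). This is consistent with the paper's later use of the identification, under which $\chi_A$ corresponds to $\widehat A$. One small nit: calling $\beta\omega/K_{\mathcal I}$ the one-point compactification of $U_{\mathcal I}$ fails when $K_{\mathcal I}=\emptyset$ (that is, when $\mathcal I=\mathcal P(\omega)$); this is harmless, because extension by zero from the open set $U_{\mathcal I}$ gives the needed correspondence in every case.
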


\begin{remark}\cite[Proposition 2.5]{rincon-uzcategui}\label{denseness}
   If $c_{00,\mathcal I} \coloneqq \spn \{\chi_A\,\colon\,A\in\mathcal I\} \subseteq \ell_\infty$, then  $\overline{c_{00,\mathcal I}}^{\|\cdot\|_\infty}=c_{0,\mathcal I}$.
\end{remark}

\section{The dual of \texorpdfstring{$c_{0,\mathcal I}$}{c0I}}\label{section dual of c0I}

In this section, we study the dual spaces associated with $\mathcal I$-null sequences through finitely additive measures on $\mathcal I$. We first identify $c_{0,\mathcal I}^*$ isometrically with a space of scalar-valued measures and then obtain a related isometric embedding in the vector-valued setting.

To treat the scalar and vector-valued settings within the same framework, we present the following:

\begin{definition}
let
$E$ be a normed space over $\mathbb K$. We denote by
$\ba(\mathcal I,E)$ the space of all finitely additive maps
$m\colon\mathcal I\to E$ of bounded variation; that is,
\[
\ba(\mathcal I,E)
\doteq
\left\{
m\colon\mathcal I\to E\,\colon\, m(A\cup B)=m(A)+m(B)\text{ whenever }A\cap B=\emptyset,\,\, \|m\|<\infty \right\},
\]
where $\|m\|$ is the total variation norm, defined by:
\[
\|m\| \doteq
\sup\left\{
\sum_{j=1}^n\|m(A_j)\|\,\colon\,
n\in\mathbb N \text{ and } A_1,\ldots,A_n\in\mathcal I \text{ are pairwise disjoint}
\right\}.
\]
When $E=\mathbb K$, we simply write $\ba(\mathcal I)\doteq\ba(\mathcal I,\mathbb K)$ and set:
\[
\ba_0(\mathcal I)
\doteq
\{m\in\ba(\mathcal I)\,\colon\,m(\{n\})=0\text{ for every }n\in\mathbb N\}.
\]
\end{definition}

The reader may notice that the space $\big(\ba(\mathcal I,E), \|\,\cdot\,\|\big)$ is a Banach space whenever $E$ is Banach.

\begin{theorem}\label{thm:dual-ba}
Let $\I$ be an ideal on $\omega$. The map
\begin{align*}
\mathfrak T\colon c_{0,\mathcal I}^*&\longrightarrow\ba(\mathcal I),\\
\Phi&\longmapsto m_\Phi
\end{align*}
where $m_\Phi(A)\doteq\Phi(\chi_A)$ for all $A\in\mathcal I$ is a surjective isometry.
\end{theorem}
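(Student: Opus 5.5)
The plan follows the classical proof that $\ell_\infty^*\equiv\ba(\mathcal P(\mathbb N))$, with $c_{00,\mathcal I}$ in place of the simple functions and Remark~\ref{denseness} supplying density.

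\textbf{Well-definedness and $\|m_\Phi\|\le\|\Phi\|$.} Given $\Phi\in c_{0,\mathcal I}^*$, each $\chi_A$ with $A\in\mathcal I$ lies in $c_{0,\mathcal I}$. Linearity of $\Phi$ together with $\chi_{A\cup B}=\chi_A+\chi_B$ for disjoint $A,B$ makes $m_\Phi$ finitely additive. For the variation bound, take pairwise disjoint $A_1,\dots,A_n\in\mathcal I$ and choose unimodular scalars $\theta_j$ with $\theta_j\Phi(\chi_{A_j})=|\Phi(\chi_{A_j})|$. Then $x=\sum_j\theta_j\chi_{A_j}$ satisfies $\|x\|_\infty\le1$ and $\sum_j|m_\Phi(A_j)|=\Phi(x)\le\|\Phi\|$. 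Hence $m_\Phi\in\ba(\mathcal I)$ and $\|m_\Phi\|\le\|\Phi\|$, and $\mathfrak T$ is clearly linear.

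\textbf{Isometry.} For the reverse inequality, every $x\in c_{00,\mathcal I}$ can be written as $x=\sum_j a_j\chi_{A_j}$ with the $A_j\in\mathcal I$ pairwise disjoint. This follows by refining to the atoms of the finite Boolean algebra generated by finitely many sets of $\mathcal I$; all atoms lie in $\mathcal I$ by heredity. Then
\[
|\Phi(x)|=\Bigl|\sum_j a_j m_\Phi(A_j)\Bigr|\le\|x\|_\infty\sum_j|m_\Phi(A_j)|\le\|x\|_\infty\|m_\Phi\|.
\]
Since $c_{00,\mathcal I}$ is dense in $c_{0,\mathcal I}$ by Remark~\ref{denseness}, we get $\|\Phi\|\le\|m_\Phi\|$. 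This also gives injectivity.

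\textbf{Surjectivity.} Given $m\in\ba(\mathcal I)$, define $\Phi_m(\sum_j a_j\chi_{A_j})=\sum_j a_j m(A_j)$ on $c_{00,\mathcal I}$. The main obstacle, though routine, is showing this is well defined, i.e.\ independent of the representation. I would handle it by passing to a common disjoint refinement of two representations and using finite additivity of $m$ on the refining atoms, all of which lie in $\mathcal I$. The same computation as above shows $|\Phi_m(x)|\le\|m\|\,\|x\|_\infty$. Hence $\Phi_m$ is a bounded linear functional on the dense subspace $c_{00,\mathcal I}$ and extends uniquely to $c_{0,\mathcal I}$. By construction $m_{\Phi_m}(A)=\Phi_m(\chi_A)=m(A)$, so $\mathfrak T$ is onto.
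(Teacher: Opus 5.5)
Your proposal is correct and follows essentially the same route as the paper: the sign trick for $\|m_\Phi\|\le\|\Phi\|$, disjoint representations of elements of $c_{00,\I}$ plus density (Remark~\ref{denseness}) for the reverse inequality, and bounded extension from $c_{00,\I}$ for surjectivity. Your direct estimate $|\Phi(x)|\le\|x\|_\infty\|m_\Phi\|$ on $c_{00,\I}$ is a slightly cleaner replacement for the paper's $\varepsilon$-approximation with coefficients $|b_i|\le 1+\varepsilon$. You also address the well-definedness of $\Phi_m$ explicitly, which the paper leaves implicit. One small correction: the atom $\omega\setminus\bigcup_j A_j$ need not lie in $\I$, but $x$ vanishes there, so only the atoms contained in $\bigcup_j A_j$ are used, and those do lie in $\I$.
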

\begin{proof}
Let $\Phi\in c_{0,\mathcal I}^*$ be fixed and consider the map $m_\Phi\colon\mathcal I\to\mathbb R$ defined above. Clearly, $m_{\Phi}(A\cup B)=m_{\Phi}(A)+m_{\Phi}(B)$ whenever $A,B\in\mathcal I$ are disjoint.

On the other hand, let $n\in\mathbb N$. If $A_1,\ldots,A_n\in\mathcal I$ are given and pairwise disjoint, by writing $a_j=\mathrm{sgn}(m_{\Phi}(A_j))$ for $j=1,\ldots,n$
we have
\begin{gather*}
    \sum_{j=1}^n|m_{\Phi}(A_j)|=\sum_{j=1}^na_jm_{\Phi}(A_j)=\sum_{j=1}^na_j\Phi(\chi_{A_j})=\Phi\left(\sum_{j=1}^na_j\chi_{A_j}\right)\leq\|\Phi\|.
\end{gather*}
Whence, $\|m_{\Phi}\|\leq\|\Phi\|$. Now we will prove the reverse inequality. Let $\varepsilon>0$ be given. Then there is ${\bf x}\in B_{c_{0,\mathcal I}}$ such that
$\|\Phi\|-\varepsilon<|\Phi({\bf x})|$. By Remark \ref{denseness}, there exist $A_1,\ldots,A_n\in\mathcal I$ and $a_1,\ldots,a_n\in\mathbb K$ such that $\|{\bf x-y}\|<\varepsilon$, where ${\bf y}=\sum_{j=1}^na_j\chi_{A_j}$. By a standard argument, there are $B_1,\ldots,B_m\in\mathcal I$ disjoint and $b_1,\ldots,b_m\in\mathbb K$
with $|b_i|\leq1+\varepsilon$ for all $i=1,\ldots,m$ satisfying ${\bf y}=\sum_{i=1}^mb_i\chi_{B_i}$. Therefore, 
\begin{align*}
    \|\Phi\|-\varepsilon<|\Phi({\bf x})|
    &\leq\varepsilon\|\Phi\|+|\Phi({\bf y})|\\
    &\leq\varepsilon\|\Phi\|+\left|\Phi\left(\sum_{i=1}^mb_i\chi_{B_i}\right)\right|\\
    &\leq\varepsilon\|\Phi\|+\sum_{i=1}^m|b_i\Phi(\chi_{B_i})|\\
    &\leq\varepsilon\|\Phi\|+(1+\varepsilon)\sum_{i=1}^m|\Phi(\chi_{B_i})|\\
    &\leq\varepsilon\|\Phi\|+(1+\varepsilon)\|m_{\Phi}\|.
\end{align*}
Since $\varepsilon>0$ was arbitrary, we conclude that $\|\Phi\|\leq\|m_{\Phi}\|$. Thus, $\mathfrak T$ is an isometry.

    It remains to prove that $\mathfrak T$ is onto. Let $m\in ba(\mathcal I)$ be given. Define $\Phi\colon c_{00,\mathcal I}\to\mathbb R$ by
    $\Phi(\sum_{i=1}^kc_i\chi_{A_i})=\sum_{i=1}^kc_im(A_i)$ if $A_1,\ldots,A_k$ are pairwise disjoint. If ${\bf x}=\sum_{j=1}^na_j\chi_{A_j}\in c_{00,\mathcal I}$ has norm less than 1, then 
    \begin{gather*}
        |\Phi({\bf x})|=\left|\sum_{j=1}^na_jm(A_j)\right|\leq \sum_{j=1}^n|a_j||m(A_j)|\leq\|m\|.
    \end{gather*}
    Hence, $\Phi$ is continuous. By following the argument used at the beginning of the proof, we obtain $\|\Phi\|=\|m\|$. So, $\Phi$ admits a unique norm-preserving extension to $c_{0,\mathcal I}$. Clearly, $\mathfrak T(\Phi)=m_{\Phi}=m$.
\end{proof}

Given $(x_n)_{n\in \N} \in\ell_\infty(X)$ and $A\subset \N$, we define a new element $\chi_A\cdot(x_n)_n \in \ell_\infty(X)$, whose coordinates are given by:
\[
\pi_i\Big(\chi_A\cdot(x_n)_n\Big) =  \begin{cases}
    x_i,& \text{if } i\in A\\ 
    0,& \text{if }i\notin A
\end{cases}
\]
\begin{theorem}
    Let $X$ be a Banach space and $\mathcal I$ an ideal. The operator 
    \begin{align*}
        S: c_{0,\mathcal I}(X)^*&\longrightarrow \ba(\mathcal I, \ell_\infty(X)^*),\\
        \varphi&\longmapsto m_\varphi
    \end{align*}
    where $m_\varphi(A)\Big((x_n)_n\Big) \doteq \varphi \Big(\chi_A\cdot (x_n)\Big)$ for all $A\in \mathcal I,(x_n) \in \ell_\infty(X)$, is a (non-surjective) isometry.
\end{theorem}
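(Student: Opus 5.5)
The plan is to verify directly that $S$ is well defined, linear and norm-preserving, and then to exhibit an explicit element of $\ba(\mathcal I,\ell_\infty(X)^*)$ outside the range of $S$. Throughout I assume $X\neq\{0\}$; otherwise both spaces are trivial and there is nothing to prove.

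\emph{Well-definedness and linearity.} For $A\in\mathcal I$ and $(x_n)\in\ell_\infty(X)$, the sequence $\chi_A\cdot(x_n)$ is bounded by $\|(x_n)\|$ and vanishes outside $A\in\mathcal I$. Hence it is $\mathcal I$-null, so it lies in $c_{0,\mathcal I}(X)$. Moreover $(x_n)\mapsto \chi_A\cdot(x_n)$ is a linear contraction from $\ell_\infty(X)$ into $c_{0,\mathcal I}(X)$. Therefore $m_\varphi(A)\in\ell_\infty(X)^*$ with $\|m_\varphi(A)\|\le\|\varphi\|$. Finite additivity follows from $\chi_{A\cup B}\cdot x=\chi_A\cdot x+\chi_B\cdot x$ for disjoint $A,B$, and linearity of $\varphi\mapsto m_\varphi$ is clear.

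\emph{The bound $\|m_\varphi\|\le\|\varphi\|$.} Fix pairwise disjoint $A_1,\dots,A_n\in\mathcal I$ and $\varepsilon>0$. For each $j$, choose $x^j$ in the unit ball of $\ell_\infty(X)$ with $m_\varphi(A_j)(x^j)$ real and at least $\|m_\varphi(A_j)\|-\varepsilon/n$; a unimodular scalar is absorbed into $x^j$ if necessary. Set $z=\sum_j\chi_{A_j}\cdot x^j$. By disjointness of the $A_j$, $z\in c_{0,\mathcal I}(X)$ and $\|z\|\le1$. Also $\varphi(z)=\sum_j m_\varphi(A_j)(x^j)\ge \sum_j\|m_\varphi(A_j)\|-\varepsilon$. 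Taking suprema gives $\|m_\varphi\|\le\|\varphi\|$.

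\emph{The bound $\|\varphi\|\le\|m_\varphi\|$.} This step replaces the density argument of Theorem~\ref{thm:dual-ba} and is in fact simpler. Let $y=(y_n)$ be in the unit ball of $c_{0,\mathcal I}(X)$ and fix $\varepsilon>0$. Put $A=\{n:\|y_n\|\ge\varepsilon\}$, so $A\in\mathcal I$ by definition of $\mathcal I$-convergence. Then $\|y-\chi_A\cdot y\|\le\varepsilon$, since the remaining coordinates all have norm less than $\varepsilon$. Consequently
\[
|\varphi(y)|\le\varepsilon\|\varphi\|+|m_\varphi(A)(y)|\le\varepsilon\|\varphi\|+\|m_\varphi(A)\|\le \varepsilon\|\varphi\|+\|m_\varphi\|,
\]
using that a single set is an admissible partition in the definition of $\|m_\varphi\|$. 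Letting $\varepsilon\to0$ shows that $S$ is an isometry.

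\emph{Non-surjectivity.} This is the only point needing an idea. Every $m_\varphi$ has a locality property: $m_\varphi(A)(x)$ depends only on the coordinates $x_n$ with $n\in A$, because $\chi_A\cdot x=\chi_A\cdot x'$ whenever $x$ and $x'$ agree on $A$. I would build an $m$ violating this. Pick $x^*\in X^*$ with $\|x^*\|=1$ and define $f\in\ell_\infty(X)^*$ by $f((x_n))=x^*(x_1)$. Set $m(A)=f$ if $2\in A$ and $m(A)=0$ otherwise. This $m$ is finitely additive and has variation $\|m\|=\|f\|=1$, so $m\in\ba(\mathcal I,\ell_\infty(X)^*)$. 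Suppose $m=m_\varphi$. Take $x=(x_n)$ with $x_2=0$ and $x^*(x_1)=1$. Then $\chi_{\{2\}}\cdot x=0$, so $m_\varphi(\{2\})(x)=0$. On the other hand, $m(\{2\})(x)=1$, a contradiction. Hence $m$ is not in the range of $S$.
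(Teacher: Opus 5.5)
Your proof is correct. Two of its three parts coincide with the paper. The bound $\|m_\varphi\|\le\|\varphi\|$ is obtained the same way: you glue near-norming vectors on the disjoint sets $A_j$ into one element of the unit ball of $c_{0,\mathcal I}(X)$. Your non-surjectivity example is the same locality violation the paper gives in the remark after the theorem: a measure concentrated at one index whose value reads a different coordinate.

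The real difference is the reverse inequality $\|\varphi\|\le\|m_\varphi\|$. The paper mimics the scalar proof of Theorem~\ref{thm:dual-ba}. It uses the density of $\spn\{\chi_A\cdot\mathbf y: A\in\mathcal I,\ \mathbf y\in\ell_\infty(X)\}$ in $c_{0,\mathcal I}(X)$ from \cite{rincon-uzcategui}. It then rewrites the approximant over pairwise disjoint sets with a $(1+\varepsilon)$ loss, and finally invokes the full variation $\sum_j\|m_\varphi(B_j)\|$. You notice that $m_\varphi(A)$ is a functional on all of $\ell_\infty(X)$ rather than a number. Hence the single truncation $y\approx\chi_A\cdot y$, with $A=\{n:\|y_n\|\ge\varepsilon\}\in\mathcal I$, suffices.

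This makes your argument self-contained, with no density lemma and no disjointification. It also yields the sharper identity $\|\varphi\|=\sup_{A\in\mathcal I}\|m_\varphi(A)\|=\|m_\varphi\|$. The paper's route buys only uniformity with the scalar case, where $m_\Phi(A)=\Phi(\chi_A)$ sees nothing but indicators and density of simple sequences is unavoidable.

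Your counterexample also covers every $X\neq\{0\}$, whereas the paper's remark treats only $X=\mathbb K$. One small correction: for $X=\{0\}$ the map $S$ is actually onto. So the parenthetical non-surjectivity claim genuinely needs $X\neq\{0\}$; it is not a case where there is nothing to prove.
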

\begin{proof}
    Let $\varphi \in c_{0,\mathcal I}(X)^*$. We will argue that $\|m_\varphi\| \leq \|\varphi\|$:

    Let $A_1,\ldots,A_n\in\mathcal I$ disjoint sets and $z_j \in B_{\ell_\infty(X)}$, $j=1,\ldots,n$, we have:
    \begin{align*}
        \sum_{j=1}^n |m_\varphi(A_j)(z_j)| &=
            \sum_{j=1}^n  \alpha_j \varphi(\chi_{A_j}\cdot z_j)\\
            &= \varphi\Big( \sum_{j=1}^n \underbrace{\alpha_j \chi_{A_j}\cdot z_j}_{\in B_{c_{0,\mathcal I}(X)}}\Big)\\
            &\leq \|\varphi\|   
    \end{align*} 

Taking the supremum over $z_j \in B_{\ell_\infty(X)}$ for $1\leq j\leq n$, yields:
\[
\sum_{j=1}^n \|m_\varphi(A_j)\| \leq \|\varphi\|,
\]
and taking the supremum over finite unions of disjoint sets in $\mathcal I$ yields $\|m_\varphi\|\leq \|\varphi\|$.

On the other hand, to show that $\|\varphi\|\leq\|m_\varphi\|$ let $x\in B_{c_{0,\mathcal I}(X)}$ and fix $\varepsilon>0$. Since $c_{0,\mathcal I}(X) = \overline{\spn}^{\|\cdot\|_\infty}\{\chi_A\cdot{\bf y}: A\in \mathcal I, {\bf y} \in \ell_\infty(X) \}$ (see \cite[Proposition 2.5]{rincon-uzcategui}), choose $z=\sum\limits_{j=1}^m \chi_{A_i}\cdot y_i$ such that $\| x-z\|<\varepsilon$ and observe that:
\begin{align*}
\left| \varphi \left( \sum_{j=1}^m \chi_{A_j}\cdot y_j\right)\right| &=
    \left| \varphi \left( \sum_{j=1}^m \chi_{B_j}\cdot \widehat{y_j}\frac{(1+\varepsilon)}{(1+\varepsilon)}\right )\right|\\
    &= (1+\varepsilon) \left| \varphi \left( \sum_{j=1}^{l_m} \chi_{B_j}\cdot \frac{\widehat{y_j}}{(1+\varepsilon)}\right)\right|\\
    &\leq (1+\varepsilon)\sum_{j=1}^{l_m}\Big|\varphi \Big(\underbrace{\chi_{B_j}\cdot \frac{\widehat{y_j}}{(1+\varepsilon)}}_{m_\varphi(B_j)\Big( \frac{\widehat{y_j}}{(1+\varepsilon)}\Big)}\Big)\Big| \\
    &\leq (1+\varepsilon) \sum_{j=1}^{l_m}\| m_\varphi(B_j)\|\\
    &\leq (1+\varepsilon)\|m_\varphi\|,
\end{align*}
where the sets $B_j\in \mathcal I$ are disjoint and each $\|\widehat{y_j}\|\leq 1+\varepsilon$.

Notice that we have:
\begin{align*}
|\varphi(x)| &= |\varphi(x-z)| + |\varphi(z)| \\
    &\leq \|\varphi\|\varepsilon + (1+\varepsilon)\|m_\varphi\|.
\end{align*}

Taking the limit with $\varepsilon\to 0$ and the supremum over $B_{c_{0,\mathcal I}}$ yields $\|\varphi\| \leq \|m_\varphi\|$.
\end{proof}

\begin{remark}
The operator $S$ is not onto in general. Indeed, the reader may notice that the the range of $S$ is exactly:
\begin{equation}\label{eq:locality-vector-measures}
\ran(S) = \{m\in  \ba(\mathcal I, \ell_\infty(X)^*): m(A)({\bf x})=m(A)(\chi_A\cdot{\bf x}),
\, A\in\mathcal I,\, {\bf x}\in\ell_\infty(X) \}
\end{equation}


For $X=\mathbb K$, fix distinct $p,q\in\mathbb N$, let $\delta_q((x_n))=x_q$, and define 
$m(A)=
\begin{cases}
\delta_q,&p\in A,\\
0,&p\notin A
\end{cases}
$.

Then $m\in\ba(\mathcal I,\ell_\infty^*)$ and $\|m\|=1$, but
\[
m(\{p\})(\chi_{\{q\}})=1
\qquad\text{and}\qquad
m(\{p\})(\chi_{\{p\}}\cdot\chi_{\{q\}})=0.
\]
Thus $m$ does not belong to the range of $S$.
\end{remark}

Having established these measure-theoretic descriptions, we now turn to two
classical sequence-space approaches to duality. We first consider
$\ell_{1,\mathcal I}$, defined by the $\mathcal I$-convergence of the partial
sums of $\sum_n |x_n|$, and then determine the Köthe--Toeplitz duals of
$c_{0,\mathcal I}$.

Given an ideal $\mathcal I$, consider:
\[\ell_{1,\mathcal I}
\doteq
\left\{
{\bf x}=(x_n)\in\mathbb K^{\mathbb N}\,\colon\, 
\sum_{j=1}^\infty|x_j| \text{ is $\mathcal I$-convergent}
\right\},
\]
endowed with the norm:
\[
\|{\bf x}\|_{\mathcal I}
\doteq
\mathcal I\text{-}\lim_n\sum_{j=1}^n|x_j|.
\]

The reader may notice that the operator $J_{\mathcal I}\colon\ell_{1,\mathcal I}\longrightarrow c_{0,\mathcal I}^*$ defined by:
\[
J_{\mathcal I}({\bf y})({\bf x}) \doteq 
\sum_{n=1}^\infty x_ny_n =
\mathcal I\text{-}\lim_N\sum_{n=1}^N x_ny_n,
\qquad \forall {\bf y}=(y_n)\in\ell_{1,\mathcal I},\,
{\bf x}=(x_n)\in c_{0,\mathcal I}.
\]
is well defined (see \cite[Lemma 3.2]{balcerzak}) and is a linear isometric embedding. On the other hand, the next theorem shows that this approach cannot yield any characterization of $c_{0,\mathcal I}$:

\begin{proposition}
If $\mathcal I$ is a proper ideal, then $\ell_{1,\mathcal I} = \ell_1$.
\end{proposition}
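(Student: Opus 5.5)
The plan is to show both inclusions directly. The key observation is that the partial sums $S_n=\sum_{j=1}^n|x_j|$ form a nondecreasing sequence in $[0,\infty]$, and for monotone sequences $\mathcal I$-convergence forces ordinary convergence whenever $\mathcal I$ is proper.

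For $\ell_1\subseteq\ell_{1,\mathcal I}$, take ${\bf x}\in\ell_1$. Then $S_n\to\|{\bf x}\|_1$ in the usual sense. Hence for every $\varepsilon>0$ the set $\{n:|S_n-\|{\bf x}\|_1|\ge\varepsilon\}$ is finite, so it lies in $\fin\subseteq\mathcal I$. Thus $\mathcal I$-$\lim S_n=\|{\bf x}\|_1$. This also shows $\|{\bf x}\|_{\mathcal I}=\|{\bf x}\|_1$, although only the equality of sets is claimed.

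For the reverse inclusion, let ${\bf x}\in\ell_{1,\mathcal I}$ with $\mathcal I$-$\lim S_n=L\in\mathbb R$. I argue in two steps.

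\textbf{Step 1: $S_n\le L$ for all $n$.} Suppose instead that $S_{n_0}>L$ for some $n_0$, and put $\varepsilon=S_{n_0}-L>0$. By monotonicity, $S_n-L\ge\varepsilon$ for all $n\ge n_0$. So $\{n:|S_n-L|\ge\varepsilon\}$ contains the cofinite set $[n_0,\infty)$. Since this set is in $\mathcal I$ and $\fin\subseteq\mathcal I$, we would get $\mathbb N\in\mathcal I$, contradicting properness.

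\textbf{Step 2: conclude ${\bf x}\in\ell_1$.} By Step 1 the partial sums are bounded, so $\sum_j|x_j|<\infty$ and ${\bf x}\in\ell_1$. The same argument run from below also identifies $L=\sup_nS_n$. Indeed, if $\sup_n S_n<L$, then $|S_n-L|\ge L-\sup_n S_n>0$ for every $n$, which again puts $\mathbb N$ in $\mathcal I$. This is not needed for the set equality, but it confirms that $\|\cdot\|_{\mathcal I}=\|\cdot\|_1$.

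The only delicate point is where properness enters: a cofinite set in $\mathcal I$ forces $\mathbb N\in\mathcal I$, because $\fin\subseteq\mathcal I$ and $\mathcal I$ is closed under finite unions. One should also note that $\mathcal I$-convergence here means convergence to a finite real limit, which rules out $S_n\to\infty$. Beyond that I expect no real obstacle.
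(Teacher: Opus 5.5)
Your proof is correct and follows essentially the same route as the paper: monotonicity of the partial sums puts a whole tail $[n_0,\infty)$ into the exceptional set $\{n:|S_n-L|\ge\varepsilon\}$, which contradicts properness of $\mathcal I$. The differences are cosmetic: you bound $S_n\le L$ directly where the paper argues by contradiction from unboundedness, and you identify $L=\sup_n S_n$ by hand where the paper invokes uniqueness of $\mathcal I$-limits.
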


\begin{proof}
The inclusion $\ell_1\subseteq\ell_{1,\mathcal I}$ is immediate, since
ordinary convergence implies $\mathcal I$-convergence. Conversely, let
${\bf x}\in\ell_{1,\mathcal I}$ and denote:
\[
S_n=\sum_{j=1}^n|x_j|
\qquad\text{and}\qquad
\alpha=\mathcal I\text{-}\lim_n S_n.
\]
If $(S_n)$ were unbounded, then, since it is increasing, there would be $n_0\in\mathbb N$ such that $S_n>\alpha+1$ for every $n\geq n_0$. Hence
\[
[n_0,+\infty)\cap\N
\subseteq
\{n\in\mathbb N\,\colon\,|S_n-\alpha|\geq1\} \in\mathcal I,
\]
which contradicts our assumption that $\I$ is proper. Thus $(S_n)$ is
bounded and increasing, so it converges in the ordinary sense. By uniqueness of ideal limits, it follows that:
\[
\|{\bf x}\|_1=\lim_nS_n=\alpha=\|{\bf x}\|_{\mathcal I}.
\]

Hence, $\ell_{1,\I} = \ell_1$.
\end{proof}

Although $\ell_{1,\mathcal I}$ introduces no new sequence space, its definition recovers the canonical copy of $\ell_1$ in $c_{0,\mathcal I}^*$. Theorem~\ref{thm:dual-splitting} will show that $J_{\mathcal I}(\ell_1)$ is, in fact, the range of a projection of norm one on $c_{0,\mathcal I}^*$.

We next turn to another classical notion of duality for sequence spaces, namely the Köthe--Toeplitz duals. For a sequence space $E$, its associated $\alpha$-, $\beta$-, and $\gamma$-duals are defined by:
\begin{align*}
E^{\alpha} &=
\left\{ x\in\mathbb K^{\N}: 
\sum_{i=1}^{\infty}|x_i y_i|<\infty \text{ for every }y\in E
\right\},\\
E^{\beta} &=
\left\{ x\in\mathbb K^{\N}:
\sum_{i=1}^{\infty}x_i y_i \text{ converges for every }y\in E
\right\},\\
E^{\gamma} &=
\left\{ x\in\mathbb K^{\N}:
\sup_{n\in\N} \left|\sum_{i=1}^{n}x_i y_i\right|<\infty \text{ for every }y\in E
\right\}.
\end{align*}

It follows directly from the definitions that $c_{00}\subseteq E^\alpha\subseteq E^\beta\subseteq E^\gamma$. Moreover, since $c_{0,\mathcal I}\subseteq\ell_\infty$, we have $\ell_1\subseteq(c_{0,\mathcal I})^\alpha$. The Köthe--Toeplitz duals are particular cases of the multiplier sequence spaces $M(E,F) = \left\{ u\in\mathbb K^\N: (u_i x_i)_i\in F\text{ for every }x\in E \right\}$, where $X,Y$ are sequence spaces (see \cite[4.3, Theorem 15]{WilanskySummability}). For a historical account of these notions, we refer to \cite{GarlingTopologicalsequence}; a more detailed introduction can be found in \cite[Section 30.1]{kothetopvec1}.

In \cite{Garlingbetagamadual}, the autor determines the Köthe--Toeplitz duals of several sequence spaces introduced in \cite{GarlingTopologicalsequence}. Interestingly, all these spaces have the same $\alpha$-dual, namely $\ell_1$. This suggests investigating whether the same phenomenon occurs for the spaces $c_{0,\mathcal I}$.

We first record a simple monotonicity observation: if $\mathcal I$ and $\mathcal J$ are ideals such that $\mathcal I\subseteq\mathcal J$, then $c_{0,\mathcal I}\subseteq c_{0,\mathcal J}$ and $(c_{0,\mathcal J})^\delta\subseteq(c_{0,\mathcal I})^\delta$ for every $\delta\in\{\alpha,\beta,\gamma\}$. In particular, $(c_{0,\mathcal I})^\alpha\subseteq(c_0)^\alpha$. The following elementary fact will show that
$(c_{0,\mathcal I})^\alpha=\ell_1$. We include a short proof for completeness.

\begin{proposition} 
$(c_0)^{\alpha} = \ell_1$.
\end{proposition}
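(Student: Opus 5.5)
We prove the two inclusions separately. The inclusion $\ell_1\subseteq (c_0)^\alpha$ is immediate. If $x\in\ell_1$ and $y\in c_0$, then $y$ is bounded, so
\[
\sum_i|x_iy_i|\le\|y\|_\infty\|x\|_1<\infty .
\]
This is exactly the observation already made in the text for $c_{0,\mathcal I}\subseteq\ell_\infty$.

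For the reverse inclusion we argue by contraposition. Take $x\in\mathbb K^{\mathbb N}$ with $\sum_i|x_i|=\infty$. We will construct a single $y\in c_0$ with $\sum_i|x_iy_i|=\infty$, which shows $x\notin(c_0)^\alpha$.

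\textbf{Step 1 (blocks).} Since the partial sums of $\sum_i|x_i|$ diverge, we choose inductively integers $0=n_0<n_1<n_2<\cdots$ such that
\[
\sum_{i=n_{k-1}+1}^{n_k}|x_i|\ \ge\ k \qquad\text{for every } k\ge1 .
\]
This is possible because every tail of a divergent series of nonnegative terms is itself divergent, so each new block can be made as large as needed.

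\textbf{Step 2 (the sequence $y$).} Define $y_i=1/k$ for $n_{k-1}<i\le n_k$. Then $y_i\to0$, so $y\in c_0$.

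\textbf{Step 3 (divergence).} The $k$-th block of $\sum_i|x_iy_i|$ equals $\frac1k\sum_{i=n_{k-1}+1}^{n_k}|x_i|\ge1$. Summing over the infinitely many blocks gives $\sum_i|x_iy_i|=\infty$, as required.

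The only point needing care is choosing the blocks so that their sums grow at least linearly, which exactly compensates for the decay of $y$. Any weights $\varepsilon_k\to0$ with block sums at least $1/\varepsilon_k$ would work equally well. Once the blocks are chosen, the rest is routine.

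Finally, combining the result with the monotonicity observation $(c_{0,\mathcal I})^\alpha\subseteq(c_0)^\alpha$ and the inclusion $\ell_1\subseteq(c_{0,\mathcal I})^\alpha$ noted earlier yields $(c_{0,\mathcal I})^\alpha=\ell_1$.
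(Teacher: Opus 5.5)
Your proof is correct and follows essentially the same route as the paper. You partition $\mathbb N$ into consecutive blocks whose sums of $|x_i|$ are at least the block index, set $y_i=1/k$ on the $k$-th block so that $y\in c_0$, and conclude that $\sum_i|x_iy_i|$ diverges; your observation that each block contributes at least $1$ makes this last step cleaner than the partial-sum estimate stated in the paper.
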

\begin{proof}
    Suppose towards a contradiction that there exists $(x_n)\in c_0)^{\alpha}\setminus \ell_1$.

    Given $k \in \N$, there exists $m_k\in \N$ such that $\sum_{i=1}^{m_k} |x_i| > k$. Take a partition of $\N=\bigcup_{n} I_n$ in consecutive intervals such that $\sum_{i \in I_n}|x_i|>n$ and consider the sequence $y = (y_i)$  defined by $y_i = \tfrac{1}{n}$ if $i\in I_n$. It is clear that $y\in c_0$, moreover, it is easy to check that $\sum_{n=1}^{m_k}|x_n y_n| >k$ for every $k\geq 1$, a contradiction.
\end{proof}

It is easy to check that if $E$ is a solid space (downward closed in respect to a partial order $\leq$ on $E$), then all Köthe-duals are equal. In particular, for $c_{0,\mathcal I}$, we obtain:

\begin{corollary}
    If $\mathcal I$ is an ideal, then $(c_{0,\mathcal I})^\delta = \ell_1$ for all $\delta \in \{\alpha, \beta,\gamma\}$.
\end{corollary}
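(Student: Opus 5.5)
The plan is to squeeze the three duals between $\ell_1$ and $\ell_1$ using the chain of inclusions already recorded. Since $c_{0,\mathcal I}\subseteq\ell_\infty$, every $x\in\ell_1$ satisfies $\sum|x_iy_i|\le\|x\|_1\|y\|_\infty<\infty$ for all $y\in c_{0,\mathcal I}$. Hence $\ell_1\subseteq(c_{0,\mathcal I})^\alpha\subseteq(c_{0,\mathcal I})^\beta\subseteq(c_{0,\mathcal I})^\gamma$. It therefore suffices to show $(c_{0,\mathcal I})^\gamma\subseteq\ell_1$.

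First, by the monotonicity observation applied to $\fin\subseteq\mathcal I$, we have $c_0\subseteq c_{0,\mathcal I}$ and so $(c_{0,\mathcal I})^\gamma\subseteq(c_0)^\gamma$. It remains to prove $(c_0)^\gamma\subseteq\ell_1$. The key tool is solidity: $c_0$ (and also $c_{0,\mathcal I}$) is solid. Indeed, if $|y'_i|\le|y_i|$ with $(y_i)$ $\mathcal I$-null, then $\{i:|y'_i|\ge\varepsilon\}\subseteq\{i:|y_i|\ge\varepsilon\}\in\mathcal I$. For solid $E$ one shows $E^\gamma\subseteq E^\alpha$ as follows. Take $x\in E^\gamma$ and $y\in E$, and set $y'_i=\overline{\operatorname{sgn}(x_i)}\,|y_i|$. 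Then $|y'_i|=|y_i|$, so $y'\in E$, and $\sum_{i\le n}x_iy'_i=\sum_{i\le n}|x_iy_i|$. These partial sums are bounded because $x\in E^\gamma$, and since they are increasing, $\sum|x_iy_i|<\infty$. Thus $x\in E^\alpha$.

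Applying this with $E=c_0$ gives $(c_0)^\gamma=(c_0)^\alpha=\ell_1$ by the preceding Proposition. Combining with the first paragraph yields equality of all three duals with $\ell_1$. Alternatively, one can apply the solidity argument directly to $E=c_{0,\mathcal I}$ and then use $(c_{0,\mathcal I})^\alpha\subseteq(c_0)^\alpha=\ell_1$.

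I expect no real obstacle here. The only point needing care is the sign-adjustment step in the complex case, where $\operatorname{sgn}$ must be taken as the unimodular phase $x_i/|x_i|$ (and $0$ when $x_i=0$). Checking that this preserves membership in $E$ is exactly where solidity is used.
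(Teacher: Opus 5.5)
Your proposal is correct and follows essentially the paper's route: the paper notes that the Köthe duals of a solid sequence space coincide and combines this with $\ell_1\subseteq(c_{0,\mathcal I})^\alpha\subseteq(c_0)^\alpha=\ell_1$, which is exactly your alternative version. Your sign-adjustment argument supplies the verification of the solidity claim that the paper leaves as ``easy to check''.
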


Finally, we return to the measure representation. We show that every $m\in\ba(\mathcal I)$ admits a canonical decomposition as the sum of a measure determined by its values on the singletons and a measure in $\ba_0(\mathcal I)$. As a consequence, the canonical copy of $\ell_1$ in $c_{0,\mathcal I}^*$ is the range of a projection of norm one. This yields the isometric decomposition stated in Theorem~\ref{thm:dual-splitting}.

Remember that $c_{0,\mathcal I} \equiv C_0(U_\mathcal I)$ as Banach lattices (Proposition \ref{isometrias}). Thus, the Riesz Representation Theorem yields $c_{0,\I}^*\equiv M(U_\I)$.

 For a finite Radon measure $\mu$ on $U_\I$, define:
\[
 \forall B\in\bor(U_\mathcal I):\quad
 \mu_{\omega}(B)\doteq\mu(B\cap\omega) \quad \text{and} \qquad
 \mu_F(B)\doteq\mu(B\cap K_\I)
 .
\]
These restrictions are also Radon measures and are concentrated on the disjoint Borel sets $\omega$ and $F_\I$, respectively, thus $\mu_{\omega}$ and $\mu_F$ are mutually singular.  Their variation measures are concentrated on the same sets, thus: 
\begin{equation}\label{eq:measure-l1-split}
 \mu=\mu_{\omega}+\mu_F,
 \qquad
 |\mu|=|\mu_{\omega}|+|\mu_F|,
 \qquad
 \|\mu\|=\|\mu_{\omega}\|+\|\mu_F\|.
\end{equation}

\begin{theorem}\label{thm:dual-splitting}
For every ideal $\I$:
\begin{equation}\label{eq:dual-splitting}
 c_{0,\I}^* 
 \equiv \ba(\mathcal I)
 \equiv \ell_1\oplus_1 M(U_\I\setminus\omega)
 \equiv M(\omega) \oplus_1 M(U_\I\setminus\omega)
 \equiv \ell_1\oplus_1\ba_0(\I)
\end{equation}
isometrically.
\end{theorem}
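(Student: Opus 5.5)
The plan is to prove the four isometries in the chain one at a time. The first identification, $c_{0,\I}^*\equiv\ba(\I)$, is exactly Theorem~\ref{thm:dual-ba}. For the second, use Proposition~\ref{isometrias}: $c_{0,\I}\equiv C_0(U_\I)$, so the Riesz representation theorem gives $c_{0,\I}^*\equiv M(U_\I)$, the finite Radon measures with total variation norm. In $U_\I$ every $n\in\omega$ is an isolated point, so $\omega$ is an open subset and $U_\I\setminus\omega$ is closed; both are Borel. (The sets in the splitting before the theorem should be $\omega$ and $U_\I\setminus\omega$; note that $K_\I\cap U_\I=\emptyset$.) For $\mu\in M(U_\I)$ put $\mu_\omega=\mu(\,\cdot\,\cap\omega)$ and $\mu_F=\mu(\,\cdot\,\cap(U_\I\setminus\omega))$. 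Then \eqref{eq:measure-l1-split} shows that $\mu\mapsto(\mu_\omega,\mu_F)$ is an isometry of $M(U_\I)$ onto $M(\omega)\oplus_1 M(U_\I\setminus\omega)$. It is onto because restrictions of Radon measures to Borel subsets are Radon, and any pair of Radon measures on the two pieces adds to a Radon measure on $U_\I$.

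Next, identify $M(\omega)$ with $\ell_1$. A finite Radon measure on the discrete countable set $\omega$ is determined by $a_n=\mu(\{n\})$, and $\|\mu\|=\sum_n|a_n|$ by countable additivity. Conversely, every $(a_n)\in\ell_1$ defines such a measure. This gives $M(\omega)\equiv\ell_1$, and hence the second and third isometries.

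For the last form, $\ell_1\oplus_1\ba_0(\I)$, I would work directly in $\ba(\I)$ so that the decomposition is canonical. Given $m\in\ba(\I)$, set $a_n=m(\{n\})$. Disjointness of finitely many singletons gives $\sum_{n\le N}|a_n|\le\|m\|$, so $(a_n)\in\ell_1$. Define $m_a(A)=\sum_{n\in A}a_n$ for $A\in\I$ and $m_0=m-m_a$. Then $m_a\in\ba(\I)$ with $\|m_a\|=\|(a_n)\|_1$: the upper bound comes from the triangle inequality, and the lower bound from using the singletons themselves as a disjoint family. Also $m_0(\{n\})=0$, so $m_0\in\ba_0(\I)$. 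The map $P\colon m\mapsto m_a$ is then a projection onto the copy of $\ell_1$ with kernel $\ba_0(\I)$, so $\ba(\I)=\ell_1\oplus\ba_0(\I)$ algebraically.

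The main obstacle is proving the $\ell_1$-norm identity $\|m\|=\|m_a\|+\|m_0\|$. The inequality $\le$ is the triangle inequality. For $\ge$, fix $\varepsilon>0$ and disjoint $A_1,\dots,A_k\in\I$ with $\sum_j|m_0(A_j)|>\|m_0\|-\varepsilon$, and choose $N$ with $\sum_{n>N}|a_n|<\varepsilon$. Replace each $A_j$ by $A_j\setminus[0,N]$. This changes $m_0(A_j)$ by $-\sum_{n\in A_j\cap[0,N]}m_0(\{n\})=0$, so the sum is unchanged. On these sets $|m(A_j)|\ge|m_0(A_j)|-\sum_{n\in A_j,\,n>N}|a_n|$. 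Now combine the family $\{A_j\setminus[0,N]\}$ with the singletons $\{0\},\dots,\{N\}$. This is a disjoint family in $\I$, so
\[
\|m\|\ge\sum_{n\le N}|a_n|+\sum_j|m(A_j\setminus[0,N])|\ge\|m_a\|+\|m_0\|-3\varepsilon .
\]
Alternatively, the same identity can be read off by transporting the measure decomposition through the isometry $\ba(\I)\equiv M(U_\I)$: there $m(\{n\})=\mu(\{n\})$, so $m_a$ corresponds to $\mu_\omega$ and $m_0$ to $\mu_F$. In particular $\ba_0(\I)\equiv M(U_\I\setminus\omega)$, and $P$ is a norm-one projection onto $J_\I(\ell_1)$.
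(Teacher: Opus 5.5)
Your proposal is correct. For the first three identifications it follows the paper: Theorem~\ref{thm:dual-ba}, the Riesz representation of $C_0(U_\I)^*$, and the splitting of $\mu$ into its parts on $\omega$ and on $U_\I\setminus\omega$. You are also right that the paper's $\mu_F(B)=\mu(B\cap K_\I)$ must be read as $\mu(B\cap(U_\I\setminus\omega))$, since $K_\I\cap U_\I=\emptyset$.

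Where you genuinely differ is the key norm identity $\|m\|=\|m_a\|+\|m_0\|$ behind $\ba(\I)\equiv\ell_1\oplus_1\ba_0(\I)$. The paper obtains it only by transport. It uses $m(A)=\mu(\widehat A)$ to see that $m_a$ and $m_0$ correspond to $\mu_\omega$ and $\mu_F$, and then reads the identity off the mutual singularity in \eqref{eq:measure-l1-split}. Your truncation argument works differently: you cut a near-optimal disjoint family for $m_0$ off $[0,N]$, which costs nothing because $m_0$ vanishes on singletons, and then adjoin the singletons $\{0\},\dots,\{N\}$. This proves the identity purely from finite additivity and $\fin\subseteq\I$. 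It needs no topology, no regularity, and no correspondence $\chi_A\leftrightarrow\chi_{\widehat A}$. So it is more elementary, and it shows that the $\ell_1$-splitting of the atomic part holds for bounded finitely additive measures on any ideal containing the finite sets.

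What the paper's route gives in one stroke, and your direct argument alone does not, is the concrete identification $\ba_0(\I)\equiv M(U_\I\setminus\omega)$ that makes the diagram after the proof commute. You recover this anyway with your closing transport remark. For the statement as written it is not needed, since you have already shown each member of the chain isometric to $\ba(\I)$.
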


\begin{proof}
Let $m\in\ba(\I)$ and put $a_n=m(\{n\})$ for each $n\in\omega$. If $F\subseteq \omega$ is finite, then:
\[
 \sum_{n\in F}|a_n|=\sum_{n\in F}|m(\{n\})|\leq\|m\|.
\]
Taking the supremum over $F$ yields $a=(a_n)\in\ell_1$ with $\|a\|_1\leq\|m\|$.

\begin{claim}\label{claim: definition of m_a}
    $m_a:\mathcal I\to\R$ defined by $m_a(A)\doteq \sum_{n\in A} a_n$ is such that $m_a\in \ba(\mathcal I)$ and $\|m_a\| = \|a\|_1$. \bigskip

    Finite additivity is clear and $m_a$ is well defined because $m_a(A)$ is absolutely convergent (subseries of $\sum_n |a_n|$). Thus, for $A_1,\ldots,A_r\in\I$ pairwise disjoint, it follows that:
\[
 \sum_{j=1}^r|m_a(A_j)|
 \leq\sum_{j=1}^r\sum_{n\in A_j}|a_n|
 \leq\|a\|_1.
\]
Hence, $m_a\in\ba(\I)$ and $\|m_a\|\leq\|a\|_1$. Conversely, one can prove that $\|a\|_1\leq \|m_a\|$ applying the variation norm of $m_a$ on the sets $\{0,\ldots,n\}$.
\end{claim}

Observe that $m_0\doteq m-m_a \in \ba_0(\mathcal I)$. Indeed, for each $n\in\omega:$
\[
m_0(\{n\})\doteq m(\{n\})-m_a(\{n\}) = a_n -a_n=0.
\]

It remains to prove the norm identity.  Let $\mu\in M(U_\I)$ correspond to $m$ under Theorem~\ref{thm:dual-ba}. For $A\in\mathcal I$, the element  $\chi_A\in c_{0,\mathcal I}$ corresponds to $\widehat A\subseteq U_\I$ in $C_0(U_\mathcal I)$ in \cite[Proposition 3.1]{rincon-uzcategui}, thus, $ m(A)=\mu(\widehat A)$ for every $A\in\I$. In particular, $\mu(\{n\})=m(\{n\})=a_n$.  Therefore:
\[
 \mu_{\omega}=\sum_{n\in\omega} a_n\delta_n
 \qquad \text{and}\qquad
 \|\mu_{\omega}\|=\sum_{n\in\omega}|a_n|=\|a\|_1.
\]

Because a principal ultrafilter $n$ belongs to $\widehat A$ exactly when $n\in A$, we have: 
\[\forall A\in\mathcal I:\quad\mu_{\omega}(\widehat A)=\sum_{n\in A}a_n=m_a(A).\]

On the other hand, for each $A\in\I$, $\mu_F(\widehat A) = \mu(\widehat A)- \mu_\omega(\widehat A) = m(A) - m_a(A)$. Therefore, $m_0$ corresponds to $\mu_F$ and, by Equation~\eqref{eq:measure-l1-split}:
\[ \|m\|=\|\mu\|  =\|\mu_{\omega}\|+\|\mu_F\|  =\|a\|_1+\|m_0\|\]

It is straightforward to check that the map $\Lambda:\ba(\mathcal I)\to \ell_1\oplus_1\ba_0(\mathcal I)$ given by $\Lambda(m)=(a, m_0)$, where $a=\big(m(\{n\})\big)_{n\in\omega}$ and $m_0=m-m_a$ is surjective and the norm identity above shows that $\Lambda$ is an isometry. Therefore, $\ba(\I)\equiv\ell_1\oplus_1\ba_0(\I)$.

On the other hand, the mutually singular decomposition $\mu=\mu_\omega+\mu_F$ in Equation\eqref{eq:measure-l1-split} and the canonical isometric identification $M(\omega)\equiv\ell_1$ give $M(U_\I)\equiv M(\omega)\oplus_1M(U_\I\setminus \omega)\equiv \ell_1\oplus_1M(U_\I\setminus \omega)$. Combining these identifications with $ c_{0,\I}^*\equiv\ba(\I)\equiv M(U_\I)$ proves both isometric identifications in
\eqref{eq:dual-splitting}.
\end{proof}

The isometric identifications established in the previous proof can be summarized in the following commutative diagram:
\[
\begin{tikzcd}[column sep=6em, row sep=4em]
c_{0,\I}^*\equiv \ba(\mathcal I)
\arrow[r, "\Phi"]
\arrow[d, "\Lambda"']
&
M(U_{\mathcal I})
\arrow[d, "\Gamma"]
\\
\ell_1\oplus_1\ba_0(\mathcal I)
&
M(\omega)\oplus_1M(U_{\mathcal I}\setminus\omega)
\arrow[l, "\Psi"']
\end{tikzcd}
\]

where $\Phi$ is the isometric identification given by the Riesz Theorem and $c_{0,\mathcal I}\equiv C_0(U_\mathcal I)$ that associates:
\[
\Phi(m) = \mu \qquad\text{and}\qquad m(A)=\mu(\widehat A), \,\forall A\in\mathcal I.
\]

The map $\Gamma:M(U_\mathcal I)\to M(\omega)\oplus_1 M(U_\mathcal I\setminus\omega)$, given by $\Gamma(\mu) = (\mu_\omega,\mu_F)$, records the two mutually singular components of $\mu$ isometrically via Equation~\eqref{eq:measure-l1-split} and $\Lambda(m) = (a,m_0)$, where:
\[
a = \big(m(\{n\})\big)_{n\in\omega}\qquad \text{and}\qquad m_0 = m-m_a
\]
with $m_a$ defined in Claim~\ref{claim: definition of m_a}. 

Finally, the map $\Psi: M(\omega)\oplus_1 M(U_\mathcal I\setminus\omega)\to \ell_1 \oplus_1 \ba_0(\mathcal I)$ is the isometric identification given by $\Psi(\mu_\omega,\mu_F) = (a,m_0)$, whose inverse is $\Psi^{-1}(a, m_0) = (\mu_\omega, \mu_{m_0})$ where $\mu_\omega \doteq \sum_{n\in\omega} a_n\delta_n$ and $\mu_{m_0}$ is the measure corresponding to $m_0$ under the Riesz Identification $\Phi$.

\section{Compact subsets of \texorpdfstring{$c_{0,\mathcal I}$}{c0,I}}\label{section compactness}

In this section, we establish an ideal version of Dini's theorem in Theorem~\ref{thm:ideal-dini}. We then use this result to characterize the relatively compact subsets of $c_{0,\mathcal I}$
in Theorem~\ref{thm:compactness-c0I}, extending the classical
criterion for $c_0$.

For $A\in\mathcal I$, we consider $q_A: c_{0,\mathcal I}\rightarrow \R$, defined by: 
\[q_A(x)=\sup_{n\notin A}|x_n|.\]

\begin{theorem}[Uniform ideal tails]\label{thm:ideal-dini}
If $L$ is a compact subset of $c_{0,\mathcal I}$, then:
\[\inf_{A\in\mathcal I}\sup_{x\in L}q_A(x)=0.\]
Equivalently, the decreasing net $ \bigl(q_A|_L\bigr)_{A\in(\mathcal I,\subseteq)}$ converges uniformly to zero.
\end{theorem}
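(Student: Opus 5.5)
The argument is a standard $\varepsilon$-net reduction. It combines two facts: each $q_A$ is $1$-Lipschitz, and the directed family $(\mathcal I,\subseteq)$ lets us take a common ``tail set'' for finitely many points.

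Fix $\varepsilon>0$. Since $L$ is compact in $c_{0,\mathcal I}$, it is totally bounded, so there are $x^1,\dots,x^k\in L$ with $L\subseteq\bigcup_{j=1}^k B(x^j,\varepsilon/2)$. For each $j$, the membership $x^j\in c_{0,\mathcal I}$ means $x^j_n\stackrel{\mathcal I}{\to}0$. Hence the set
\[A_j\doteq A(x^j,\varepsilon/2)=\{n\colon |x^j_n|\geq\varepsilon/2\}\]
belongs to $\mathcal I$. Put $A\doteq A_1\cup\dots\cup A_k$, which lies in $\mathcal I$ because ideals are closed under finite unions. By construction $q_{A}(x^j)\leq\varepsilon/2$ for every $j$, since every $n\notin A$ satisfies $|x^j_n|<\varepsilon/2$.

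Next we record that for every $B\subseteq\mathbb N$ the seminorm $q_B$ satisfies $|q_B(x)-q_B(y)|\leq q_B(x-y)\leq\|x-y\|_\infty$. This follows from the triangle inequality coordinatewise, by taking suprema over $n\notin B$. Given $x\in L$, choose $j$ with $\|x-x^j\|_\infty<\varepsilon/2$. Then
\[q_A(x)\leq q_A(x^j)+\|x-x^j\|_\infty<\varepsilon.\]
Thus $\sup_{x\in L}q_A(x)\leq\varepsilon$, and since $\varepsilon$ was arbitrary, the infimum over $A\in\mathcal I$ is $0$.

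For the reformulation as a net, note that $A\subseteq B$ implies $q_B\leq q_A$ pointwise, because the supremum is taken over a smaller set. So $(q_A|_L)_{A\in\mathcal I}$ is a decreasing net of nonnegative functions indexed by the directed set $(\mathcal I,\subseteq)$. For a decreasing net, uniform convergence to $0$ on $L$ is exactly the statement that $\inf_A\sup_L q_A=0$: once $\sup_L q_{A_0}\leq\varepsilon$, every $B\supseteq A_0$ also satisfies $\sup_L q_B\leq\varepsilon$.

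There is no genuine obstacle. The only points needing care are using the finite union property of $\mathcal I$ to get a single $A$ that works for all net points, and the Lipschitz estimate for $q_A$. The compactness hypothesis is used only through total boundedness, so the same proof gives the statement for relatively compact $L$.
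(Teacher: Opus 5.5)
Your proof is correct and essentially the same as the paper's. Both rest on three facts: $q_A$ is $1$-Lipschitz, $A(x,\varepsilon/2)\in\mathcal I$, and $\mathcal I$ is closed under finite unions. The only difference is that the paper extracts a finite subcover from the open cover $\{y\in L: q_A(y)<\varepsilon\}_{A\in\mathcal I}$, whereas you phrase the same compactness step via a finite $\varepsilon/2$-net, which also applies directly to relatively compact $L$.
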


\begin{proof}
Observe that each $q_A$ is $1$-Lipschitz, and that $q_{A\cup B}\leq\min\{q_A,q_B\}$ for every $A,B\in\I$.

Fix $\varepsilon>0$.  For every $x\in c_{0,\mathcal I}$, $A(x,\varepsilon/2)=\left\{n\in\omega:|x_n|\geq\frac{\varepsilon}{2}\right\} \in \I$, and $q_{A(x,\varepsilon/2)}(x)\leq\frac{\varepsilon}{2}<\varepsilon$. Because $q_A$ is continuous, $U_A\doteq\{y\in L:q_A(y)<\varepsilon\}$ is open in $L$ and $(U_A)_{A\in\I}$ cover $L$. By compactness, $L \subseteq \bigcup_{i=1}^n U_{A_i}$. Then $A=A_1\cup\cdots\cup A_n \in \I$ and for each $x\in L$, some $j\leq n$ satisfies $q_{A_j}(x)<\varepsilon$, hence $q_A(x)\leq q_{A_j}(x)<\varepsilon$.
\end{proof}
\begin{remark}
When $\mathcal I=\fin$, the sets $A_N=\{1,\ldots,N\}$ form a sequence in $(\fin,\subseteq)$, and $q_{A_N}$ is just the tail of a sequence:
\[
q_{A_N}(x)=\sup_{n>N}|x_n|.
\]

For every $x\in c_0$, the sequence $\bigl(q_{A_N}(x)\bigr)_N$ decreases to zero. Hence, when $L$ is compact, the conclusion of Theorem~\ref{thm:ideal-dini} also follows directly from the classical Dini's theorem applied to the decreasing sequence of continuous functions $\bigl(q_{A_N}|_L\bigr)_N$. Thus, in the case $\mathcal I=\fin$, the lemma recovers the usual uniform-tail property of compact subsets of $c_0$ stated in Proposition~\ref{prop: rel compact c0}).
\end{remark}

Theorem~\ref{thm:ideal-dini} shows that compact families are uniformly $\mathcal I$-null: for every $\varepsilon>0$, their coordinates are smaller than $\varepsilon$ outside a common member of $\mathcal I$. This condition alone, however, does not imply relative compactness when $\mathcal I$ contains an infinite set:

\begin{example}
Let $A \in \mathcal I$ and define $K_A \doteq \{ x\in c_{0,\mathcal I}: \| x\| \leq \chi_A \}\subseteq c_{0,\mathcal I}$. Observe that $K$ is bounded and $K = \prod_{n\in \N} K_n$, where $K_n = \begin{cases}
    0,& \text{if } n\notin A\\
    [-1,1],& \text{if } n\in A
\end{cases}$ is isomorphic to $B_{\ell_\infty(A)}$. 

Thus, $K_A$ is not compact when $A$ is infinite.
\end{example}

The following result is known (see \cite[Lemma 4]{Sargent1966}). We include a proof here for the sake of completeness.

\begin{lemma}\label{lem:finite-partition}
Let $S$ be a set and let $H$ be a bounded subset of $\ell_\infty(S)$.  The
following are equivalent.
\begin{enumerate}[label=\textup{(\roman*)}]
\item $H$ is relatively compact in $\ell_\infty(S)$.
\item For every $\varepsilon>0$, there is a finite partition $S=S_1\mathbin{\dot\cup}\cdots\mathbin{\dot\cup}S_m$ such that:
\[
 \sup_{h\in H}\sup_{s,t\in S_j}|h(s)-h(t)|<\varepsilon,
 \qquad j=1,\ldots,m.
\]
\end{enumerate}
\end{lemma}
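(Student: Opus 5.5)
We prove the two implications separately, using total boundedness in $\ell_\infty(S)$ in both directions.

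For (ii)$\Rightarrow$(i), since $\ell_\infty(S)$ is complete, it suffices to show that $H$ is totally bounded. Fix $\varepsilon>0$ and take the partition $S_1,\ldots,S_m$ given by (ii). Let $M=\sup_{h\in H}\|h\|_\infty<\infty$, and pick a finite $\varepsilon$-net $F$ of the scalar interval (or disc) of radius $M$. Consider the finite family $\mathcal G$ of functions that are constant on each $S_j$ with values in $F$; it has at most $|F|^m$ elements. Given $h\in H$, choose points $s_j\in S_j$ and values $c_j\in F$ with $|c_j-h(s_j)|<\varepsilon$. Let $g\in\mathcal G$ take the value $c_j$ on $S_j$. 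Then for $t\in S_j$,
\[
|h(t)-g(t)|\le |h(t)-h(s_j)|+|h(s_j)-c_j|<2\varepsilon .
\]
So $\mathcal G$ is a finite $2\varepsilon$-net for $H$, and $H$ is totally bounded.

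For (i)$\Rightarrow$(ii), fix $\varepsilon>0$ and use total boundedness to pick $h_1,\ldots,h_k\in H$ forming an $\varepsilon/3$-net of $H$. Cover the bounded scalar range by finitely many sets $D_1,\ldots,D_p$ of diameter less than $\varepsilon/3$. For each tuple $\sigma=(i_1,\ldots,i_k)\in\{1,\ldots,p\}^k$, set
\[
S_\sigma=\{s\in S: h_r(s)\in D_{i_r}\text{ for } r=1,\ldots,k\}.
\]
To make these sets disjoint, first refine the $D_i$ into a disjoint family, for instance half-open boxes. Discarding empty sets, this gives a finite partition of $S$. Now let $s,t\in S_\sigma$ and $h\in H$, and choose $r$ with $\|h-h_r\|<\varepsilon/3$. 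Then
\[
|h(s)-h(t)|\le |h(s)-h_r(s)|+|h_r(s)-h_r(t)|+|h_r(t)-h(t)|<\varepsilon .
\]
This gives (ii). The bound is strictly less than $\varepsilon$ for each pair, but the supremum over $H$ and over $s,t$ is only $\le\varepsilon$; starting from $\varepsilon/4$ instead restores the strict inequality.

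Neither direction involves a real obstacle. The only points needing care are making the partition of the range genuinely disjoint with diameters below the threshold, and handling the strict inequality in (ii), which is fixed by shrinking $\varepsilon$.
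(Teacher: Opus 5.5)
Your proposal is correct and follows essentially the same route as the paper. For (i)$\Rightarrow$(ii) you take a finite net in $H$, partition $S$ by the level sets of the net functions and take the common refinement; for (ii)$\Rightarrow$(i) you approximate each $h$ by a function constant on the cells $S_j$. The only cosmetic difference is that you build the finite net of step functions explicitly, whereas the paper cites total boundedness of bounded sets in the finite-dimensional span of $\chi_{S_1},\ldots,\chi_{S_m}$.
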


\begin{proof}
$(i)\Rightarrow(ii):$ Assume first that $H$ is relatively compact. Given $\varepsilon>0$, choose $0<\delta<\dfrac{\varepsilon}{4}$. By compactness of $\cl(H)$, there exist $h^1,\ldots,h^r \in H$ such that $H\subseteq \cl(H) \subseteq \bigcup_{i=1}^r B(h^i,\delta)$. Notice that for each $1\leq i\leq r$, $h^i[S]$ is a relative compact set of $\K$ and by compacity, there exist $\alpha^i_1,\ldots,\alpha^i_{n_i}\in\K$ such that $h^i[S]\subseteq \bigcup_{j=1}^{n_i} B_\K(\alpha^i_j, \delta)$. Consider the sets:

\[
S_j^i \doteq \{s\in S: |h^i(s)-\alpha_j^i|<\delta \}, \quad \text{for } 1\leq i\leq r, 1\leq j\leq n_i.
\]

Without loss of generality, we can assume that for each $i\leq r$, $\mathcal P_i=\{S^i_j: 1\leq j\leq n_i\}$ is a finite partition of $S$. We take their common refinement:
\[
\mathcal P \doteq \left\{ \bigcap_{i=1}^r S^i_{j_i} : 1\leq j_k\leq n_k, \text{for } k\in\{1,\ldots,r\}\right\} \setminus \{\emptyset\}.
\]

It is clear that $\mathcal P$ is a partition of $S$, moreover, observe that for each $s,s'\in \bigcap_{i=1}^r S^i_{j_i}$, we have:
\[
\forall i\leq r: \quad |h^i(s)-h^i(s')|\leq |h^i(s)-\alpha^i_{j_i}|+ |\alpha^i_{j_i} -h^i(s')| < 2\delta.
\]

Given $h\in H$, then $h\in B(h^i, \delta)$ for some $i\leq r$. Then, for all $s,s'\in \bigcap_{i=1}^r S^i_{j_i}$:
\begin{align*}
    |h(s)-h(s')| &\leq  |h(s) - h^i(s)| + |h^i(s) - h^i(s')| + |h^i(s')-h(s')| \\
        & \leq \delta + 2\delta +\delta<\varepsilon.
\end{align*}
 Thus, $\sup_{h\in H}\sup_{s,t\in S_\ell}|h(s)-h(t)|<\varepsilon$.

\medskip
$(ii)\Rightarrow(i):$ Choose one point $s_j\in S_j$ for each $j\in\{1,\ldots,m\}$ and define $R(h)\doteq\sum_{j=1}^m h(s_j)\chi_{S_j}$. Then $\|h-R(h)\|_\infty<\varepsilon$ for every $h\in H$.  The set $R(H)$ is bounded in the finite-dimensional space $\operatorname{span}\{\chi_{S_1},\ldots,\chi_{S_m}\}$, hence it is totally bounded.  Therefore $H$ is totally bounded.
\end{proof}

For a nonempty bounded set $K\subseteq c_{0,\mathcal I}$, define $a_K(n)=\sup_{x\in K}|x_n|$, for each $n\in\omega$. 

Given a set $A\in\mathcal I$ and a bounded set $K\subseteq c_{0,\mathcal I}$, we define $P_A(K)\doteq \{ x|_A \in \ell_\infty (A): x\in K\}$. With this notation, we are now able to present a characterization of the relatively compact subsets of $c_{0,\mathcal I}$:

\begin{theorem}
\label{thm:compactness-c0I}
Let $K\subseteq c_{0,\mathcal I}$ be nonempty.  The following conditions are
equivalent.
\begin{enumerate}[label=\textup{(\arabic*)}]
\item $K$ is relatively compact in $c_{0,\mathcal I}$.

\item $K$ is bounded, $a_K\in c_{0,\mathcal I}$, and $P_A(K)$ is relatively compact in $\ell_\infty(A)$ for every $A\in\mathcal I$.

\item There is $b\in c_{0,\mathcal I}^{+}$ such that $x\leq b$ for each $x\in K$ and, for every $t>0$, $P_{B_t}(K)\subseteq\ell_\infty(B_t)$ is relatively compact, where $B_t\doteq\{n\in\omega:b_n\geq t\}$.

\item $K$ is bounded and, for every $\varepsilon>0$, there are $A\in\mathcal I$ and a finite partition $A=A_1\mathbin{\dot\cup}\cdots\mathbin{\dot\cup}A_m$ such that:
\[
\sup_{x\in K}\sup_{n\notin A}|x_n|<\varepsilon \qquad \text{ and }\qquad
\sup_{x\in K}\sup_{n,k\in A_j}|x_n-x_k|<\varepsilon
 \quad(j=1,\ldots,m).
\]
\end{enumerate}
\end{theorem}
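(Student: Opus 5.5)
The plan is to prove the cycle $(1)\Rightarrow(4)\Rightarrow(1)$, and separately $(1)\Rightarrow(2)\Rightarrow(3)\Rightarrow(4)$. The two tools are the uniform ideal-tail principle (Theorem~\ref{thm:ideal-dini}) and the partition criterion of Lemma~\ref{lem:finite-partition}. Throughout, I read the domination in (3) as $|x|\leq b$ coordinatewise for $x\in K$, which is what the argument needs.

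\emph{$(1)\Rightarrow(4)$ and $(1)\Rightarrow(2)$.} Let $L=\cl(K)$, which is compact in $c_{0,\mathcal I}$ because $c_{0,\mathcal I}$ is closed in $\ell_\infty$; in particular $K$ is bounded. Fix $\varepsilon>0$. Theorem~\ref{thm:ideal-dini} gives $A\in\mathcal I$ with $\sup_{x\in L}q_A(x)<\varepsilon$, which is exactly the first condition in (4). For every $B\subseteq\omega$ the restriction map $x\mapsto x|_B$ is $1$-Lipschitz from $\ell_\infty$ to $\ell_\infty(B)$, so $P_B(K)$ is relatively compact in $\ell_\infty(B)$; this gives the compactness part of (2). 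Applying Lemma~\ref{lem:finite-partition} to $P_A(K)$ with $S=A$ yields the finite partition $A=A_1\mathbin{\dot\cup}\cdots\mathbin{\dot\cup}A_m$ required in (4). For $a_K$, note that $a_K$ is bounded because $K$ is, and $\{n:a_K(n)\geq\varepsilon\}\subseteq A\in\mathcal I$ for the set $A$ just chosen. Hence $a_K\in c_{0,\mathcal I}$, and (2) holds.

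\emph{$(4)\Rightarrow(1)$.} Fix $\varepsilon>0$ and take $A$ and $A_1,\dots,A_m$ as in (4). Pick $s_j\in A_j$ and set $R(x)=\sum_j x_{s_j}\chi_{A_j}$. Each $A_j\subseteq A$ lies in $\mathcal I$, so $R(x)\in c_{0,\mathcal I}$. On $A$ we have $|x_n-R(x)_n|<\varepsilon$ by the oscillation bound, and off $A$ we have $R(x)_n=0$ and $|x_n|<\varepsilon$. Thus $\|x-R(x)\|\leq\varepsilon$ for all $x\in K$. The set $R(K)$ is bounded in a finite-dimensional space, hence totally bounded, so $K$ is totally bounded. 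Completeness of $c_{0,\mathcal I}$ then gives relative compactness.

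\emph{$(2)\Rightarrow(3)$ and $(3)\Rightarrow(4)$.} For $(2)\Rightarrow(3)$, take $b=a_K\in c_{0,\mathcal I}^+$. Then $|x|\leq b$ for every $x\in K$, and each $B_t$ belongs to $\mathcal I$ because $b$ is $\mathcal I$-null, so $P_{B_t}(K)$ is relatively compact by (2). For $(3)\Rightarrow(4)$, boundedness follows from $|x|\leq b$. Given $\varepsilon$, put $A=B_{\varepsilon/2}\in\mathcal I$. For $n\notin A$ we get $|x_n|\leq b_n<\varepsilon/2$ uniformly over $K$. Lemma~\ref{lem:finite-partition} applied to $P_A(K)$ then produces the partition of $A$.

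The only genuinely non-formal ingredient is extracting a single $A\in\mathcal I$ that works uniformly over $K$, and Theorem~\ref{thm:ideal-dini} already supplies it. The remaining care is bookkeeping: apply the tail theorem to the compact closure rather than to $K$ itself, and keep the strict and non-strict inequalities consistent when passing between $a_K$, $b$ and the sets $B_t$.
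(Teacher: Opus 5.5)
Your proof is correct and follows essentially the paper's route: Theorem~\ref{thm:ideal-dini} applied to $\cl(K)$ together with Lemma~\ref{lem:finite-partition}. The only difference is that you close the cycle through $(3)\Rightarrow(4)\Rightarrow(1)$, whereas the paper proves $(3)\Rightarrow(1)$ directly by covering $P_{B_{\varepsilon/2}}(K)$ with finitely many $\varepsilon/2$-balls and extending their centers by zero. Your reading of (3) as $|x|\leq b$ is the one the paper's own proof uses, and it is needed: with the one-sided $x\leq b$, the unbounded set $K=\{-ne_0:n\in\N\}$ with $b=0$ would satisfy (3).
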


\begin{proof}
(1)$\Rightarrow$(2): Notice that for $L=\overline K$, Theorem~\ref{thm:ideal-dini} implies that, for every $\varepsilon>0$, there is $A\in\mathcal I$ such that:
$\displaystyle \sup_{x\in K}\sup_{n\notin A}|x_n|\leq \sup_{x\in L}\sup_{n\notin A}|x_n|<\varepsilon.$ Thus, 
\[
\sup_{n\not\in A}a_K(n)=\sup_{n\not\in A}\sup_{x\in K}|x_n|=\sup_{x\in K}\sup_{n\notin A}|x_n|<\varepsilon.
\]
It follows that $\{n\in\omega:a_K(n)\geq\varepsilon\}\subseteq A\in\mathcal I$ and $a_K\in c_{0,\mathcal I}$.  Since every $P_A$ is continuous, $P_A(K)$ is relatively compact.  

\bigskip
(2)$\Rightarrow$(3): Take $b=a_K$. Then, $b\in c_{0,\mathcal I}^+$ and $|x_n|\leq b_n$ for each $x\in K$ and $n\in\omega$. The second part is immediate. 

\bigskip

$(3)\Rightarrow(1):$ Given $\varepsilon>0$, by our assumption $B\doteq B_{\frac{\varepsilon}{2}} = \{n:b_n\geq\varepsilon/2\}\in\mathcal I.$ By compactness of $\cl\big(P_B(K)\big)$, there exist $x^1,\ldots,x^j\in\ell_\infty(B)$ such that $P_B(K)\subseteq \bigcup_{i=1}^j B(x^i,\frac{\varepsilon}{2})$. We extend the center of this open cover as follows:
\[
\widetilde{x^i}(n) \doteq \begin{cases}
    0, &\text{if } n\notin B\\
    x^i(n), &\text{otherwise}.
\end{cases}
\]

It is easy to check that $\widetilde{x^i}\in c_{0,\mathcal I}$.

\begin{claim}
    $K\subseteq \bigcup_{i=1}^j B\big(\widetilde{x^i},\frac{\varepsilon}{2}\big)$.

    Indeed, for each $x\in K$, observe that $x|_B\in P_B(K)$, thus $x|_B\in B\big(\widetilde{x^i},\frac{\varepsilon}{2}\big)$ for some $i$ and we have:
    \[
    |x(n) - \widetilde{x^i}(n)| = 
    \begin{cases}
    |x(n)|, &\text{if } n\notin B\\
    |x(n) - x^i(n)|, &\text{otherwise}
\end{cases}
    \]
    and in both cases we have $|x(n) - \widetilde{x^i}(n)|<\frac{\varepsilon}{2}$. Thus $\|x- \widetilde{x^i}\|\leq \frac{\varepsilon}{2}$.
\end{claim}

By the previous claim, $K$ is relatively compact in $c_{0,\mathcal I}$.

\bigskip

$(1)\Rightarrow (4)$: Use Theorem~\ref{thm:ideal-dini} to choose
$A\in\mathcal I$ with uniformly small coordinates outside $A$.  The set
$P_A(K)$ is relatively compact, so Lemma~\ref{lem:finite-partition}
supplies the required finite partition of $A$.

\bigskip
$(4)\Rightarrow(1)$: Follows directly from Lemma~\ref{lem:finite-partition}.
\end{proof}

Under the isometric identification
$c_{0,\mathcal I}\equiv C_0(U_{\mathcal I})$ established in Proposition \ref{isometrias},
Theorem~\ref{thm:compactness-c0I} can be viewed as a concrete form of the
Arzel\`a--Ascoli theorem for the locally compact space $U_{\mathcal I}$. The
uniform ideal-tail condition corresponds to equivanishing at infinity,
whereas the relative compactness of the restrictions $P_A(K)$ records the
behavior of the family on the compact clopen subsets
$\widehat A\subseteq U_{\mathcal I}$.

We conclude by recovering the classical compactness criterion for $c_0$
(see \cite[Exercise 1.50, p.~41]{fabian-et-al}):

\begin{proposition}\label{prop: rel compact c0}
    A subset $K$ of $c_0$ is relatively compact if and only if there is $(x_n)\in c_0$ such that  for each $k=(k_n)\in K$ we have $|k_n|\leq|x_n|$ for all $n\in\mathbb N$.
\end{proposition}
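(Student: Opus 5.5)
The plan is to deduce Proposition~\ref{prop: rel compact c0} from Theorem~\ref{thm:compactness-c0I} applied to $\mathcal I=\fin$, where $c_{0,\fin}=c_0$. The key observation is that when every member of the ideal is finite, the compactness conditions on the restrictions $P_A(K)$ become automatic.

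For the forward direction, assume $K\subseteq c_0$ is relatively compact. Then $K$ is bounded, and condition (2) of Theorem~\ref{thm:compactness-c0I} gives $a_K\in c_{0,\fin}=c_0$. Put $x_n\doteq a_K(n)=\sup_{k\in K}|k_n|$. By construction, $|k_n|\leq a_K(n)=|x_n|$ for every $k\in K$ and every $n$, which is the required domination. Alternatively, one can use Theorem~\ref{thm:ideal-dini} directly: for each $\varepsilon>0$ it supplies a finite $A$ with $\sup_{k\in K}\sup_{n\notin A}|k_n|<\varepsilon$, and this says exactly that $a_K\in c_0$.

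For the converse, suppose $(x_n)\in c_0$ dominates $K$ coordinatewise. We verify condition (3) of Theorem~\ref{thm:compactness-c0I} with $b=(|x_n|)\in c_0^+$. This $b$ satisfies $|k|\leq b$ for all $k\in K$; in the real case this gives $k\leq b$, and the proof of (3)$\Rightarrow$(1) in fact only uses $|k_n|\leq b_n$. Now fix $t>0$ and set $B_t=\{n:b_n\geq t\}$. Since $b\in c_0$, the set $B_t$ is finite, so $\ell_\infty(B_t)$ is finite-dimensional. Moreover, $P_{B_t}(K)$ is bounded by $\|b\|_\infty$ in this space, hence relatively compact. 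Theorem~\ref{thm:compactness-c0I} therefore yields that $K$ is relatively compact in $c_0$.

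The only point needing care is the interpretation of the inequality in condition (3). In its proof, the bound $x\leq b$ is used in the form $|x_n|\leq b_n$ (to get $|x_n|<\varepsilon/2$ off $B_{\varepsilon/2}$), so condition (3) should be read with $|x|\leq b$. With that reading the argument is immediate. If one prefers to avoid this issue, the converse can instead be checked directly with an $\varepsilon/2$-net: choose $N$ with $|x_n|<\varepsilon/2$ for $n>N$, and take a finite $\varepsilon/2$-net of the bounded set $\{k|_{\{1,\ldots,N\}}\}$ in $\mathbb K^N$, extended by zeros. Each $k\in K$ is then within $\varepsilon/2$ of some net point, so $K$ is totally bounded.
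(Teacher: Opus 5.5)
Your proposal is correct and follows the paper's route. Like the paper, you specialize Theorem~\ref{thm:compactness-c0I} to $\mathcal I=\fin$: every $A\in\fin$ is finite, so the compactness requirements on the restrictions $P_A(K)$ hold automatically and relative compactness reduces to $a_K\in c_0$. Your remark that condition (3) must be read as $|x|\leq b$, which is how its proof actually uses it, is accurate, and your direct $\varepsilon/2$-net argument for the converse is a valid self-contained alternative.
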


Observe that Proposition~\ref{prop: rel compact c0} is obtained from Theorem~\ref{thm:compactness-c0I} because when considering $\mathcal I=\fin$, the statement of the theorem is reduced to:
\[
 K\subseteq c_0\text{ is relatively compact}  \quad\Longleftrightarrow\quad  a_K\in c_0.
\]
Equivalently, $K$ is dominated coordinatewise by one element of $c_0$.

\section{On \texorpdfstring{$c_{0,\mathcal I}$}{c0I} valued operators}\label{section sobzcyk}

\subsection{Bounded and compact \texorpdfstring{$c_{0,\mathcal I}$}{c0I} valued operators}

Let $\mathcal I$ be an ideal in $\mathbb N$. We denote the coordinate projections in $c_{0,\mathcal I}$ by $\pi_n:c_{0,\mathcal I}\rightarrow \mathbb K$.

\begin{remark}\label{the map between B and cI}
For a Banach space $X$, every bounded operator $T:X\rightarrow \ell_\infty$ is associated with the bounded sequence of functionals $(\alpha_n^*)_{n\in\mathbb N}$, where $\alpha_n^*\doteq \pi_n\circ T\in X^*$, with $\|T\|=\sup_{n\in\mathbb N}\|\alpha_n^*\|$. Moreover, observe that $\ran(T)\subseteq c_{0,\mathcal I}$ is equivalent to the condition 
 $\mathcal I-\lim_{n}\alpha_n^*(x) = 0$ for all $x\in X$. Moreover, it is not difficult to prove that
    \[c_{0,\mathcal I}^{w^*}(X^*)\doteq \{(x_n^*)\in\ell_\infty(X^*): \mathcal I-\lim_n x_n^*(x)=0 \text{ for every } x\in X \}\] is a closed subspace of $(\ell_\infty(X^*),\|\cdot\|_\infty)$.
\end{remark}

\begin{theorem}
   Let $X$ be a Banach space and $\mathcal I$ be an ideal on $\mathbb N$. The map
 \begin{equation*}
       \Psi\colon B(X,c_{0,\mathcal I})\to c_{0,\mathcal I}^{w^*}(X^*),\qquad
        T\mapsto (\alpha_n^*)
 \end{equation*}
    is an onto linear isometry. Moreover, if $X$ is infinite dimensional and $\mathcal I\neq{\sf Fin}$, then $\Psi(K(X,c_{0,\mathcal I}))\subsetneq c_{0,\mathcal I}(X^*)$.
\end{theorem}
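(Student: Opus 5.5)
The isometry part is essentially contained in Remark~\ref{the map between B and cI}, so I only need to organize it. Given $T\in B(X,c_{0,\mathcal I})$, set $\alpha_n^*=\pi_n\circ T$. Then
\[
\|T\|=\sup_{\|x\|\le1}\sup_n|\alpha_n^*(x)|=\sup_n\|\alpha_n^*\|,
\]
and $Tx\in c_{0,\mathcal I}$ for every $x$ says exactly that $\mathcal I$-$\lim_n\alpha_n^*(x)=0$. So $\Psi$ is a linear isometry into $c_{0,\mathcal I}^{w^*}(X^*)$. For surjectivity, given $(x_n^*)\in c_{0,\mathcal I}^{w^*}(X^*)$, I define $Tx=(x_n^*(x))_n$. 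This sequence is bounded by $\sup_n\|x_n^*\|\,\|x\|$ and is $\mathcal I$-null, so $T\in B(X,c_{0,\mathcal I})$ and $\Psi(T)=(x_n^*)$.

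For the inclusion $\Psi(K(X,c_{0,\mathcal I}))\subseteq c_{0,\mathcal I}(X^*)$, let $T$ be compact and apply Theorem~\ref{thm:ideal-dini} to the compact set $L=\overline{T(B_X)}$. Given $\varepsilon>0$, this yields $A\in\mathcal I$ with $\sup_{x\in B_X}\sup_{n\notin A}|\alpha_n^*(x)|<\varepsilon$. That is, $\|\alpha_n^*\|<\varepsilon$ for all $n\notin A$, so $\{n:\|\alpha_n^*\|\ge\varepsilon\}\subseteq A\in\mathcal I$. As a side observation, Schauder's theorem also shows that $\{\alpha_n^*\}$ is relatively norm compact in $X^*$: indeed $\alpha_n^*=T^*(\delta_n)$ with $\|\delta_n\|=1$ in $c_{0,\mathcal I}^*$, so $\{\alpha_n^*\}\subseteq T^*(B_{c_{0,\mathcal I}^*})$. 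This is the property I will exploit to get strictness.

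For strict inclusion, I use $\mathcal I\neq\fin$ to pick an infinite set $A=\{a_1<a_2<\cdots\}\in\mathcal I$. Since $X$ is infinite dimensional, so is $X^*$, and Riesz's lemma gives a sequence $(y_k^*)$ in the unit sphere of $X^*$ with $\|y_k^*-y_j^*\|\ge1/2$ for $k\neq j$. I set $x_{a_k}^*=y_k^*$ and $x_n^*=0$ for $n\notin A$. This sequence is bounded, and $\{n:\|x_n^*\|\ge\varepsilon\}\subseteq A\in\mathcal I$, so $(x_n^*)\in c_{0,\mathcal I}(X^*)\subseteq c_{0,\mathcal I}^{w^*}(X^*)$. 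Hence $(x_n^*)=\Psi(T)$ for some $T\in B(X,c_{0,\mathcal I})$. If $T$ were compact, the Schauder observation above would make $\{y_k^*\}$ relatively norm compact, which is impossible for a $1/2$-separated infinite set. So $(x_n^*)\notin\Psi(K(X,c_{0,\mathcal I}))$.

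The main point needing care is the uniform-tail step, where the pointwise statement of Theorem~\ref{thm:ideal-dini} must be turned into norm control of the $\alpha_n^*$. This works because the supremum over $x\in B_X$ commutes with the supremum over $n\notin A$. The remaining steps are routine.
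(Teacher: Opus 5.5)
Your proof is correct. The isometry and surjectivity parts match the paper's, but the second half takes a different route.

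\textbf{The inclusion.} For $\Psi(K(X,c_{0,\mathcal I}))\subseteq c_{0,\mathcal I}(X^*)$, the paper does not cite Theorem~\ref{thm:ideal-dini}. Instead it reruns a total-boundedness argument inline: it takes an $\varepsilon/3$-net $y^1,\ldots,y^n$ of $T(B_X)$ and uses the sets $\{k:|y^j_k|\geq 2\varepsilon/3\}\in\mathcal I$. That argument is essentially the uniform ideal-tail principle reproved in a special case. Citing Theorem~\ref{thm:ideal-dini}, as you do, is cleaner, and it is exactly how the paper later proves condition (i) of Theorem~\ref{thrm:compact-operators}.

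\textbf{Strictness.} The paper takes a normalized basic sequence $(x_m)$ in $X$ and Hahn--Banach extensions $x_k^*$ of its coordinate functionals, placed along $A=\{n_1<n_2<\cdots\}$. It then shows non-compactness directly: $\pi_{n_k}(Tx_m)=\delta_{km}$, so the bounded sequence $(x_m)$ is sent to a $1$-separated sequence. You instead place a $\tfrac12$-separated sequence from the unit sphere of $X^*$ (obtained by Riesz's lemma) along $A$. You then rule out compactness through Schauder's theorem, using $\{\alpha_n^*\}=\{T^*\pi_n\}\subseteq T^*(B_{c_{0,\mathcal I}^*})$.

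\textbf{What each route buys.} Your version avoids Mazur's basic-sequence theorem. It also identifies the actual obstruction: failure of relative norm compactness of $\{\alpha_n^*\}$, which is the necessity of condition (ii) in Theorem~\ref{thrm:compact-operators}. The paper's version is more concrete: it produces an explicit bounded sequence whose image has no convergent subsequence, without passing to adjoints.
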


\begin{proof}
  By Remark \ref{the map between B and cI}, the map $\Psi$ is well defined and continuous.  It remains to check that $\Psi$ is onto: If $(x_n^*)\in c_{0,\mathcal I}^{w^*}(X^*)$, we define $T\colon X\to c_{0,\mathcal I}$
    by $Tx=(x_n^*(x))$ for $x\in X$. Notice that $T$ is well defined, linear, continuous and $\Psi(T)=(x_n^*)$. 

   We will show that if  $T\in K(X,c_{0,\mathcal I})$, then $\Psi(T)=(\alpha_n^*)\in c_{0,\mathcal I}(X^*)$.

        Since $T\in K(X,c_{0,\mathcal I})$, $T(B_X)$ is a relative compact subset of $c_{0,\mathcal I}$. Let $\varepsilon>0$ be given. Then, there are $y^1,\ldots,y^n\in c_{0,\mathcal I}$ such that $T(B_X)\subseteq\bigcup_{1\leq j\leq n}B(y^j,\varepsilon/3)$. For each $j\in\{1,\ldots,n\}$, $A_j=\{n\in\mathbb N\,\colon\,|y_n^j|\geq\frac{2\varepsilon}{3}\}\in\mathcal I$. Thus, $A=A_1\cup\cdots\cup A_n\in\mathcal I$. Now, fix $Tx=(y_n)\in T(B_X)$. Hence, there is $j\in\{1,\ldots,n\}$ such that $\|y-y^j\|<\varepsilon.$ If $n\not\in A$, then $ |y_n|\leq|y_n-y_n^j|+|y_n^j|<\dfrac{\varepsilon}{3}+\dfrac{2\varepsilon}{3}=\varepsilon.$  Consequently,
        \begin{equation*}
            \|\alpha_n^*\|=\sup_{x\in B_X}|\alpha_n^*(x)|= \sup_{x\in B_X}|\pi_n(Tx)|=\sup_{y\in T(B_X)}|y_n|<\varepsilon.
        \end{equation*}
        It follows that $\{n\in\mathbb N\,\colon\,\|\alpha_n^*\|\geq\varepsilon\}\subseteq A\in\mathcal I$.   

        Suppose that $X$ is infinite dimensional and let $A\in\mathcal I\setminus\fin$. Let $(x_n)$ be a normalized basic sequence in $X$ and let $(x_n^*)$ be the sequence of Hahn-Banach extensions of the coordinate functionals associated to $(x_n)$.

        Write $A=\{n_1<n_2<\cdots n_k<\cdots\}$. Define
        \begin{equation*}
            y_n^*=\begin{cases}
                x_k^*, & n\in A\text{ and $n=n_k$ for some $k\in\mathbb N$;}\\
                0, & n\not\in A.
            \end{cases}
        \end{equation*}
        Notice that for each $\varepsilon>0$, $\{n\in\mathbb N\,\colon\,\|y_n^*\|\geq\varepsilon\}\subseteq A\in\mathcal I$. Let $T\colon X\to c_{0,\mathcal I}$ be given by
        $T(x)=(y_n^*(x))$, for all $x\in X$. We claim that $T\not\in K(X,c_{0,\mathcal I})$. Indeed, $\pi_{n_k}(Tx_m)=x_k^*(x_m)=\delta_{km}$ for each $k,m\in\mathbb N$.
        It follows that $\|Tx_k-Tx_m\|\geq 1$ for each $k\neq m$. Thus, $T\not\in K(X,c_{0,\mathcal I})$.
\end{proof}

\begin{theorem}\label{thrm:compact-operators}
Let $\I$ be an ideal on $\mathbb N$, $X$ be a Banach space, and $(\alpha_n^*)\in c_{0,\I}^{w^*}(X^*)$. Define
\[
 T:X\longrightarrow c_{0,\mathcal I},
 \qquad
 T(x)=(\alpha_n^*(x))_{n\in\omega},\quad x\in X.
\]
Then, $T$ is compact if and only if the following conditions hold:
\begin{enumerate}[label=\textup{(\roman*)}]
\item $(\|\alpha_n^*\|)_{n\in\omega}\in c_{0,\mathcal I}$; and
\item $\{\alpha_n^*:n\in\omega\}$ is relatively norm compact in $X^*$.
\end{enumerate}
\end{theorem}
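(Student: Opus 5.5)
The proof splits into the two implications. For necessity, condition (i) is exactly what the previous theorem shows: when $T$ is compact, $\Psi(T)=(\alpha_n^*)\in c_{0,\mathcal I}(X^*)$, i.e.\ $(\|\alpha_n^*\|)\in c_{0,\mathcal I}$. Alternatively, (i) follows from Theorem~\ref{thm:ideal-dini} applied to $L=\overline{T(B_X)}$, since $\|\alpha_n^*\|=\sup_{y\in T(B_X)}|y_n|=a_{T(B_X)}(n)$. For (ii), Schauder's theorem gives that $T^*\colon c_{0,\mathcal I}^*\to X^*$ is compact. Writing $\delta_n\in c_{0,\mathcal I}^*$ for the $n$-th coordinate functional, we have $\|\delta_n\|\le 1$ and $T^*\delta_n=\pi_n\circ T=\alpha_n^*$. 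Hence $\{\alpha_n^*\}\subseteq T^*(B_{c_{0,\mathcal I}^*})$ is relatively norm compact.

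For sufficiency, we verify condition (4) of Theorem~\ref{thm:compactness-c0I} for $K=T(B_X)$. Boundedness is clear, since $\|T\|=\sup_n\|\alpha_n^*\|<\infty$. Fix $\varepsilon>0$ and let $A=\{n:\|\alpha_n^*\|\ge\varepsilon/2\}$, which lies in $\mathcal I$ by (i). For $n\notin A$ and $x\in B_X$ we get $|\alpha_n^*(x)|<\varepsilon/2<\varepsilon$, which gives the first requirement of (4).

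For the partition of $A$, we use total boundedness from (ii). Choose $n_1,\dots,n_m\in A$ with $\{\alpha_n^*:n\in A\}\subseteq\bigcup_j B(\alpha_{n_j}^*,\varepsilon/3)$. If $A$ is empty there is nothing to do; otherwise centres from the set itself exist, since a totally bounded set admits finite $\delta$-nets inside it, up to adjusting the radius. Set $A_j=\{n\in A:\|\alpha_n^*-\alpha_{n_j}^*\|<\varepsilon/3\}\setminus\bigcup_{i<j}A_i$. Discarding the empty pieces gives a finite partition of $A$. For $n,k\in A_j$ and $x\in B_X$,
\[
|\alpha_n^*(x)-\alpha_k^*(x)|\le\|\alpha_n^*-\alpha_k^*\|<2\varepsilon/3<\varepsilon .
\]
This is the second requirement of (4), so $T(B_X)$ is relatively compact and $T$ is compact.

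An alternative route to sufficiency is condition (2) of Theorem~\ref{thm:compactness-c0I}: $a_K(n)=\|\alpha_n^*\|$ lies in $c_{0,\mathcal I}$, and $P_A(K)$ is relatively compact by applying Lemma~\ref{lem:finite-partition} with the same partition. The only step needing some care is the necessity of (ii), which depends on identifying $T^*\delta_n=\alpha_n^*$ through the duality of Theorem~\ref{thm:dual-ba}. Here $\delta_n$ corresponds to the Dirac measure at $n$, and $\delta_n(\mathbf y)=y_n$ for $\mathbf y\in c_{0,\mathcal I}$. The rest is routine.
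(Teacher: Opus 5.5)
Your proof is correct and follows the paper's argument. Necessity is the same: the ideal-tail principle (or the preceding theorem) gives (i), and Schauder's theorem with $T^*\pi_n=\pi_n\circ T=\alpha_n^*$ gives (ii), with no need to go through Theorem~\ref{thm:dual-ba}. Sufficiency uses the same partition of $A=\{n:\|\alpha_n^*\|\ge\varepsilon\}$ built from a finite net of $\{\alpha_n^*:n\in A\}$. The only difference is packaging: the paper turns that partition directly into the finite-rank operator $Sx=\sum_j x_j^*(x)\chi_{A_j}$ with $\|T-S\|\le\varepsilon$. You instead feed it into condition (4) of Theorem~\ref{thm:compactness-c0I}, whose proof builds essentially the same approximation.
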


\begin{proof}
If $T$ is compact, apply Theorem~\ref{thm:ideal-dini} to
$\overline{T(B_X)}$.  For every $\varepsilon>0$, there is 
$A\in\mathcal I$ such that
\[
 \sup_{n\notin A}\|\alpha_n^*\|
 =\sup_{x\in B_X}\sup_{n\notin A}|\alpha_n^*(x)|
 <\varepsilon.
\]
Thus, $\{n\in\omega:\|\alpha_n^*\|\geq\varepsilon\}\subseteq A$, and (i) holds.  Observe that
$\alpha_n^*=T^*\pi_n$ for all $n\in\omega$. By Schauder's theorem, $T^*$ is compact, and
(ii) follows.

Conversely, fix $\varepsilon>0$ and put $A=\{n\in\omega\,\colon\,\|\alpha_n^*\|\geq\varepsilon\}\in\mathcal I.$ By (ii), choose $x^*_1,\ldots,x^*_m\in X^*$ such that every $\{\alpha_n^*\,\colon\,n\in A\}\subseteq\bigcup_{1\leq j\leq m}B(x_j^*,\varepsilon)$. Let $A_j=\{n\in A\,\colon\,\|\alpha_n^*-x_j^*\|<\varepsilon\}$, where $j=1,\ldots,m$.
Clearly, $A=\bigcup_{1\leq j\leq m}A_j$. Without loss of generality, we may assume that the sets $A_i$ are pairwise disjoint. Let
\[
S:X\to c_{0,\mathcal I},\quad Sx=\sum_{j=1}^mx^*_j(x)\chi_{A_j}, \quad x\in X.
\]
The operator $S$ has finite rank and
$\|T-S\|\leq\varepsilon$.  Hence $T$ is compact.
\end{proof}

\subsection{Sobczyk Theorem for \texorpdfstring{$c_{0,\mathcal I}$}{c0I} spaces}

The pursuit of generalizations of Sobczyk’s Theorem has been an active area of research over the past decades (\cite{argyros, correa2014compact, correa2013extensions,correa2015c0, molto, patterson, sanchez2006yet,atob2000sobczyk}). In the aforementioned papers, distinct notions have emerged in connection with Sobczyk’s Theorem, for instance:  when a pair of Banach spaces $Y\subseteq X$ satisfies the conclusion of Sobczyk's Theorem, we say that $Y$ has the $c_0$-extension property ($c_0$-EP) in $X$ (see \cite{correa2013extensions,correa2015c0}), in \cite{VT} the $c_0(I)$-EP is defined when $c_0$ is replaced by its non-separable version in the Sobczyk's theorem, from an homological standpoint, the classical Sobczyk's theorem says that $c_0$ is a Banach space that is separably injective.

\medskip

According to \cite[Definitions 1.1 and 2.1]{aviles-et-all}, a Banach space $E$ is
\begin{itemize}
    \item \textit{injective} if for every Banach space $X$ and every subspace $Y$ of $X$, each operator $T\in B(Y,E)$ admits an extension $\widetilde T\in B(X,E)$. 

    \item \textit{separably injective} if for every separable Banach space $X$ and every subspace $Y$ of $X$, each operator $T\in B(Y,E)$ admits an extension $\widetilde T\in B(X,E)$. 

    \item \textit{universally separably injective} if for every Banach space $X$ and every separable subspace $Y$ of $X$, each operator $T\in B(Y,E)$ admits an extension $\widetilde T\in B(X,E)$.
\end{itemize}

In each case, for $\lambda>0$, the corresponding $\lambda$-notion requires that the extension can always be chosen with $\|\widetilde T\|\leq\lambda\|T\|$.

 We will show that $c_{0,\mathcal I}$ is 2-separably injective and $\ell_\infty/c_{0,\mathcal I}$ is $1$-universally separably injective.
 
\begin{theorem}\label{Prop: c0I is separably injective}
    Let $\mathcal I$ be an ideal on $\mathbb N$. Then,  
    \begin{enumerate}
        \item $c_{0,\mathcal I}$ is 2-separably injective.
        \item $\ell_\infty/c_{0,\mathcal I}$ is $1$-universally separably injective.
    \end{enumerate}
\end{theorem}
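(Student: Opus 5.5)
For (1) I would give a direct Sobczyk-type argument, with the ideal cluster points of a bounded sequence taking the place of ordinary weak$^*$ cluster points. For (2) I would verify the intersection property of balls in $\ell_\infty/c_{0,\mathcal I}$.

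\emph{Part (1).} Let $X$ be separable, $Y\subseteq X$ a closed subspace and $T\in B(Y,c_{0,\mathcal I})$. By Remark~\ref{the map between B and cI}, $T$ corresponds to $(y_n^*)\in c^{w^*}_{0,\mathcal I}(Y^*)$, meaning $\mathcal I\text{-}\lim_n y_n^*(y)=0$ for all $y\in Y$, and $\|y_n^*\|\le\|T\|$. Take Hahn--Banach extensions $x_n^*\in X^*$ with $\|x_n^*\|\le\|T\|$. Put $B=\|T\|B_{X^*}$; by separability of $X$ it is weak$^*$ compact and metrizable, with metric $d$. Let $Z=B\cap Y^\perp$, which is weak$^*$ compact.

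The key claim is that for every $\varepsilon>0$ the set $E_\varepsilon=\{n: d(x_n^*,Z)\ge\varepsilon\}$ belongs to $\mathcal I$. Suppose instead $E_\varepsilon\in\mathcal I^+$. Call $p\in B$ an $\mathcal I$-cluster point of $(x_n^*)_{n\in E_\varepsilon}$ if $\{n\in E_\varepsilon: x_n^*\in V\}\in\mathcal I^+$ for every weak$^*$ neighbourhood $V$ of $p$.
\begin{itemize}
\item Such a $p$ exists. Otherwise every point of the compact set $\{q\in B: d(q,Z)\ge\varepsilon\}$ has a neighbourhood whose index set lies in $\mathcal I$. A finite subcover would then put $E_\varepsilon$ in $\mathcal I$, a contradiction.
\item Such a $p$ lies in $Y^\perp$. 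For $y\in Y$ and $\delta>0$, the index set of $V=\{q:|q(y)-p(y)|<\delta\}$ is $\mathcal I$-positive, while $\{n:|x_n^*(y)|\ge\delta\}\in\mathcal I$. Intersecting the two sets gives $|p(y)|<2\delta$.
\item Such a $p$ satisfies $d(p,Z)\ge\varepsilon$, because this is a closed condition satisfied by all the $x_n^*$, $n\in E_\varepsilon$.
\end{itemize}
The last two points contradict each other, since $p\in B\cap Y^\perp=Z$. This step is the heart of the argument.

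Now choose $w_n^*\in Z$ with $d(x_n^*,w_n^*)\le 2d(x_n^*,Z)+2^{-n}$. By the claim, $d(x_n^*,w_n^*)\to0$ along $\mathcal I$. Since the metric $d$ induces the weak$^*$ topology on $B$, this gives $\mathcal I\text{-}\lim_n (x_n^*-w_n^*)(x)=0$ for every $x\in X$. Define $\widetilde T x=(x_n^*(x)-w_n^*(x))_n$. Then $\widetilde T$ maps into $c_{0,\mathcal I}$, satisfies $\|\widetilde T\|\le 2\|T\|$, and extends $T$ because $w_n^*|_Y=0$.

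\emph{Part (2).} I would use the characterization in \cite{aviles-et-all}: a Banach space is $1$-universally separably injective if and only if every countable family of pairwise intersecting closed balls has nonempty intersection. I work over the reals, in line with the paper's conventions. First check that the quotient norm is $\|x+c_{0,\mathcal I}\|=\mathcal I\text{-}\limsup_n|x_n|:=\inf\{t:\{n:|x_n|>t\}\in\mathcal I\}$. This is routine: truncate $x$ on the exceptional set.

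Let balls $B(x^k+c_{0,\mathcal I},r_k)$, $k\in\mathbb N$, pairwise intersect. Then $\mathcal I\text{-}\limsup_n|x^k_n-x^j_n|\le r_k+r_j$ for all $k,j$. Set
\[
F_N=\{n:|x^k_n-x^j_n|\le r_k+r_j+1/N\ \text{for all } k,j\le N\},
\]
so that $\mathbb N\setminus F_N\in\mathcal I$. For each $n$ let $N(n)=\max\{N\le n: n\in F_1\cap\cdots\cap F_N\}$, taken as $0$ if no such $N$ exists. The intervals $[x^k_n-r_k-1/N(n),\,x^k_n+r_k+1/N(n)]$, $k\le N(n)$, pairwise intersect, so by Helly's theorem in $\mathbb R$ they have a common point; let $x_n$ be such a point, and $x_n=0$ when $N(n)=0$. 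The sequence $x$ is bounded. Fix $k$ and $N\ge k$. Then $\{n: N(n)<N\}$ is contained in a finite set union $\bigcup_{M\le N}(\mathbb N\setminus F_M)$, which lies in $\mathcal I$. Outside this set $|x_n-x^k_n|\le r_k+1/N$. Hence $\|x-x^k+c_{0,\mathcal I}\|\le r_k$ for every $k$, and $x+c_{0,\mathcal I}$ lies in all the balls.

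The main obstacle is the cluster-point claim in part (1). One alternative for part (2) would be to use $C(K_{\mathcal I})$ from Proposition~\ref{isometrias} together with F-space results. I avoid that route because it is unclear whether $K_{\mathcal I}$ is an F-space, and the ball-intersection argument does not need it.
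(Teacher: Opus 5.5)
Your Part (1) is correct. It is Veech's proof of Sobczyk's theorem, run with $\mathcal I$-cluster points. The passage from $d(x_n^*,w_n^*)\to0$ along $\mathcal I$ to $(x_n^*-w_n^*)(x)\to0$ along $\mathcal I$ uses uniform continuity of $u\mapsto u(x)$ on the compact metric space $(B,d)$. It is a self-contained substitute for the paper's argument, which only transfers a result of Avil\'es et al.\ through $c_{0,\mathcal I}\equiv C_0(U_{\mathcal I})$. Compared with the paper's classical proof in Theorem~\ref{thm:sobczyk}, yours is shorter, but it uses metrizability of $B_{X^*}$ and so needs $X$ separable, not merely $X/Y$.

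Part (2), however, has a genuine gap in its last step. The property you verify is that every countable family of pairwise intersecting closed balls has a common point. That is Lindenstrauss's characterization of \emph{1-separable} injectivity, not of 1-universal separable injectivity. It lets you extend a norm-one operator from a separable domain $Z$ to $Z+\mathbb R x$, by intersecting the balls $B(Sz_j,\|x-z_j\|)$ with $(z_j)$ dense in $Z$. Hence you can extend to every separable superspace, and by an $\omega_1$-recursion to superspaces of density $\aleph_1$. But universal separable injectivity concerns arbitrary $X$, e.g.\ $X=\ell_\infty$ when $\mathfrak c>\aleph_1$. Once the domain of a partial extension is nonseparable, the next step needs uncountably many balls to meet, and the countable property does not give that. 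This is why Lindenstrauss obtained ``1-separably injective $\Rightarrow$ 1-universally separably injective'' only under CH. Avil\'es, Cabello S\'anchez, Castillo, Gonz\'alez and Moreno constructed a consistent example of a 1-separably injective $C(K)$ that is not 1-universally separably injective. So, as written, you have only shown that $\ell_\infty/c_{0,\mathcal I}$ is 1-separably injective. Your worry about $K_{\mathcal I}$ is unfounded: it is closed in $\beta\mathbb N$, hence an F-space. But the F-space property yields the same weaker conclusion, so that route would not help either.

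The missing ingredient is specific to this space. $\ell_\infty/c_{0,\mathcal I}$ is a quotient of the 1-injective space $\ell_\infty$, and operators from separable spaces lift through the quotient map without increasing the norm. Let $Y\subseteq X$ be separable, $X$ arbitrary, and $\|T\|=1$. Identify $\ell_\infty/c_{0,\mathcal I}$ with $C(K_{\mathcal I})$ by restricting Stone--\v{C}ech extensions (Proposition~\ref{isometrias}). Then $k\mapsto T^*\delta_k$ is a continuous map from $K_{\mathcal I}$ into the weak$^*$ compact, convex, metrizable set $B_{Y^*}$. This set is affinely homeomorphic to a compact convex subset of $\mathbb R^{\mathbb N}$, hence a retract of $\mathbb R^{\mathbb N}$ by Dugundji. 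By Tietze, the map therefore extends to $\Phi\colon\beta\mathbb N\to B_{Y^*}$. Then $u_n^*=\Phi(n)$ satisfies $(u_n^*(y))_n+c_{0,\mathcal I}=Ty$ for every $y\in Y$. Extend each $u_n^*$ by Hahn--Banach to $x_n^*\in B_{X^*}$ and put $\widetilde Tx=(x_n^*(x))_n+c_{0,\mathcal I}$. This is a norm-one extension of $T$ defined on all of $X$. The $u_n^*$ can also be obtained with your Part (1) technique: for each ultrafilter $p\supseteq\mathcal I^*$, the map $y\mapsto p\text{-}\lim_n r(y)_n$, with $r(y)$ any representative of $Ty$, is a functional in $B_{Y^*}$. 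A diagonal choice as in your definition of $N(n)$ then gives the lift.
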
 

\begin{proof}
(1) Since $\ell_\infty\cong C(\beta\mathbb N)$ is 1-injective, from \cite[Theorem 2.18]{aviles-et-all}
    we obtain that $C_0(U_\mathcal I)$ is 2-separably injective. By Proposition \ref{isometrias}
    we conclude that $c_{0,\mathcal I}$ is 2-separably injective.

(2) By \cite[Theorem 2.18]{aviles-et-all}, $C(K_\mathcal I)$ is 1-universally separably injective. It follows from Proposition \ref{isometrias}
    that $\ell_\infty/c_{0,\mathcal I}$ is 1-universally separably injective.
\end{proof}

Although we already know that the spaces $c_{0,\mathcal I}$ are 2-separably injective, we would like to present a classical proof of Theorem~\ref{Prop: c0I is separably injective} without passing through the isometric identification with $C_0(U_\mathcal I)$. 

\begin{theorem}[Sobczyk's Theorem for ideals]\label{thm:sobczyk}
Let $Y$ a closed subspace of Banach space $X$ with $X/Y$ separable. Every bounded operator $T:Y\longrightarrow c_{0,\mathcal I}$ admits an extension $\widetilde T:X\longrightarrow c_{0,\mathcal I}$ such that $\|\widetilde T\|\leq2\|T\|$. In particular, $c_{0,\mathcal I}$ is $2$-separably injective.
\end{theorem}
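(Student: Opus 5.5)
We follow Veech's classical proof of Sobczyk's theorem, replacing ordinary convergence by $\mathcal I$-convergence along ultrafilters $p\supseteq\mathcal I^*$. By Remark~\ref{the map between B and cI}, $T$ corresponds to the sequence $y_n^*\doteq\pi_n\circ T\in Y^*$. This sequence satisfies $\sup_n\|y_n^*\|=\|T\|$ and $\mathcal I\text{-}\lim_n y_n^*(y)=0$ for every $y\in Y$. Conversely, any $(w_n^*)\in c_{0,\mathcal I}^{w^*}(X^*)$ with $w_n^*|_Y=y_n^*$ and $\sup_n\|w_n^*\|\le 2\|T\|$ defines the desired extension $\widetilde Tx=(w_n^*(x))_n$. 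So the task is to correct Hahn--Banach extensions so that they become pointwise $\mathcal I$-null on all of $X$, while losing at most a factor $2$ in norm.

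\textbf{Construction.} Let $x_n^*\in X^*$ be Hahn--Banach extensions of $y_n^*$ with $\|x_n^*\|=\|y_n^*\|\le\|T\|$. Since $X/Y$ is separable, pick $(x_k)$ in $X$ such that $\{x_k+Y\}$ is dense in $X/Y$. On $K\doteq\|T\|B_{X^*}$ define the pseudometric
\[
\rho(f,g)\doteq\sum_k 2^{-k}\frac{|f(x_k)-g(x_k)|}{1+\|x_k\|},
\]
which is weak$^*$-continuous on $K$. Put $Z\doteq Y^\perp\cap K$, a weak$^*$-compact set. For each $n$ choose $z_n^*\in Z$ with
\[
\rho(x_n^*,z_n^*)\le\dist_\rho(x_n^*,Z)+1/n .
\]
Set $w_n^*\doteq x_n^*-z_n^*$. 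Then $w_n^*|_Y=y_n^*$ and $\|w_n^*\|\le 2\|T\|$.

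\textbf{Key step: $\mathcal I\text{-}\lim_n w_n^*(x_k)=0$ for each $k$.} This is the main obstacle, and we argue by contradiction. Suppose that for some $k$ and $\varepsilon>0$ the set $B=\{n:|w_n^*(x_k)|\ge\varepsilon\}$ belongs to $\mathcal I^+$. Then $\mathcal I^*\cup\{B\}$ has the finite intersection property, so there is an ultrafilter $p$ with $B\in p$ and $\mathcal I^*\subseteq p$; in particular $p$ is nonprincipal. Let $x^*\doteq p\text{-}\lim x_n^*$ in the weak$^*$ topology of $K$.
\begin{itemize}
\item For $y\in Y$ we have $x^*(y)=p\text{-}\lim y_n^*(y)=0$, because $\mathcal I$-convergence implies convergence along $p\supseteq\mathcal I^*$. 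Hence $x^*\in Z$.
\item By weak$^*$ continuity of $\rho$, $p\text{-}\lim_n\rho(x_n^*,x^*)=0$, and therefore $p\text{-}\lim_n\dist_\rho(x_n^*,Z)=0$.
\item Since $p$ is nonprincipal, $p\text{-}\lim 1/n=0$. It follows that $p\text{-}\lim_n\rho(x_n^*,z_n^*)=0$, and in particular $p\text{-}\lim_n|w_n^*(x_k)|=0$.
\end{itemize}
This contradicts $B\in p$.

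\textbf{Passage to all of $X$.} Given $x\in X$ and $\eta>0$, density of $\{x_k+Y\}$ in $X/Y$ lets us write $x=x_k+y+e$ with $y\in Y$ and $\|e\|<\eta$. Then
\[
|w_n^*(x)|\le |w_n^*(x_k)|+|y_n^*(y)|+2\|T\|\eta .
\]
The first two terms are $\mathcal I$-null, by the key step and by the hypothesis on $(y_n^*)$ respectively. Since $\eta>0$ is arbitrary, $\mathcal I\text{-}\lim_n w_n^*(x)=0$. Hence $(w_n^*)\in c_{0,\mathcal I}^{w^*}(X^*)$, and $\widetilde T$ is the required extension with $\|\widetilde T\|\le2\|T\|$. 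Taking $X$ separable gives $2$-separable injectivity.
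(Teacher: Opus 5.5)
Your proof is correct and follows essentially the same Veech-type strategy as the paper. Both take Hahn--Banach extensions and subtract near-best approximants from $Y^\perp\cap\|T\|B_{X^*}$ measured on a countable test set. Both use an ultrafilter $p\supseteq\mathcal I^*$ to force the weak$^*$ cluster point into $Y^\perp$. The only difference is bookkeeping: you encode all test vectors in one weak$^*$-continuous pseudometric and allow slack $1/n$, using that $p$ is nonprincipal. The paper instead proves that the level sets $D_m$ lie in $\mathcal I$ and selects its approximants through the diagonal index $r(n)$.
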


\begin{proof}
Denote $T(y)=\bigl(y_n^*(y)\bigr)_{n\in\omega}$, for $y\in Y$, where $y_n^*\in Y^*$, $\|y_n^*\|\leq \|T\|$ with $\mathcal I\text{-}\lim_n y_n^*(y)=0$, for each $y\in Y$.

By the Hahn-Banach extension theorem, for each $n\in\omega$ there is an extension $z_n^*\in X^*$ with $z_n^*|_Y=y_n^*$, and $\|z_n^*\|\leq M$. Since $X/Y$ is separable, let $(x_j)_{j\geq 1}\subseteq X$ such that $\spn\{x_j+Y:j\geq1\}$ is dense in $X/Y$. Define $C\doteq\{u\in X^*:u|_Y=0,\ \|u\|\leq M\}$ and, for each $m\geq1$:
\[
D_m\doteq\left\{n\in\omega:
\inf_{u\in C}\max_{1\leq j\leq m}
|(z_n^*-u)(x_j)|\geq\frac1m
\right\}.
\]

\begin{claim}
    $D_m\in\mathcal I$.

Suppose towards a contradiction that $D_m\notin \mathcal I$. Then there exists an ultrafilter $p\in\beta\omega$ such that $\mathcal I^*\cup\{D_m\}\subseteq p$. By the weak*-compactness of $B_{X^*}$ we may choose $z^*=w^*\text{-}\lim_p z_n^*$.

Observe that for every $y\in Y$, $z^*(y)=\lim_p y_n^*(y)=0,$ because $p\supseteq\mathcal I^*$ and $\mathcal I\text{-}\lim y_n^*(y)=0$. Therefore, $z^*\in C$. On the other hand, given the weak* basic open $\mathcal U \doteq U(z^*;x_1,\ldots,x_m;1/m)$ and $A_m=\{n\in\omega:z_n^*\in\mathcal U \}$, one can show that $D_m\subseteq \N\setminus A_m$ with $A_m\in p$, a contradiction. Thus, $D_m\in\mathcal I$.
\end{claim}

For every $m\in\omega$, consider $E_m\doteq\bigcup_{k\leq m}D_k\in\mathcal I$ and define:
\[
r(n)\doteq \begin{cases}
    0, & \text{ if } \{m\leq n:n\notin E_m\} =\emptyset \\
    \max\{m\leq n:n\notin E_m\}, & \text{otherwise}.
\end{cases}
\] 
For each $k\in \omega$, we have that $\{n\in\omega\,\colon\,r(n)<k\}\subseteq[0,k)\cup E_k\in\mathcal I$. Indeed, if $n\geq k$ and $r(n)<k$, then $k\in E_n$.

If $r(n)>0$, $n\notin D_{r(n)}$ and we can choose $u_n^*\in C$ such that $\max_{1\leq j\leq r(n)} |(z_n^*-u_n^*)(x_j)|<\frac1{r(n)}.$ We define the functionals that will extend $T$ as follows:

\[
x_n^* = \begin{cases}
    z_n^*,& \text{if } r(n)=0 \\
    z_n^* -u_n^* ,& \text{if } r(n)>0.
\end{cases}
\]

\begin{claim}
    For each $j=1,\ldots,m$, it follows that $\mathcal I\text{-}\lim_n x_n^*(x_j)=0.$

Fix $j\in\{1,\ldots,m\}$ and let $\varepsilon>0$ be given. Choose $k\in\mathbb N$ such that $k>j$ and $1/k<\varepsilon.$ If $n\in\mathbb N$ and $r(n)\geq k$, since 
$\max_{1\leq i\leq r(n)}
|(z_n^*-u_n^*)(x_i)|<\frac1{r(n)}$ we have that $|x_n^*(x_j)|<1/r(n)\leq 1/k<\varepsilon.$ Consequently, $\{n\in\mathbb N\,\colon\,|x_n^*(x_j)|\geq\varepsilon\}\subseteq\{n\in\mathbb N\,\colon\,r(n)<k\}\in\mathcal I$ and the claim is finished.    
\end{claim}

Notice that, by the definition of $T$, we also have $x_n^*(y)=y_n^*(y) \xrightarrow{\,\mathcal I\,}0$.

\begin{claim}
   For each $x\in X$ we have that $\mathcal I$-$\lim x_n^*(x)=0.$

Given $x\in X$ and $\varepsilon>0$, by density there exists a linear combination such that
\[\left\| x+Y-\sum_{j=1}^m \alpha_j(x_j+Y)\right\|_{X/Y}<\dfrac{\varepsilon}{6\|T\|}.\]

Choose $y\in Y$ such that $\left\| x-\sum_{j=1}^m \alpha_ix_i+y\right\|<\dfrac{\varepsilon}{6\|T\|}$. Observe that:
\begingroup
\allowdisplaybreaks
\begin{align*}
    |x_n^*(x_j)|&= \left|x_n^*\left(  x-  \sum_{j=1}^m \alpha_jx_j  \right) + 
    x_n^*\left(\sum_{j=1}^m \alpha_jx_j \right)\right| \\
    &\leq \left|x_n^*\left(  x-  \sum_{j=1}^m \alpha_jx_j +y \right) - y_n^*(y)  \right| 
        +\left| x_n^*\left(\sum_{j=1}^m \alpha_jx_j \right) \right| \\
    &\leq 2\|T\| \left\| x-  \sum_{j=1}^m \alpha_jx_j +y \right\| + |y_n^*(y)| +
        \left| x_n^*\left(\sum_{j=1}^m \alpha_jx_j \right) \right| \\
    &< \frac{\varepsilon}{3} + |y_n^*(y)| +
        \left| x_n^*\left(\sum_{j=1}^m \alpha_jx_j \right) \right|.
\end{align*}
\endgroup

The above inequality imply that
\begin{equation*}
    \{n\in\omega\,\colon\,|x_n^*(x)|\geq\varepsilon\}\subseteq\{n\in\omega\,\colon\,|y_n^*(y)|\geq\varepsilon/3\}\cup\left\{n\in\omega\,\colon\,\left|x_n^*\left(\sum_{j=1}^m \alpha_jx_j \right)\right|\geq\varepsilon/3\right\}.
\end{equation*}
Therefore, $\mathcal I\text{-}\lim_n x_n^*(x)=0.$ 
\end{claim}

Thus, $\widetilde T: X\rightarrow c_{0,\mathcal I}$ given by $\widetilde T(x)=\bigl(x_n^*(x)\bigr)_{n\in\omega}$ is an extension of $T$ with $\|\widetilde T\|\leq2M$.
\end{proof}

$Y\leq X$ means: $Y$ is closed subspace of the Banach space $X$.

\begin{definition}\label{def: comp and si constant}
Given a Banach space $X$, we define the \emph{constant of separable-injectivity} of $X$:
\[
\lambda_{\mathrm{si}}(X) \doteq
\inf\left\{ \lambda>0 \,\,\colon\,\,
\begin{aligned}
&\forall F\leq E,\ E \text{ separable},\  \forall T\in B(F,X),\\
&\exists \widetilde T\in B(E,X),\ \widetilde T|_F=T,\ 
    \|\widetilde T\|\leq\lambda\|T\|
\end{aligned}
\right\}.
\]

Fixed $Y\leq X$, the \emph{constant of complementation of $Y$ in $X$} is given by:
\[
\lambda_{{\rm comp}}(Y,X) \doteq \inf\{ \|P\|:P:X\to Y\text{ is a projection}\},
\]
where the infima are taken in the extended line $\overline{\mathbb{R}}$.

When $X=\ell_\infty$, we write  $\lambda_{{\rm comp}}(Y,\ell_\infty)=\lambda_{{\rm comp}}(Y)$.
\end{definition}

The reader may notice the following alternative formula to compute the constant of complementation:
\[
\lambda_{{\rm comp}}(Y,X) = \inf\left\{ \|I_X- S\circ q\| \,\,\colon\,\, S\in B(X/Y, X),\  S\circ q=I_X\right\},
\]
where $q:X\to X/Y$ is the quotient map. 

In the therms of Definition~\ref{def: comp and si constant}, Sobczyk's Theorem~\ref{thm:sobczyk} can be simply rephrased as: $\lambda_{\mathrm{si}}(c_{0,\mathcal I})\leq 2$.

To prove the $\lambda_{\mathrm{si}}(c_{0,\mathcal I})\geq 2$, it suffices to consider the inclusion $c_0\subseteq c$, independently of the ideal $\mathcal I$, as it shows the next result:

\begin{theorem}[Sharpness of the Sobczyk constant]
\label{thm:sharp-sobczyk}
For every ideal $\mathcal I$, $ \lambda_{\mathrm{si}}(c_{0,\mathcal I})=2$.
\end{theorem}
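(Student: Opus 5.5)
The upper bound is already available: Theorem~\ref{thm:sobczyk} gives $\lambda_{\mathrm{si}}(c_{0,\mathcal I})\leq 2$. For the reverse inequality I will exhibit one separable pair $F\leq E$ and one operator $T\in B(F,c_{0,\mathcal I})$ with $\|T\|=1$ such that every extension has norm at least $2$. The choice is $E=c$ (separable), $F=c_0$, and $T=j\colon c_0\hookrightarrow c_{0,\mathcal I}$ the inclusion. This is well defined since $\fin\subseteq\mathcal I$, and it is isometric, so $\|T\|=1$. Throughout I take $\mathcal I$ proper, i.e.\ $\mathbb N\notin\mathcal I$, as in the rest of the paper; otherwise $c_{0,\mathcal I}=\ell_\infty$ would be $1$-injective.

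Let $\lambda>0$ be admissible in Definition~\ref{def: comp and si constant}. Then there is $\widetilde T\in B(c,c_{0,\mathcal I})$ with $\widetilde T|_{c_0}=j$ and $\|\widetilde T\|\leq\lambda$. Put $z=\widetilde T(\mathbf 1)\in c_{0,\mathcal I}$, where $\mathbf 1$ is the constant sequence. Fix $\varepsilon>0$. Since $z$ is $\mathcal I$-null, the set $\{n\colon |z_n|\geq\varepsilon\}$ belongs to $\mathcal I$. Because $\mathcal I$ is proper, this set is not all of $\mathbb N$, so there is some $n_0$ with $|z_{n_0}|<\varepsilon$.

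Now test $\widetilde T$ on $x=\mathbf 1-2e_{n_0}\in c$, which satisfies $\|x\|_\infty=1$. Using $\widetilde T(e_{n_0})=e_{n_0}$, which is the extension property, we get $\widetilde T x=z-2e_{n_0}$. The $n_0$-th coordinate of this vector has modulus $|z_{n_0}-2|>2-\varepsilon$. Hence
\[
\lambda\ \geq\ \|\widetilde T\|\ \geq\ \|\widetilde T x\|_\infty\ >\ 2-\varepsilon .
\]
Since $\varepsilon>0$ is arbitrary, $\lambda\geq 2$. Taking the infimum over admissible $\lambda$ gives $\lambda_{\mathrm{si}}(c_{0,\mathcal I})\geq 2$, and together with Theorem~\ref{thm:sobczyk} this yields equality.

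There is no serious obstacle in this argument. The one point needing care is that the coordinate $n_0$ must be chosen where $\widetilde T(\mathbf 1)$ is small. This is exactly where properness of $\mathcal I$ replaces the classical fact that $\widetilde T(\mathbf 1)\in c_0$ has small tail coordinates.
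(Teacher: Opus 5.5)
Your proof is correct and follows the paper's argument: it uses the same pair $c_0\subseteq c$, the inclusion $c_0\hookrightarrow c_{0,\mathcal I}$, the vector $z=\widetilde T(\mathds{1})$, and properness of $\mathcal I$ to find a coordinate where $z$ is small. The only difference is cosmetic. The paper computes the exact norm $\|\widetilde T\|=1+\|\mathds{1}-z\|_\infty$ and shows $\dist(\mathds{1},c_{0,\mathcal I})=1$, whereas you evaluate $\widetilde T$ at the norm-one vector $\mathds{1}-2e_{n_0}$, which is slightly more elementary because it avoids computing $\|\pi_n+(z_n-1)L\|$ in $c^*$.
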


\begin{proof}
It remains to prove that $\lambda_{\mathrm{si}}(c_{0,\mathcal I})\geq 2$. Let $c$ be the space of convergent sequences, let $L:c\to\K$ be the functional given by $L\big((x_n)\big) = \lim_{n\to\infty} x_n$ and the inclusion operator $i:c_0\to c_{0,\mathcal I}$.  Suppose that $\widetilde T:c\to c_{0,\mathcal I}$
extends $i$, and set $z=\widetilde T(\mathds{1})$, where $\mathds{1}=(1,1,\ldots)$.  For every $x\in c$, 
\[
 \widetilde T(x)=\widetilde T\big( \underbrace{x-L(x) \mathds{1}}_{\in c_0}+L(x)\mathds{1}\big)=x-L(x)\mathds{1}+L(x)z.
\]
Consequently, for every $n$, $\pi_n \widetilde T=\pi_n+(z_n-1)L$ and it follows that:
\[
 \|\widetilde  T\|
 =\sup_n\|\pi_n \widetilde T\|
 =\sup_n\|\pi_n+(z_n-1)L\|
 =\sup_n\bigg(1+|1-z_n|\bigg)
 =1+\|\mathds{1}-z\|_\infty.
\]
Moreover, we will show that $ d\doteq \dist(\mathds{1},c_{0,\mathcal I})=1$. Indeed, to see that $d\leq 1$, take $z=0$.  Suppose towards a contradiction that $d<1$ and choose  $z\in c_{0,\mathcal I}$ such that $\|\mathds{1}-z\|_\infty<1$. Let $\alpha$ be such $\|\mathds{1}-z\|_\infty<\alpha<1$. It follows that $|z_n|\geq \varepsilon\doteq1-\alpha>0$ for every $n$, thus, $\omega=\{n:|z_n|\geq\varepsilon\}\in\mathcal I$, a contradiction. Thus $\|\widetilde T\|\geq 2$. 
\end{proof}

We end this section with a few results about the constant of complementation of the spaces $c_{0,\mathcal I}$. We believe that the following result is a general fact about complementation, which we include for the sake of completeness. To the best of our knowledge, we have not been able to locate an appropriate reference for it in the literature

\begin{proposition}[Projection lower bound and splitting formula]
\label{prop:projection-formula}
Suppose that $Y$ is closed proper subspace of $\ell_\infty$ containing $c_0$ and that $P:\ell_\infty\to Y$ is a projection. Denoting $Q=I_X-P$ and $q:X\to X/Y$ the quotient map, then $\|P\|=1+\|Q\|\ \geq2$ and
\[
 \lambda_{\mathrm{comp}}(Y) =1+\inf\left\{ \|S\|: S:\ell_\infty/ Y\to\ell_\infty, \ q\circ S=I_{\ell_\infty/ Y} \right\}.
\]
\end{proposition}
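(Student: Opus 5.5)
The plan is to read everything off the coordinate functionals of $P$ and $Q=I-P$. Since $P$ is a projection onto $Y$, we have $Q|_Y=0$, and in particular $Q$ vanishes on $c_0$. For each $n$ put $\varphi_n\doteq\pi_n\circ Q\in\ell_\infty^*$. Then
\[
\pi_n\circ P=\pi_n-\varphi_n \qquad\text{and}\qquad \varphi_n|_{c_0}=0 .
\]
As in Remark~\ref{the map between B and cI}, the norm of an operator into $\ell_\infty$ is the supremum of the norms of its coordinate functionals. Hence $\|P\|=\sup_n\|\pi_n-\varphi_n\|$ and $\|Q\|=\sup_n\|\varphi_n\|$.

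The key step is the identity
\[
\|\pi_n-\varphi_n\|=1+\|\varphi_n\| \qquad \text{for every } n .
\]
Under Theorem~\ref{thm:dual-splitting} with $\mathcal I=\mathcal P(\omega)$ (so $c_{0,\mathcal I}=\ell_\infty$), $\pi_n$ is the atomic part $\delta_n\in\ell_1$. Since $\varphi_n$ kills every $\chi_{\{k\}}$, it lies in $\ba_0$, and the $\oplus_1$ decomposition gives the identity at once.

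I would also indicate a direct argument. Given $\varepsilon>0$, pick $x\in B_{\ell_\infty}$ with $|\varphi_n(x)|>\|\varphi_n\|-\varepsilon$. Because $\varphi_n$ vanishes on $c_0$, changing the $n$-th coordinate of $x$ does not change $\varphi_n(x)$. Multiplying $x$ by a unimodular scalar so that $-\varphi_n(x)\geq 0$, and then setting $x_n\doteq1$, gives
\[
(\pi_n-\varphi_n)(x)\geq 1+\|\varphi_n\|-\varepsilon .
\]
Taking suprema over $n$ yields $\|P\|=1+\|Q\|$. Since $Y$ is proper, $Q$ is a nonzero projection (onto $\ker P$), so $\|Q\|\geq1$ and therefore $\|P\|\geq2$.

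For the formula for $\lambda_{\mathrm{comp}}(Y)$ I would set up a norm-preserving correspondence between projections onto $Y$ and right inverses of $q$.
\begin{enumerate}
\item Given a projection $P$, define $S(qx)\doteq Qx$. This is well defined because $Q|_Y=0$. It satisfies $q\circ S=I$, since $q(Qx)=q(x-Px)=qx$.
\item For this $S$ we have $\|S\|=\|Q\|$. For every $y\in Y$, $\|S(qx)\|=\|Q(x-y)\|\leq\|Q\|\,\|x-y\|$; taking the infimum over $y$ gives $\|S\|\leq\|Q\|$. Conversely $Qx=S(qx)$ and $\|q\|\leq 1$ give $\|Q\|\leq\|S\|$.
\item Conversely, given $S$ with $q\circ S=I$, put $P\doteq I-S\circ q$. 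Then $P|_Y=I$ because $\ker q=Y$, and $qP=q-qSq=0$, so $\operatorname{ran}P\subseteq Y$. Hence $P$ is a projection onto $Y$ with $Q=S\circ q$, and $\|Q\|=\|S\|$ because $q$ maps the open unit ball onto the open unit ball.
\end{enumerate}
Combining this correspondence with $\|P\|=1+\|Q\|$ and taking infima gives the stated formula. This remains valid when no projection exists, since both sides are then $+\infty$.

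The only nontrivial point is the norm additivity $\|\pi_n-\varphi_n\|=1+\|\varphi_n\|$. I would cite Theorem~\ref{thm:dual-splitting} for it and keep the direct coordinate-modification argument above as a backup.
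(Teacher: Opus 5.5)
Your proposal is correct and follows essentially the same route as the paper. Your ``backup'' coordinate-modification argument for $\|\pi_n-\varphi_n\|=1+\|\varphi_n\|$ (with $\varphi_n=Q^*(\pi_n)$) is exactly the paper's, and your unimodular rotation covers complex scalars more cleanly than the paper's $x\mapsto -x$. The correspondence $S\circ q=Q$, $P=I-S\circ q$ is also the paper's, and you additionally justify $\|S\|=\|Q\|$, which the paper only asserts. Invoking Theorem~\ref{thm:dual-splitting} with $\mathcal I=\mathcal P(\omega)$ is a valid shortcut for the key identity, but the direct argument is self-contained and is the one the paper uses.
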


\begin{proof}
Since $Q$ annihilates $Y\supseteq c_0$, one has
$Q^*(\pi_n)\in c_0^\perp$ for all $n\in\N$.  We claim that
\[
 \forall\,n\in\N:\quad\|P^*(\pi_n)\|  =\|\pi_n-Q^*(\pi_n)\|  =1+\|Q^*(\pi_n)\|.
\]
Indeed, the inequality ``$\leq$'' follows from the triangle inequality. For the reverse inequality, fix $\varepsilon>0$ and choose $x\in B_{\ell_\infty}$ such that, after replacing $x$ by $-x$ if necessary, $-\big(Q^*(\pi_n)\big)(x)>\|Q^*(\pi_n)\|-\varepsilon.$ Define $z=x+(1-x_n)e_n.$ Then $\|z\|_\infty\leq 1$ and, since $e_n\in c_0$ and $Q^*(\pi_n)\in c_0^\perp$,  we have $\big(Q^*(\pi_n)\big)(z)=\big(Q^*(\pi_n)\big)(x)$ and $\pi_n(z)=1.$ Hence,
\begin{equation*}
    \|\pi_n-Q^*(\pi_n)\|\geq|\big(\pi_n-Q^*(\pi_n)\big)(z)|=1-\big(Q^*(\pi_n)\big)(x)>1+\|Q^*(\pi_n)\|-\varepsilon.
\end{equation*}
Since $\varepsilon>0$ is arbitrary, $\|\pi_n-Q^*(\pi_n)\|
\geq 1+\|Q^*(\pi_n)\|,$ which implies the desired conclusion.

Taking suprema over $n$ yields $\|P\|=1+\|Q\|$ and, because $Y \subsetneq \ell_\infty$, $Q$ is a nonzero projection and $\|Q\|\geq1$.

Now we will prove the formula for $\lambda_{\text{comp}}(Y)$. 
Let $P\colon\ell_\infty\to Y$ be a projection and $Q= I_{\ell_\infty}-P$. Define $S\colon\ell_\infty/Y\to\ell_\infty$ by $S(x+Y)=Q(x)$, for each $x\in\ell_\infty$ and notice that $S$ is well defined, $Q=S\circ q$ and $\|S\|=\|Q\|=\|P\|-1$. Since $q\circ Q=q$ and $S\circ q=Q$, it follows that $q\circ S=I_{\ell_\infty/Y}$.
Consequently,
\[
\lambda_{\text{comp}}(Y)\geq1+\inf\left\{
 \|S\|:
 S:\ell_\infty/ Y \to\ell_\infty,
 \ q\circ S=I_{\ell_\infty/ Y}
 \right\}.
\]
Conversely, if $S:\ell_\infty/ Y \to\ell_\infty$ is a bounded operator such that $q\circ S=I_{\ell_\infty/Y}$, then $P= I_{\ell_\infty}-S\circ q$ is a projection from $\ell_\infty$ onto $Y$. Notice that 
$\|P\|=1+\|S\circ q\|=1+\|S\|$. Therefore,
\[\lambda_{\text{comp}}(Y)\leq 1+\inf\left\{
 \|S\|:
 S:\ell_\infty/ Y\to\ell_\infty,
 \ q\circ S=I_{\ell_\infty/ Y}
 \right\},\]
 and the proof is complete.
\end{proof}

As an imediate consequence of the previous result and the fact that $c_0\subseteq c_{0,\mathcal I} \subseteq \ell_\infty$, we obtain the following:

\begin{corollary}\label{cor:no-below-two}
No ideal has complementation constant or separable-injectivity
constant strictly smaller than $2$.
\end{corollary}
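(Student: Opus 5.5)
The corollary has two parts, and I will treat them separately, splitting each into the cases of a proper and an improper ideal.

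\textbf{Separable-injectivity constant.} For a proper ideal $\mathcal I$, Theorem~\ref{thm:sharp-sobczyk} already gives $\lambda_{\mathrm{si}}(c_{0,\mathcal I})=2$. The argument there rests on the extension problem $c_0\subseteq c$ with the inclusion $i\colon c_0\to c_{0,\mathcal I}$, which has $\|i\|=1$. It shows that every extension $\widetilde T\colon c\to c_{0,\mathcal I}$ has norm $1+\|\mathds 1-z\|_\infty\geq 1+\dist(\mathds 1,c_{0,\mathcal I})=2$. Since $c$ is separable, the infimum defining $\lambda_{\mathrm{si}}$ is at least $2$, so no constant $\lambda<2$ is admissible. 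I will simply cite this.

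\textbf{Complementation constant.} Let $\mathcal I$ be a proper ideal. Then $c_0\subseteq c_{0,\mathcal I}\subsetneq\ell_\infty$, because $\mathds 1\notin c_{0,\mathcal I}$. So $Y=c_{0,\mathcal I}$ satisfies the hypotheses of Proposition~\ref{prop:projection-formula}, and every projection $P\colon\ell_\infty\to c_{0,\mathcal I}$ has $\|P\|=1+\|Q\|\geq 2$. Taking the infimum over all projections gives $\lambda_{\mathrm{comp}}(c_{0,\mathcal I})\geq 2$. If $\mathcal I$ is not complemented, the infimum is over the empty set and equals $+\infty$, by the convention in Definition~\ref{def: comp and si constant}; this is still at least $2$.

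\textbf{Main obstacle and the improper case.} There is no real analytic difficulty here; the only point needing care is the improper ideal $\mathcal I=\mathcal P(\mathbb N)$. In that case $c_{0,\mathcal I}=\ell_\infty$, and the trivial projection gives constant $1$. Also, $\ell_\infty$ is $1$-injective, so its separable-injectivity constant is $1$. The word ``ideal'' in the statement must therefore be read as ``proper ideal'', as in the hypotheses of Theorem~\ref{thm:sharp-sobczyk} and Proposition~\ref{prop:projection-formula}. I will state this restriction explicitly. I will also note why properness matters in each part: it is used exactly to ensure $\dist(\mathds 1,c_{0,\mathcal I})=1$ and $Y\neq\ell_\infty$.
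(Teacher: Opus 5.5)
Your proposal is correct and follows the paper's route. The complementation bound comes from Proposition~\ref{prop:projection-formula} applied to $Y=c_{0,\mathcal I}$, and the separable-injectivity bound is exactly the $c_0\subseteq c$ argument of Theorem~\ref{thm:sharp-sobczyk}. Your explicit restriction to proper ideals is also right: the paper itself notes in the remark right after the corollary that $\mathcal I=\mathcal P(\omega)$ gives constant $1$ in both cases.
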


\begin{remark} The results stated in Proposition~\ref{prop:projection-formula} are not valid in general for complemented spaces. Consider $X=\ell_1^3$, $\mathds{1} = (1,1,1)$, $Y=\spn\{\mathds{1}\}$ and $P:\ell_1^3\to\ell_1^3$ given by
\[
P_0(x)= \left(\frac{x_1+x_2+x_3}{3}\right)\mathds 1.
\]
Observe that $P_0$ is a projection onto $Y$, $\|P_0\|=1$ and $\|I-P_0\|=\frac43$. Consequently, $\|P_0\|\neq 1+\|I-P_0\|$.

Furthermore, denoting the quotient map by $q:X\to X/Y$, then:
\[
\lambda_{\mathrm{comp}}(Y,X)=1,
\qquad
\inf\{\|S\|:q\circ S=I_{X/Y}\}=\frac43.
\]
\end{remark}

\begin{remark} The only case when the constants complementation and of separable-injectivity is below $2$ occurs when one take $\mathcal I = \mathcal P(\omega)$ since $c_{0,\mathcal I}=\ell_\infty$, and both constants are $1$.  Thus the
separable-injectivity constant has the complete description given by:
\[
 \lambda_{\mathrm{si}}(c_{0,\mathcal I})
 =
 \begin{cases}
 1,&\mathcal I=\mathcal P(\omega),\\
 2,&\fin\subseteq\mathcal I\subsetneq\mathcal P(\omega).
 \end{cases}
\]
\end{remark}
For the complementation constant, the remaining open question is the following:
\begin{question}
    Does every complemented ideal has complementation constant exactly 2 or constants strictly larger than $2$ may occur?
\end{question}

By Proposition~\ref{prop:projection-formula}, the previous question is equivalent to
asking whether the canonical quotient map always has a contractive linear
right inverse whenever it has a bounded one.

\section{On statistical ideals and complementation of \texorpdfstring{$c_{0,\mathcal I}$}{c0I}}\label{section complementation}

In \cite{kadetsetal2022}, the authors introduced and studied a class of filters associated with positive measures defined on $2^\omega$. As shown in \cite{rincon-uzcateguiII}, this class of filters is closely connected to the problem of complementability of an ideal $\I$. We describe this connection below.  

According to \cite{kadetsetal2022}, a finitely additive positive measure $\mu$ defined in $2^\omega$ is called \textit{statistical measure} if $\mu(\omega)=1$ and $\mu(\{k\})=0$ for all $k\in\omega$. The \textit{filter generated by a statistical measure $\mu$} is defined by
\[\mathfrak F_\mu\doteq\{A\subseteq\omega\,\colon\,\mu(A)=1\}.\]
An ideal $\I$ is called a \textit{statistical ideal} if there exists a statistical measure $\mu$ such that $\I^*=\mathfrak F_\mu$. Equivalently, a statistical ideal is an ideal of the form
\[\I_\mu=\{A\subseteq\omega\,\colon\,\mu(A)=0\},\]
where $\mu$ is a statistical measure. 

Before continuing, we need to recall some notions. A Boolean algebra $\mathbb B$ admits a strictly positive finitely additive measure if there is a finitely additive measure $\mu:\mathbb B\to\R^+$ such that for all $b\in\mathbb B\setminus\{{\bf0}\}$, we have $\mu(b)>0$. A compact space $K$ is \emph{approximable} if the closed unit ball of
$M(K)=C(K)^*$ is separable for the weak-star topology $\sigma(M(K),C(K))$.

\begin{theorem}\cite[Theorem~0.1]{HrusakSaenz}\label{complemented-approximable}
 Let $\I$ be an ideal. The following statements are equivalent:
 \begin{enumerate}
     \item $\I$ is complemented;
     \item $\ell_\infty/c_{0,\I}$ is isomorphic to a closed subspace of $\ell_\infty$.
     \item $K_{\mathcal I}\cong\Stone(\mathcal P(\omega)/\mathcal I)$ is approximable.
 \end{enumerate}
 In particular, if $\I$ is complemented, then the Boolean algebra $\mathcal P(\omega)/\I$ admits a strictly positive finitely additive measure. 
\end{theorem}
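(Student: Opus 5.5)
The theorem is quoted from \cite{HrusakSaenz}. I would prove it as the cycle (1)$\Rightarrow$(2)$\Rightarrow$(3)$\Rightarrow$(1) and then derive the final assertion from (3). Throughout I would use Proposition~\ref{isometrias}, which identifies $\ell_\infty/c_{0,\I}$ with $C(K_\I)$.

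\emph{(1)$\Rightarrow$(2).} If $P\colon\ell_\infty\to c_{0,\I}$ is a bounded projection, then $\ell_\infty=c_{0,\I}\oplus\ker P$. The restriction $q|_{\ker P}\colon\ker P\to\ell_\infty/c_{0,\I}$ of the quotient map is a bijective bounded operator, so it is an isomorphism by the open mapping theorem. Hence $\ell_\infty/c_{0,\I}$ is isomorphic to the closed subspace $\ker P$ of $\ell_\infty$.

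\emph{(2)$\Rightarrow$(3).} Let $J\colon C(K_\I)\to\ell_\infty$ be an isomorphic embedding and put $\nu_n\doteq J^*(\pi_n)\in M(K_\I)$. The set $\{\nu_n\}$ is bounded and $c$-norming for some $c>0$. By the bipolar theorem, the weak$^*$-closed absolutely convex hull of the $\nu_n$ contains $c\,B_{M(K_\I)}$. Rational absolutely convex combinations of the $\nu_n$ form a countable set, and rescaling by $1/c$ shows that $B_{M(K_\I)}$ is weak$^*$ separable, so $K_\I$ is approximable. The identification $K_\I\cong\Stone(\mathcal P(\omega)/\I)$ holds because ultrafilters extending $\I^*$ correspond to ultrafilters of the quotient algebra.

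\emph{(3)$\Rightarrow$(1).} This is the main obstacle. By Proposition~\ref{prop:projection-formula} it suffices to find a bounded $S\colon C(K_\I)\to\ell_\infty$ with $q\circ S=I$. Writing $S(f)=(\nu_n(f))_n$ with $\nu_n\in M(K_\I)$ bounded, this condition says that $x_n-\nu_n(qx)$ is $\I$-null for every $x\in\ell_\infty$. Equivalently, for every ultrafilter $p\supseteq\I^*$ we need $\text{w}^*\text{-}\lim_{p}\nu_n=\delta_p$ uniformly enough. I would choose $\nu_n$ from a countable weak$^*$-dense set $(\mu_k)\subseteq B_{M(K_\I)}$, in the spirit of the diagonal construction in Theorem~\ref{thm:sobczyk}. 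Each index $n$ would be assigned a measure $\mu_{k(n)}$ that approximates the point masses $\delta_p$ for those $p\in K_\I$ "close" to $n$.

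The difficulty is that $\ell_\infty$ is non-separable, so there is no countable family of test vectors to diagonalize against as in Theorem~\ref{thm:sobczyk}. My first attempt would therefore exploit the structure of the dense sequence itself. The weak$^*$ closure of $\{\mu_k\}$ contains every $\delta_p$, so for each $p$ there is a net in $\{\mu_k\}$ converging to $\delta_p$. I would try to index this by a partition of $\omega$ into sets $A_k$ and set $\nu_n=\mu_k$ for $n\in A_k$. The partition would be chosen so that, for every clopen $\widehat B\cap K_\I$, the sets $\{n:|\nu_n(\chi_{\widehat B})-\chi_B(n)|\ge\varepsilon\}$ lie in $\I$. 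If this fails, I would instead follow the Hru\v{s}\'ak--Sa\'enz argument directly.

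\emph{Final assertion.} Assume (3), and let $(\mu_k)$ be weak$^*$ dense in $B_{M(K_\I)}$. Put $\nu\doteq\sum_k2^{-k}|\mu_k|$, a positive finite measure. Take $A\notin\I$, so that the clopen set $U=\widehat A\cap K_\I$ is nonempty, and pick $p\in U$. Since $\delta_p$ is a weak$^*$ limit of the $\mu_k$ and $\chi_U\in C(K_\I)$, some $\mu_k$ has $|\mu_k|(U)\ge|\mu_k(U)|>1/2$, hence $\nu(U)>0$. Therefore $[A]_\I\mapsto\nu(\widehat A\cap K_\I)$ is a strictly positive finitely additive measure on $\mathcal P(\omega)/\I$.
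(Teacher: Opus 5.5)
\paragraph{Overall.} The paper does not prove this statement; it imports it from \cite{HrusakSaenz}. Your proposal therefore has to stand on its own, and it does not close the cycle. Your (1)$\Rightarrow$(2) is correct.

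\paragraph{The missing direction (3)$\Rightarrow$(1).} The decisive implication is not proved. You reduce it correctly to finding bounded $\nu_n\in M(K_{\mathcal I})$ with $w^*$-$\lim_{n\to p}\nu_n=\delta_p$ for every $p\in K_{\mathcal I}$; no uniformity is needed. The partition scheme, however, does not produce such $\nu_n$.
\begin{itemize}
\item If a piece $A_k\notin\mathcal I$, then $\lim_{n\to p}\nu_n=\mu_k$ for every $p\in\widehat{A_k}\cap K_{\mathcal I}$. This forces that nonempty clopen set to be a single point.
\item So, apart from pieces isolating isolated points, all $A_k$ lie in $\mathcal I$. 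The requirement then becomes $\lim_{k\to\varphi(p)}\mu_k=\delta_p$, where $\varphi(p)$ is the image ultrafilter under $n\mapsto k(n)$.
\item That is the same lifting problem, now for the homeomorphic copy $\varphi(K_{\mathcal I})\subseteq\omega^*$, with the dense sequence itself as the candidate lifting.
\end{itemize}
Nothing in the proposal explains how approximability yields such a partition, and ``follow the Hru\v{s}\'ak--Sa\'enz argument'' is not an argument. Note also that (3)$\Rightarrow$(2) is trivial: a $w^*$-dense $(\mu_k)\subseteq B_{M(K_{\mathcal I})}$ gives the isometric embedding $f\mapsto(\mu_k(f))_k$. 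So the real content of the theorem lies exactly in the implications into (1) and into (3), which you do not establish.

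\paragraph{The gap in (2)$\Rightarrow$(3).} The bipolar theorem gives $c\,B_{M(K_{\mathcal I})}\subseteq D\doteq\overline{\mathrm{aco}}^{w^*}\{\nu_n\}$, with $D$ $w^*$-separable. But your countable set of rational combinations lies in $D$, not in $c\,B_{M(K_{\mathcal I})}$. A subset of a $w^*$-separable set need not be $w^*$-separable, and rescaling does not change this.

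This is not a technicality: $w^*$-separability of the dual ball is not invariant under renorming in general. On $\ell_1(\cardc)$, consider
\[
B=\bigl\{x\in[-1,1]^{\cardc}:|x_\gamma|+|x_\delta|\leq\tfrac32\ \text{for}\ \gamma\neq\delta\bigr\}.
\]
It is the dual ball of an equivalent norm, since it contains $\frac34[-1,1]^{\cardc}$, and it lies inside the $w^*$-separable cube. Yet the relatively open sets $\{x\in B:x_\gamma>0.9\}$ are nonempty and pairwise disjoint, so $B$ is not $w^*$-separable. Thus your argument only shows that $C(K_{\mathcal I})$ has an equivalent norm with $w^*$-separable dual ball. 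Reaching approximability of $K_{\mathcal I}$ needs an additional argument specific to $C(K_{\mathcal I})$. This matters because ``complemented $\Rightarrow$ approximable'' is exactly what the paper uses in Theorem~\ref{thm:nonapprox-support}.

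\paragraph{The final assertion.} This part can be rescued without (3). A bounded sequence $(\nu_n)$ with $\sup_n|\nu_n(f)|\geq c\|f\|$ already makes $\sum_n2^{-n}|\nu_n|$ strictly positive on every nonempty clopen $\widehat A\cap K_{\mathcal I}$; test the norming inequality on its characteristic function. So your (1)$\Rightarrow$(2) alone gives the part used in Corollary~\ref{complemented implies statistical}.
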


\begin{corollary}\label{complemented implies statistical}
      If $\I$ is complemented, then $\I$ is a statistical ideal. 
\end{corollary}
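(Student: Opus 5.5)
By Theorem~\ref{complemented-approximable}, if $\I$ is complemented then $\mathcal P(\omega)/\I$ carries a strictly positive finitely additive measure $\nu$. The plan is to pull $\nu$ back to $\mathcal P(\omega)$ and check that the result is a statistical measure whose null ideal is exactly $\I$.

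First, we may assume $\I$ is proper. If $\I=\mathcal P(\omega)$, it is not a statistical ideal, but complementedness was defined only for proper ideals. So $\mathcal P(\omega)/\I$ is a nontrivial Boolean algebra, its unit $[\omega]_\I$ is nonzero, and $\nu([\omega]_\I)>0$. Normalizing, we take $\nu([\omega]_\I)=1$.

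Next, define $\mu(A)\doteq\nu([A]_\I)$ for $A\subseteq\omega$. Then:
\begin{itemize}
\item $\mu$ is finitely additive and positive. If $A\cap B=\emptyset$, then $[A\cup B]_\I=[A]_\I\vee[B]_\I$ and $[A]_\I\wedge[B]_\I=[A\cap B]_\I=\mathbf 0$, so additivity of $\nu$ gives $\mu(A\cup B)=\mu(A)+\mu(B)$.
\item $\mu(\omega)=1$.
\item $\mu(\{k\})=0$ for every $k$, because $\fin\subseteq\I$ gives $[\{k\}]_\I=\mathbf 0$.
\end{itemize}
Hence $\mu$ is a statistical measure.

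Finally, we show $\I_\mu=\I$. If $A\in\I$, then $[A]_\I=\mathbf 0$, so $\mu(A)=0$. Conversely, if $A\notin\I$, then $[A]_\I\neq\mathbf 0$, and strict positivity of $\nu$ gives $\mu(A)>0$. Thus $\I=\{A:\mu(A)=0\}=\I_\mu$, which is a statistical ideal.

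The only genuinely deep input is the existence of the strictly positive measure, which comes from Theorem~\ref{complemented-approximable}: approximability of $K_\I$ means $B_{M(K_\I)}$ is weak$^*$-separable, and $\sum_k 2^{-k}|\mu_k|$ over a dense sequence is strictly positive on clopen sets. Everything else is routine bookkeeping; the only point needing care is the properness of $\I$, so that the normalization is possible.
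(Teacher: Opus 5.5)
Your proof is correct and is the paper's argument: the paper also takes the strictly positive measure $m$ on $\mathcal P(\omega)/\mathcal I$ from Theorem~\ref{complemented-approximable} and sets $\mu(A)=m([A]_{\mathcal I})/m([\omega]_{\mathcal I})$. Your extra checks make explicit what the paper leaves implicit: properness, so the normalization is legitimate; $\mu(\{k\})=0$ because $\fin\subseteq\mathcal I$; and $\mathcal I_\mu=\mathcal I$ via strict positivity.
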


\begin{proof}
    By Theorem~\ref{complemented-approximable}, $\mathcal P(\omega)/\mathcal I$ admits a strictly positive measure $m$. Define $\mu:\mathcal P(\omega)\to[0,1]$ by $\mu(A)=m([A]_\mathcal I)/m([\omega]_\mathcal I)$, for all $A\subseteq\omega$.  Then, $\mu$ is a statistical measure such that $\mathcal I=\mathcal I_\mu$.
\end{proof}
 
So, it is natural to ask if the converse of Corollary \ref{complemented implies statistical} holds. In this section we show that this is not the case. More precisely, we will prove that for a class of $\mathcal U\in\beta\omega\setminus\omega$, the statistical ideal
\[\mathcal I_\mathcal U=\left\{A\subseteq\omega:\mathcal U-\lim_{n}\frac{|A\cap[0,n)|}{n}=0\right\},\]
is not complemented. It is worth noting that this non complemented ideal has a simpler description than the one shown in \cite[Example 2]{HrusakSaenz}. 

\subsection{A non complemented statistical ideal}

For $A\subseteq\omega$ and $\mathcal U\in\beta\omega$, define $\displaystyle d_{\mathcal U}(A)
 =\mathcal U-\lim_{n}\dfrac{|A\cap[0,n)|}{n}.$ It is easy to check that $d_{\mathcal U}$ is an statistical measure. We also define
\begin{equation*}
 \Psi_{\mathcal U}(x)
 =\mathcal U-\lim_{n}\frac1n\sum_{k<n}x_k,
 \qquad x=(x_k)\in\ell_\infty.
\end{equation*}
The functional $\Psi_{\mathcal U}$ is positive, $\Psi_{\mathcal U}( \mathds{1})=1$, and therefore $\|\Psi_{\mathcal U}\|=1$.  Notice also that
$ \Psi_{\mathcal U}(\chi_A)=d_{\mathcal U}(A)$ for all $A\subseteq\omega.$

Recall that $\ell_\infty$ can be identified with $C(\beta\omega)$ by sending a sequence $x\in\ell_\infty$ to its unique Stone--\v{C}ech extension $\tilde x\in C(\beta\omega)$. By the Riesz representation theorem, $\Psi_\mathcal U$ corresponds to a unique Radon probability measure $\widetilde d_{\mathcal U}$ on $\beta\omega$. Under this identification, we have $\widetilde d_{\mathcal U}(\widehat A)=d_\mathcal U(A)$ for all $A\subseteq\omega$.

\medskip

Let $ \mathcal I_{\mathcal U}
 =\{A\subseteq\omega:d_{\mathcal U}(A)=0\}$ and $ \mathbb B_{\mathcal U}
 =\mathcal P(\omega)/\mathcal I_{\mathcal U}.$  Notice that every element of the family of clopens
$\{\widehat A:A\in\mathcal I_{\mathcal U}\}$ is $\widetilde d_\mathcal U$-null. The measure $d_{\mathcal U}$ defines a strictly positive, finitely
additive probability measure on $\mathbb B_{\mathcal U}$, namely, $\overline d_{\mathcal U}([A]_{\mathcal I_{\mathcal U}})
 =d_{\mathcal U}(A)$ for each $[A]_{\mathcal I_{\mathcal U}}\in \mathbb B_{\mathcal U}.$ This is well defined because $|d_{\mathcal U}(A)-d_{\mathcal U}(B)| \leq d_{\mathcal U}(A\mathbin\triangle B)$ for all $A,B\subseteq\omega$.
The measure $\overline d_{\mathcal U}$ is strictly positive by the definition of $\I_\mathcal U$. 

\begin{lemma}
\label{lem:support-stone}
One has
\begin{equation}
 \supp\widetilde d_{\mathcal U}
 =K_{\mathcal I_{\mathcal U}}
 \cong\Stone(\mathbb B_{\mathcal U}).
 \label{eq:support-stone}
\end{equation}
The homeomorphism in \eqref{eq:support-stone} is the canonical one coming
from Stone duality.
\end{lemma}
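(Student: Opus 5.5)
We will prove the set equality $\supp\widetilde d_{\mathcal U}=K_{\mathcal I_{\mathcal U}}$ by double inclusion, using that the clopen sets $\widehat A$ ($A\subseteq\omega$) form a basis of $\beta\omega$ and that $\widetilde d_{\mathcal U}(\widehat A)=d_{\mathcal U}(A)$. Recall that $\supp\widetilde d_{\mathcal U}$ is the set of points $p$ all of whose open neighbourhoods have positive measure. Since the sets $\widehat A$ with $A\in p$ form a neighbourhood base at $p$, we can restrict attention to these basic sets: $p\in\supp\widetilde d_{\mathcal U}$ if and only if $\widetilde d_{\mathcal U}(\widehat A)>0$ for every $A\in p$.

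For ``$\subseteq$'', let $p\in\supp\widetilde d_{\mathcal U}$ and suppose $p\notin K_{\mathcal I_{\mathcal U}}$. Then some $A\in\mathcal I_{\mathcal U}\cap p$ exists, so $\widehat A$ is an open neighbourhood of $p$. However, $\widetilde d_{\mathcal U}(\widehat A)=d_{\mathcal U}(A)=0$, which contradicts $p\in\supp\widetilde d_{\mathcal U}$.

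For ``$\supseteq$'', let $p\in K_{\mathcal I_{\mathcal U}}$, so that $p\cap\mathcal I_{\mathcal U}=\emptyset$. Every basic neighbourhood $\widehat A$ of $p$ has $A\in p$, hence $A\notin\mathcal I_{\mathcal U}$. Thus $d_{\mathcal U}(A)>0$, that is, $\widetilde d_{\mathcal U}(\widehat A)>0$, and so $p\in\supp\widetilde d_{\mathcal U}$. The only point needing care in both directions is the reduction from arbitrary open neighbourhoods to basic clopen ones. This reduction is valid because the measure is monotone: any open neighbourhood $V$ of $p$ contains some basic $\widehat A\ni p$, so $\widetilde d_{\mathcal U}(V)\geq\widetilde d_{\mathcal U}(\widehat A)$, and positivity on basic sets gives positivity on all neighbourhoods.

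For the homeomorphism $K_{\mathcal I_{\mathcal U}}\cong\Stone(\mathbb B_{\mathcal U})$, which is the standard identification for any ideal, we define $\Theta(p)=\{[A]_{\mathcal I_{\mathcal U}}:A\in p\}$ for $p\in K_{\mathcal I_{\mathcal U}}$.
\begin{itemize}
\item $\Theta(p)$ is an ultrafilter of $\mathbb B_{\mathcal U}$, and $\Theta$ is well defined. Indeed, $p\supseteq\mathcal I_{\mathcal U}^*$, so if $A\in p$ and $A\mathbin\triangle B\in\mathcal I_{\mathcal U}$, then $B\supseteq A\cap(\omega\setminus(A\mathbin\triangle B))\in p$, hence $B\in p$.
\item $\Theta$ is injective, since $p$ is recovered as $\bigcup\Theta(p)$.
\item $\Theta$ is surjective. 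Given an ultrafilter $\mathcal V$ of $\mathbb B_{\mathcal U}$, the family $\{A:[A]\in\mathcal V\}$ is an ultrafilter on $\omega$, and it lies in $K_{\mathcal I_{\mathcal U}}$ because $[A]=\mathbf 0$ for every $A\in\mathcal I_{\mathcal U}$.
\item $\Theta$ is continuous, because it maps $\widehat A\cap K_{\mathcal I_{\mathcal U}}$ onto the basic clopen set of $\Stone(\mathbb B_{\mathcal U})$ determined by $[A]$. Being a continuous bijection between compact Hausdorff spaces, $\Theta$ is a homeomorphism.
\end{itemize}
This $\Theta$ is exactly the canonical map from Stone duality, which is what the statement asserts.
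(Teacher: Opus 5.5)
Your proposal is correct and follows the paper's proof essentially step for step. The support equality is proved by double inclusion using the clopen basis $\widehat A$ together with $\widetilde d_{\mathcal U}(\widehat A)=d_{\mathcal U}(A)$. The identification with $\Stone(\mathbb B_{\mathcal U})$ uses the same canonical map $\Theta(p)=\{[A]_{\mathcal I_{\mathcal U}}:A\in p\}$, with basic clopens corresponding to $K_{\mathcal I_{\mathcal U}}\cap\widehat A$; you additionally spell out injectivity and the compact-to-Hausdorff argument, which the paper leaves implicit.
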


\begin{proof}
Recall that a point $p$ belongs to the support of a positive Radon measure if and only if every open neighborhood of $p$ has positive
measure. Suppose first that $p\notin\supp\widetilde d_{\mathcal U}$.  There is an open set $O\subseteq\beta\omega$ such that $ p\in O$ and 
 $\widetilde d_{\mathcal U}(O)=0.$ Choose $A\subseteq\omega$ with $ p\in\widehat A\subseteq O.$  Then, 
\[
 0\leq d_{\mathcal U}(A)
 =\widetilde d_{\mathcal U}(\widehat A)
 \leq\widetilde d_{\mathcal U}(O)=0.
\]
Hence $A\in\mathcal I_{\mathcal U}$.  Since $p\in\widehat A$ means exactly that $A\in p$, the ultrafilter $p$
meets $\mathcal I_{\mathcal U}$.  Thus, $p\notin K_{\mathcal I_{\mathcal U}}$.

Conversely, suppose that
$p\notin K_{\mathcal I_{\mathcal U}}$.  By definition, there is
$A\in p\cap\mathcal I_{\mathcal U}$.  Then $p\in\widehat A$, and so
 $ \widetilde d_{\mathcal U}(\widehat A)
 =d_{\mathcal U}(A)=0.$ Thus, $p$ has a null open neighborhood and cannot belong to the support.
This proves the first equality in \eqref{eq:support-stone}.

For completeness, we also describe the Stone-duality identification.
For $p\in K_{\mathcal I_{\mathcal U}}$, define $ \Theta(p)
 =\{[A]_{\mathcal I_{\mathcal U}}:A\in p\}.$ Because $p$ contains no member of $\mathcal I_{\mathcal U}$, this is a
well-defined ultrafilter on the quotient Boolean algebra
$\mathbb B_{\mathcal U}$.  Every ultrafilter on
$\mathbb B_{\mathcal U}$ arises in this way, and $ \Theta^{-1}
 \bigl(\{q:[A]_{\mathcal I_{\mathcal U}}\in q\}\bigr)
 =K_{\mathcal I_{\mathcal U}}\cap\widehat A.$

It follows that $\Theta$ is the canonical homeomorphism from
$K_{\mathcal I_{\mathcal U}}$ onto
$\Stone(\mathbb B_{\mathcal U})$.
\end{proof}

\begin{lemma}
\label{lem:images}
Let $K$ and $L$ be compact Hausdorff spaces.  If $K$ is approximable and
$\pi:K\to L$ is a continuous surjection, then $L$ is approximable.
\end{lemma}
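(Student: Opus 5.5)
The natural route is through the adjoint of the composition operator. Since $\pi:K\to L$ is a continuous surjection, the map $\pi^\circ\colon C(L)\to C(K)$, $f\mapsto f\circ\pi$, is a linear isometric embedding. Its adjoint $\pi^*\colon M(K)\to M(L)$ is the push-forward of measures, $\pi^*(\mu)=\mu\circ\pi^{-1}$. It has two properties we need. First, $\|\pi^*\|\le 1$, so $\pi^*$ maps $B_{M(K)}$ into $B_{M(L)}$. Second, being an adjoint, it is weak$^*$-to-weak$^*$ continuous.

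Now fix a weak$^*$-dense countable set $D\subseteq B_{M(K)}$, which exists since $K$ is approximable. Continuity gives that $\pi^*(D)$ is weak$^*$-dense in $\pi^*(B_{M(K)})$. It therefore suffices to show that $\pi^*(B_{M(K)})=B_{M(L)}$, i.e.\ that every $\nu\in M(L)$ with $\|\nu\|\le1$ is the push-forward of some $\mu\in M(K)$ with $\|\mu\|\le1$.

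For this I would use the Hahn--Banach theorem. Consider the functional $\nu$ on $C(L)$ and transport it to the closed subspace $\pi^\circ(C(L))\subseteq C(K)$ by setting $\psi(f\circ\pi)=\nu(f)$. Because $\pi^\circ$ is an isometry, $\psi$ has norm $\|\nu\|\le1$. Extend $\psi$ norm-preservingly to some $\mu\in C(K)^*=M(K)$. Then $\|\mu\|\le1$, and $\pi^*\mu(f)=\mu(f\circ\pi)=\nu(f)$ for every $f\in C(L)$, so $\pi^*\mu=\nu$.

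This surjectivity onto the whole ball is the only nontrivial step; everything else is formal. Putting it together, $\pi^*(D)$ is a countable weak$^*$-dense subset of $B_{M(L)}$, and hence $L$ is approximable.
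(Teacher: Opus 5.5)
Your proposal is correct and follows essentially the same route as the paper. The paper also uses the isometric embedding $f\mapsto f\circ\pi$, a Hahn--Banach extension to show that its weak$^*$-continuous adjoint maps $B_{M(K)}$ onto $B_{M(L)}$, and then pushes forward a countable weak$^*$-dense subset of $B_{M(K)}$.
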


\begin{proof}
Define $C_\pi:C(L)\longrightarrow C(K)$, $C_\pi f=f\circ\pi.$ Since $\pi$ is onto,
\[
 \|C_\pi f\|_{C(K)}
 =\sup_{x\in K}|f(\pi(x))|
 =\sup_{y\in L}|f(y)|
 =\|f\|_{C(L)}.
\]
Thus $C_\pi$ is an isometric embedding.  Thus, the adjoint map $ C_\pi^*:\mu\in M(K)\longmapsto \mu\circ C_\pi\in M(L)$ is $w^*$-star continuous. We claim that $C_\pi^*$ maps the unit ball of $M(K)$ onto the unit ball
of $M(L)$.  Let $\nu\in M(L)$ be such that $\|\nu\|\leq1$ and $\Lambda\colon C_\pi(C(L))\to\mathbb R$ be given by
\[
 \Lambda(C_\pi f)=\int_L f\,d\nu,\quad f\in C(L).
\]
Since $C_\pi$ is an isometry, $\|\Lambda\|=\|\nu\|\leq1$. Let $\mu\in M(K)$ be a Hahn--Banach extension of $\Lambda$.  Then,
$\|\mu\|\leq1$ and $C_\pi^*\mu=\nu$, proving the claim.

Now let $D$ be a countable $w^*$-star dense subset of the unit ball of
$M(K)$.  The set $C_\pi^*(D)$ is countable and continuity, together with
surjectivity on the unit balls, shows that it is $w^*$-star dense in the
unit ball of $M(L)$.  Hence, $L$ is approximable.
\end{proof}

 For $n\geq1$, write $\underline n=\{0,1,\ldots,n-1\}$ and equip $\mathcal P(\underline n)$ with normalized counting measure
$m_n(F)=\frac{|F|}{n},$ $F\in\mathcal P(\underline n)$. On the direct product $\displaystyle\prod_{n\geq1}\mathcal P(\underline n)$ define the pseudometric
\begin{equation*}
 \rho_{\mathcal V}(\mathbf F,\mathbf G)
 =\mathcal V-\lim_n
   \frac{|F_n\mathbin\triangle G_n|}{n},
 \qquad
 \mathbf F=(F_n),\quad \mathbf G=(G_n),
\end{equation*}
where $\mathcal V$ is a free ultrafilter.  The null ideal is $\mathcal N_{\mathcal V}
 =\left\{
   \mathbf F=(F_n):
   \mathcal V-\lim_n\frac{|F_n|}{n}=0
  \right\}.$

The metric ultraproduct is the quotient Boolean algebra
\begin{equation*}
 \mathbb A_{\mathcal V}
 =\left(\prod_{n\geq1}\mathcal P(\underline n)\right)
  \big/\mathcal N_{\mathcal V}.
\end{equation*}
An element of $\mathbb A_{\mathcal V}$ will be written $[\mathbf F]_{\mathcal N_{\mathcal V}}
 =[(F_n)_{n\geq1}]_{\mathcal N_{\mathcal V}}.$ The quotient carries the strictly positive probability measure
\begin{equation*}
 m_{\mathcal V}
 \bigl([\mathbf F]_{\mathcal N_{\mathcal V}}\bigr)
 =\mathcal V-\lim_n\frac{|F_n|}{n}.
\end{equation*}
The metric is $ \rho_{\mathcal V}(a,b)=m_{\mathcal V}(a\mathbin\triangle b)$ for all $a,b\in\mathbb A_\mathcal V$.  The quotient is complete for this metric, Boolean operations are
continuous, and it is a complete probability algebra.  Equivalently,
countable joins can be obtained as metric limits of finite joins.  These
standard facts about measure ultraproducts are discussed, for example, in
\cite{Fremlin}.

There is a canonical map from the density quotient into this
ultraproduct:
\begin{equation}
 e_{\mathcal V}:\mathbb B_{\mathcal V}\longrightarrow
 \mathbb A_{\mathcal V},
 \qquad
 e_{\mathcal V}([A]_{\mathcal I_{\mathcal V}})
 =[(A\cap\underline n)_{n\geq1}]_{\mathcal N_{\mathcal V}}.
 \label{eq:canonical-e}
\end{equation}
This map is well defined. Indeed, if $A,B\subseteq\omega$, then
\begin{equation*}
     \rho_{\mathcal V}
 \bigl((A\cap\underline n)_n,(B\cap\underline n)_n\bigr)
 =\mathcal V-\lim_n
   \frac{|(A\cap\underline n)\mathbin\triangle
                (B\cap\underline n)|}{n}
 =\mathcal V-\lim_n
   \frac{|(A\mathbin\triangle B)\cap\underline n|}{n}
 =d_{\mathcal V}(A\mathbin\triangle B).
\end{equation*}
Consequently,
$[A]_{\mathcal I_{\mathcal V}}=[B]_{\mathcal I_{\mathcal V}}$
if and only if the two sequences on the right of
\eqref{eq:canonical-e} represent the same ultraproduct element.
Thus, $e_{\mathcal V}$ is well defined and injective.  Observe that $e_{\mathcal V}$ preserves all
finite Boolean operations coordinatewise, and
\begin{equation*}
 m_{\mathcal V}
 \bigl(e_{\mathcal V}([A]_{\mathcal I_{\mathcal V}})\bigr)
 =d_{\mathcal V}(A),\quad\text{ for all }A\subseteq\omega.
\end{equation*}
Hence $e_{\mathcal V}$ is an isometric, measure-preserving Boolean
embedding.

We now state exactly the consequences of Greb\'ik's construction that are
used in this section. Recall that an ultrafilter $\mathcal V$ is \emph{thin} if
\begin{equation*}
 \inf_{B\in\mathcal V}
 \limsup_{j\to\infty}\frac{b_j}{b_{j+1}}=0,\quad\text{where}\quad B=\{b_0<b_1<\cdots\}.
\end{equation*}

Now we introduce the combinatorial property that controls countable
additivity.  For an integer $k>1$ and a set $A\subseteq\omega$, put
\begin{equation*}
 \Dil_k(A)
 =\bigcup_{n\in A}[kn,(k+1)n].
\end{equation*}

\begin{definition}
A free ultrafilter $\mathcal U$ is \emph{$\times$-invariant} if for every
$A\in\mathcal U$ there exists an integer $k>1$ such that
$\Dil_k(A)\in\mathcal U$.
\end{definition}

The upcoming result will be used in the proof of the main theorem in this section.

\begin{theorem}\cite[Theorem~1.5]{Grebik}\label{eq:grebik-dichotomy}
    $\overline d_{\mathcal U}$ is countably additive on $\mathbb B_{\mathcal U}$ if and only if $ \mathcal U$ is not $\times$-invariant.
\end{theorem}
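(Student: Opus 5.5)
The statement is \cite[Theorem~1.5]{Grebik}. If we do not simply cite it, the plan is to prove both directions through the measure-preserving Boolean embedding $e_{\mathcal U}\colon\mathbb B_{\mathcal U}\to\mathbb A_{\mathcal U}$ constructed above, together with the fact that $m_{\mathcal U}$ is countably additive on the complete probability algebra $\mathbb A_{\mathcal U}$. Countable additivity of $\overline d_{\mathcal U}$ is equivalent to a realization statement. Suppose $[A_0]\geq[A_1]\geq\cdots$ in $\mathbb B_{\mathcal U}$ with $d_{\mathcal U}(A_k)\downarrow\delta>0$. We want some $B\subseteq\omega$ with $B\setminus A_k\in\mathcal I_{\mathcal U}$ for every $k$ and $d_{\mathcal U}(B)=\delta$; this is the same as saying that $e_{\mathcal U}$ preserves the countable meets that exist in $\mathbb A_{\mathcal U}$. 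In $\mathbb A_{\mathcal U}$ the meet $a=\bigwedge_k e_{\mathcal U}([A_k])$ always exists and $m_{\mathcal U}(a)=\delta$. So the whole question is whether $a$ is represented by an honest set, i.e.\ lies in the range of $e_{\mathcal U}$.

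For ($\Leftarrow$), assume $\mathcal U$ is not $\times$-invariant. Fix $A\in\mathcal U$ with $\Dil_k(A)\notin\mathcal U$ for every $k>1$. Then $C_k\doteq A\setminus\Dil_k(A)\in\mathcal U$ for each $k$, and the $C_k$ decrease. For $n\in C_k$ no $m\in A$ satisfies $km\leq n\leq(k+1)m$, which is a multiplicative-gap condition: $\mathcal U$-almost all relevant $n$ are isolated at scale $k$ from the other points of $A$.

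Enumerate $A=\{n_0<n_1<\cdots\}$ and build $B$ blockwise, setting $B\cap[n_{j-1},n_j)=A_{k_j}\cap[n_{j-1},n_j)$. Here $k_j\to\infty$ slowly, with $k_j\leq k$ whenever $n_j\notin C_k$, and the choice is arranged so that for $n_j\in C_k$ the earlier blocks occupy a proportion at most $O(1/k)$ of $[0,n_j)$. Then two estimates should follow along $\mathcal U$: $|(B\setminus A_k)\cap[0,n)|/n\to0$, and $|B\cap[0,n)|/n\geq d$-approximations of $\delta$. Together these give $B\leq[A_k]$ for all $k$ and $d_{\mathcal U}(B)=\delta$.

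The main obstacle is handling $n\in\mathcal U$-large sets that are not themselves in $A$, since the density limit is taken over all $n$, not only over points of $A$. I would first try to replace $\mathcal U$ by its trace: push forward along $n\mapsto\min(A\setminus n)$, and use the gap condition to show that densities at $n$ and at the next point of $A$ agree up to $O(1/k)$.

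For ($\Rightarrow$), assume $\mathcal U$ is $\times$-invariant. The goal is a decreasing sequence with $d_{\mathcal U}(A_k)=1/2$ for all $k$, but such that every set $B$ with $B\setminus A_k\in\mathcal I_{\mathcal U}$ for all $k$ has $d_{\mathcal U}(B)=0$. I would take $A_k$ to be $\bigcup_m[2^m,2^m(1+\frac12))$, refined so that at level $k$ the set oscillates at the finer multiplicative scale $(1+1/k)$. $\times$-invariance lets one transport density information from $n$ to $kn,\ldots,(k+1)n$ inside $\mathcal U$. If $d_{\mathcal U}(B)=\beta>0$, then $\times$-invariance yields $\mathcal U$-many $n$ at which $B$ has density $\approx\beta$ on a dilated window. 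On that window $A_k$ has relative density bounded away from $1$ while $B\setminus A_k$ is $\mathcal I_{\mathcal U}$-small, and comparing the two gives a contradiction. Hence $\overline d_{\mathcal U}$ fails countable additivity.
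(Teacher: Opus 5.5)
The paper does not prove this statement; it imports it from Grebík, so your sketch has to stand on its own. Your reformulation via realizing countable meets is fine. The direction ($\Leftarrow$), however, breaks at the gap condition. Because $k>1$, $n\in A\setminus\Dil_k(A)$ only says that $A$ misses $[n/(k+1),n/k]$. Even $n\in\bigcap_{2\le i\le k}(A\setminus\Dil_i(A))$ only gives $A\cap[n/(k+1),n/2]=\emptyset$, and the $C_k$ themselves need not decrease. Nothing excludes points of $A$ in $(n/2,n)$. For example, $A=\bigcup_i[t_i,2t_i)$ with $t_{i+1}/t_i\to\infty$ has $A\cap\Dil_k(A)$ finite for every $k\ge2$, so every free $\mathcal U\ni A$ is not $\times$-invariant, yet most consecutive points of $A$ are adjacent integers. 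So the blocks $[n_{j-1},n_j)$ cannot make the earlier blocks occupy a proportion $O(1/k)$ of $[0,n_j)$.

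The obstacle you name is not the real one. Since $A\in\mathcal U$, $\mathcal U$-almost every $n$ already lies in $A$, and $n\mapsto\min(A\setminus n)$ fixes every $n\in A$. What is needed is to collapse each cluster $A\cap(n/2,n]$ to a single point. Put $c(n)=\min(A\cap(n/2,n])$, $\theta_0=\mathcal U\text{-}\lim_n n/c(n)\in[1,2]$ and $f(n)=\lceil\theta_0\,c(n)\rceil$. Then $f(n)/n\to1$ along $\mathcal U$, so $d_{f_*\mathcal U}=d_{\mathcal U}$, because $|X\cap n|/n$ moves by at most $2|n'-n|/\max\{n,n'\}$. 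Moreover $f_*\mathcal U$ lives on $S=f[A]$ with $s/\max(S\cap[0,s))\to\infty$ along $f_*\mathcal U$. This is the kind of reduction recorded in (G1), and it is absent from your plan.

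Second, even with good blocks, the bound $d_{\mathcal U}(B)\ge\delta$ is asserted rather than derived, and your index rule cannot give it. You tie $k_j$ to the gap sets $C_k$, but $|A_{k_j}\cap n_j|/n_j$ is controlled only by whether $n_j\in G_k=\{n:|A_k\cap n|/n>\delta-1/k\}$. If $k_j\to\infty$ in the ordinary sense, you need $\{n_j:n_j\in G_{k_j}\}\in\mathcal U$. That is a P-point diagonalization, and it fails in general, even when the theorem's conclusion holds. Take a non-P-point $\mathcal U$ containing $T=\{t_j\}$ with $t_j/t_{j+1}\to0$. Choose decreasing $H_k\in\mathcal U$ such that no $X\in\mathcal U$ satisfies $X\subseteq^{*}H_k$ for all $k$. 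Let $A_k$ contain the lower half of $[t_{j-1},t_j)$ exactly when $t_j\in H_k$. Then $d_{\mathcal U}(A_k)=1/2$ for all $k$, but every blockwise $B$ with $k_j\to\infty$ has $d_{\mathcal U}(B)=0$.

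The index must be chosen block by block, e.g. $k(s)=\max\{k\le s:s\in G_k\}$. Then $B\setminus A_k$ is infinite, and you must prove it is $\mathcal I_{\mathcal U}$-null using $\max(S\cap[0,s))/s\to0$.

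The direction ($\Rightarrow$) is only heuristic. You specify no decreasing sequence whose densities are controlled for an arbitrary $\times$-invariant $\mathcal U$. Also, $\times$-invariance only says that $\mathcal U$-almost every $n$ lies in $[km,(k+1)m]$ for some $m$ in a given member of $\mathcal U$, with $k$ depending on that member. Your comparison shows at most that $\beta$ is bounded by a relative density of $A_k$ that is bounded away from $1$, not from $0$, so no contradiction follows. The estimate forcing $d_{\mathcal U}(B)=0$ is exactly what is missing.
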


\begin{theorem}[Greb\'ik's reduction]
\label{thm:grebik-reduction}
Let $\mathcal U$ be a free ultrafilter on $\omega$.
\begin{enumerate}[label=\textup{(G\arabic*)},leftmargin=3em]
\item \cite[Theorem 2.5 and Corollary 2.9]{Grebik} There exists a free ultrafilter
  $\mathcal V=G(\mathcal U)$ which is thin and satisfies $d_{\mathcal V}=d_{\mathcal U}.$  Moreover, $\mathcal V$ is $\times$-invariant if and only if
  $\mathcal U$ is $\times$-invariant.
\item \cite[Proposition 2.10]{Grebik} If $\mathcal V$ is thin, then the embedding
  $e_{\mathcal V}$ in \eqref{eq:canonical-e} has dense range, and it is
  onto if and only if $\overline d_{\mathcal V}$ is countably additive
  on $\mathbb B_{\mathcal V}$.
\end{enumerate}
\end{theorem}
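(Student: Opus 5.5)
We prove the two parts separately. Part (G2) is the more transparent one and drives the whole argument; part (G1) is a reduction that makes (G2) applicable to an arbitrary $\mathcal U$.

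\emph{Plan for (G2).} Fix a thin $\mathcal V$ and $\varepsilon>0$, and choose $B=\{b_0<b_1<\cdots\}\in\mathcal V$ with $\limsup_j b_j/b_{j+1}<\varepsilon$. Given an element $[\mathbf F]_{\mathcal N_{\mathcal V}}\in\mathbb A_{\mathcal V}$, we \emph{glue} the coordinates of $\mathbf F$ along $B$. Put
\[
A=\bigcup_{j\geq0}\bigl(F_{b_{j+1}}\cap[b_j,b_{j+1})\bigr).
\]
For $n=b_{j+1}$, the sets $A\cap\underline n$ and $F_n$ agree on $[b_j,b_{j+1})$, so they differ only inside $[0,b_j)$. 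Hence
\[
\frac{|(A\cap\underline n)\mathbin\triangle F_n|}{n}\leq\frac{b_j}{b_{j+1}},
\]
and this is $<\varepsilon$ for all large $j$. Since $B\in\mathcal V$, taking the $\mathcal V$-limit gives $\rho_{\mathcal V}\bigl(e_{\mathcal V}([A]),[\mathbf F]\bigr)\leq\varepsilon$. So $e_{\mathcal V}$ has dense range.

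For the equivalence we use that $e_{\mathcal V}$ is an isometric, measure-preserving Boolean embedding into the complete metric algebra $\mathbb A_{\mathcal V}$.
\begin{itemize}
\item If $e_{\mathcal V}$ is onto, then $\mathbb B_{\mathcal V}$ is isometric to a complete probability algebra. In that algebra countable joins exist as metric limits of finite joins, so $\overline d_{\mathcal V}$ is countably additive.
\item Conversely, suppose $\overline d_{\mathcal V}$ is countably additive. Then $(\mathbb B_{\mathcal V},\rho)$ is metrically complete: a Cauchy sequence can be replaced by a fast subsequence, and its limit built from $\limsup$-type countable Boolean combinations, which exist and have the correct measure by countable additivity. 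So the range of $e_{\mathcal V}$ is closed. Being dense, it is everything.
\end{itemize}
We expect the completeness step to need care, specifically what ``countably additive on $\mathbb B_{\mathcal V}$'' means when countable suprema are not assumed to exist a priori. We would follow Greb\'ik's convention: monotone sequences have suprema in the quotient, and the measure is continuous along them.

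\emph{Plan for (G1).} We look for $\mathcal V$ supported on a thin set while preserving every density value $d_{\mathcal U}(A)$. The basic estimate is
\[
\left|\frac{|A\cap\underline n|}{n}-\frac{|A\cap\underline m|}{m}\right|\leq 2\,\frac{|n-m|}{\max(n,m)},
\]
so densities are only robust under moving $n$ by a relatively small amount. Thinness, on the other hand, demands geometric sparsity. The first attempt will be a case split.
\begin{itemize}
\item If $\mathcal U$ already contains a thin set $B$ (i.e.\ $\limsup b_j/b_{j+1}$ can be made small), take $\mathcal V=\mathcal U$.
\item Otherwise every member of $\mathcal U$ has bounded consecutive ratios. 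In that case we try to realize $d_{\mathcal U}$ as a $\mathcal U$-limit of densities evaluated along an auxiliary sparse sequence $f(n)$, chosen so that $\mathcal V=f(\mathcal U)$ is thin and $d_{\mathcal V}=d_{\mathcal U}$ holds exactly.
\end{itemize}
In the second case we would not aim for a single point map. Instead we would try an iterated construction, taking $\mathcal V$ to be a $\mathcal U$-limit of ultrafilters (a Fubini/Frol\'ik-style sum). The intent is to replace each $n$ by a block of sparse points whose averaged density reproduces $|A\cap\underline n|/n$ in the limit.

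Finally, the $\times$-invariance clause should transfer from $\mathcal U$ to $\mathcal V$. The transfer map only distorts positions by factors tending to $1$, while $\Dil_k$ is a multiplicative dilation, so $\Dil_k(A)\in\mathcal U$ should correspond to $\Dil_{k'}(A')\in\mathcal V$ with $k'$ close to $k$.

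\emph{Main obstacle.} The step we expect to be hardest is the construction in (G1) of a thin $\mathcal V$ with $d_{\mathcal V}=d_{\mathcal U}$ \emph{exactly}. The density functional is insensitive to small relative perturbations of $n$ but not to large ones. Reconciling this with sparsity is where Greb\'ik's combinatorics is genuinely needed, and we do not expect the naive case split above to work without that input.
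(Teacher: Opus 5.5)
The paper does not prove this statement; it only cites Greb\'ik.

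Your (G2) argument is sound. The gluing $A=\bigcup_j\bigl(F_{b_{j+1}}\cap[b_j,b_{j+1})\bigr)$ gives $\rho_{\mathcal V}\bigl(e_{\mathcal V}([A]_{\mathcal I_{\mathcal V}}),[\mathbf F]_{\mathcal N_{\mathcal V}}\bigr)\leq\varepsilon$, and it uses only the paper's (infimum-of-limsup) notion of thinness. The equivalence then follows because an isometric embedding of a complete space has closed range. This works once countable additivity of $\overline d_{\mathcal V}$ is read, as you propose, to include existence of countable suprema in $\mathcal P(\omega)/\mathcal I_{\mathcal V}$, with the measure continuous along monotone sequences. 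Without that reading, your completeness step has nothing from which to build the limit.

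The genuine gap is (G1): you never construct $\mathcal V$. Your case split also misstates the negation of thinness. Non-thin means there is $\varepsilon_0>0$ with $\limsup_j b_j/b_{j+1}\geq\varepsilon_0$ for every $B\in\mathcal U$; it does not mean all consecutive ratios are bounded.

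The obstacle you describe comes from demanding $f(n)/n\to1$ for \emph{all} $n$. Exact preservation of $d_{\mathcal U}$ only needs $f(n)/n\to1$ \emph{along $\mathcal U$}, and the paper's thinness only needs geometric sparsity. So a single point map does work:
\begin{itemize}
\item Let $s\in[0,1)$ represent $\mathcal U$-$\lim_n(\log_2 n\bmod 1)$ in $\mathbb R/\mathbb Z$.
\item For $n\geq1$, let $t_n'\in s+\mathbb Z$ be nearest to $\log_2 n$, and put $f(n)=\lceil 2^{t_n'}\rceil$ and $\mathcal V=f(\mathcal U)$.
\item Then $|t_n'-\log_2 n|\to0$ along $\mathcal U$, so $f(n)/n\to1$ along $\mathcal U$, and your estimate gives $d_{\mathcal V}=d_{\mathcal U}$ exactly.
\item Since $f(n)\geq n/\sqrt2$, $f$ is finite-to-one, so $\mathcal V$ is free.
\item $\mathcal V$ concentrates on $\{\lceil 2^{s+j}\rceil:j\geq-1\}$. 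Hence for each $K$ it contains one of the $K$ classes $\{\lceil 2^{s+j}\rceil: j\equiv r\pmod K\}$, whose consecutive ratios tend to $2^{-K}$. So $\inf_{B\in\mathcal V}\limsup_j b_j/b_{j+1}\leq 2^{-K}$ for every $K$, i.e.\ $\mathcal V$ is thin.
\end{itemize}

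For the ``moreover'' clause, your idea of transferring $\Dil_k$ to $\Dil_{k'}$ is not an argument as stated. For $n\in[ka,(k+1)a]$ with $a\in A$, nothing controls $f(a)/a$, because $a$ need not be $\mathcal U$-generic. Within the paper the transfer is unnecessary. $d_{\mathcal V}=d_{\mathcal U}$ gives $\mathcal I_{\mathcal V}=\mathcal I_{\mathcal U}$ and $\overline d_{\mathcal V}=\overline d_{\mathcal U}$ on the same quotient. Theorem~\ref{eq:grebik-dichotomy} then transfers $\times$-invariance immediately.
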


\begin{lemma}\label{eq:B-is-A}
   If  $\mathcal U$ is not $\times$-invariant and $\mathcal V=G(\mathcal U)$, then $\mathbb B_{\mathcal U} =\mathbb B_{\mathcal V} \cong\mathbb A_{\mathcal V}$.
\end{lemma}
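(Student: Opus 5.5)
The plan is to chain together the three facts recorded above: the equality of densities from (G1), Grebík's dichotomy (Theorem~\ref{eq:grebik-dichotomy}), and the surjectivity criterion (G2).

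\emph{Step 1: $\mathbb B_{\mathcal U}=\mathbb B_{\mathcal V}$.} By (G1), $d_{\mathcal V}=d_{\mathcal U}$ as functions on $\mathcal P(\omega)$. Hence
\[
\mathcal I_{\mathcal U}=\{A:d_{\mathcal U}(A)=0\}=\{A:d_{\mathcal V}(A)=0\}=\mathcal I_{\mathcal V}.
\]
It follows that the quotient Boolean algebras $\mathcal P(\omega)/\mathcal I_{\mathcal U}$ and $\mathcal P(\omega)/\mathcal I_{\mathcal V}$ are literally the same. Moreover, the induced strictly positive measures $\overline d_{\mathcal U}$ and $\overline d_{\mathcal V}$ coincide on this algebra.

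\emph{Step 2: countable additivity of $\overline d_{\mathcal V}$.} We apply Theorem~\ref{eq:grebik-dichotomy} twice.
\begin{itemize}
\item Since $\mathcal U$ is not $\times$-invariant, $\overline d_{\mathcal U}$ is countably additive on $\mathbb B_{\mathcal U}$.
\item The second part of (G1) gives that $\mathcal V$ is not $\times$-invariant either. So, applying the theorem to $\mathcal V$, $\overline d_{\mathcal V}$ is countably additive on $\mathbb B_{\mathcal V}$.
\end{itemize}
Alternatively, this second application can be skipped: by Step~1 the measure $\overline d_{\mathcal V}$ is the same as $\overline d_{\mathcal U}$ on the same algebra, so it is countably additive directly.

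\emph{Step 3: the isomorphism.} Since $\mathcal V$ is thin by (G1), statement (G2) applies. The canonical map $e_{\mathcal V}\colon\mathbb B_{\mathcal V}\to\mathbb A_{\mathcal V}$ of \eqref{eq:canonical-e} is onto, because $\overline d_{\mathcal V}$ is countably additive. We showed before Theorem~\ref{thm:grebik-reduction} that $e_{\mathcal V}$ is an injective, measure-preserving Boolean embedding. Therefore it is a Boolean isomorphism, and even a measure-algebra isomorphism, onto $\mathbb A_{\mathcal V}$. This gives $\mathbb B_{\mathcal U}=\mathbb B_{\mathcal V}\cong\mathbb A_{\mathcal V}$.

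There is no real obstacle here, since all the depth lies in Grebík's results cited above. The only point needing care is the interpretation of ``$=$'' in Step~1. It holds because the two ideals coincide, and not merely because the quotient algebras are isomorphic.
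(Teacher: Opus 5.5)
Your proposal is correct and follows the paper's argument: (G1) gives that $\mathcal V$ is thin and not $\times$-invariant and that $d_{\mathcal V}=d_{\mathcal U}$; Greb\'ik's dichotomy then gives countable additivity of $\overline d_{\mathcal V}$, and (G2) makes the injective, measure-preserving embedding $e_{\mathcal V}$ onto. Your optional shortcut in Step~2 transfers countable additivity from $\overline d_{\mathcal U}$ using $\mathcal I_{\mathcal U}=\mathcal I_{\mathcal V}$; it is a valid minor variant that does not need the ``moreover'' clause of (G1).
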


\begin{proof}
    By (G1) of Theorem \ref{thm:grebik-reduction}, $\mathcal V$ is thin and not $\times$-invariant. It follows from Theorem \ref{eq:grebik-dichotomy} that
  $\overline d_{\mathcal V}$ is countably additive. By (G2) of Theorem \ref{thm:grebik-reduction}, $e_{\mathcal V}$ is onto. Since $d_{\mathcal V}=d_{\mathcal U}$, one has
  $\mathcal I_{\mathcal V}=\mathcal I_{\mathcal U}$ and hence
  $\mathbb B_{\mathcal U}=\mathbb B_{\mathcal V}\cong\mathbb A_{\mathcal V}$.
\end{proof}

Now we will identify the probability algebra $\mathbb A_{\mathcal V}$. For a probability algebra $\mathbb A$, its \textit{Maharam type} is the density character of the
metric space $(\mathbb A,\rho)$, where $\rho(a,b)=m(a\mathbin\triangle b)$. Also, $\mathbb A$ is \emph{homogeneous} if every nonzero principal algebra $ \mathbb A\mathbin\upharpoonright a =\{b\in\mathbb A:b\leq a\}$ has the same Maharam type as $\mathbb A$.  Let $\Balg(\kappa)$ denote the standard homogeneous probability algebra of Maharam type $\kappa$, equivalently the measure algebra of the product probability space $2^\kappa$.

\begin{lemma}
\label{lem:ultraproduct-type}
For every free ultrafilter $\mathcal V$, the probability algebra $\mathbb A_{\mathcal V}$ is atomless and homogeneous of Maharam type $\cardc$.  Consequently, $\mathbb A_{\mathcal V}\cong\Balg(\cardc)$ as probability algebras.
\end{lemma}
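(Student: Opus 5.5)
The plan is to verify three properties of $\mathbb A_{\mathcal V}$ in turn: it is atomless, every nonzero principal ideal has Maharam type at least $\cardc$, and the whole algebra has Maharam type at most $\cardc$. Maharam's theorem then finishes the argument. We know that $\mathbb A_{\mathcal V}$ is a complete probability algebra, and by the classification theorem a homogeneous probability algebra of infinite Maharam type $\kappa$ is isomorphic to $\Balg(\kappa)$. So it suffices to show that $\mathbb A_{\mathcal V}$ is atomless and that every nonzero principal ideal $\mathbb A_{\mathcal V}\upharpoonright a$ has density character exactly $\cardc$ in the metric $\rho_{\mathcal V}$.

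\emph{Atomlessness.} Let $a=[(F_n)]$ with $m_{\mathcal V}(a)=t>0$. In each coordinate, split $F_n$ into two halves $G_n,H_n$ with $\bigl||G_n|-|H_n|\bigr|\le 1$. Then $b=[(G_n)]\le a$ and $m_{\mathcal V}(b)=t/2$, because the error $1/n$ tends to $0$. Iterating this, or splitting into fractions $\lfloor s|F_n|\rfloor$, shows that $m_{\mathcal V}$ takes every value in $[0,t]$ below $a$. In particular, $\mathbb A_{\mathcal V}$ has no atoms.

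\emph{Upper bound.} The product $\prod_n\mathcal P(\underline n)$ has cardinality $\cardc$. Hence $|\mathbb A_{\mathcal V}|\le\cardc$, and the density character is at most $\cardc$.

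\emph{Lower bound in every principal ideal.} This is the main step. Fix $a=[(F_n)]$ with $m_{\mathcal V}(a)=t>0$. On the set $\{n:|F_n|\ge tn/2\}\in\mathcal V$, we have $|F_n|\to\infty$. Inside $F_n$ we place a copy of the cube $\{0,1\}^{k_n}$, where $k_n\to\infty$ slowly. Concretely, partition $F_n$ into $2^{k_n}$ blocks of nearly equal size, indexed by $\{0,1\}^{k_n}$, with $k_n=\lfloor\log_2\log_2|F_n|\rfloor$ so that rounding errors vanish. For each $f\in 2^{\omega}$ we want an element $b_f\le a$ such that $m_{\mathcal V}(b_f\triangle b_g)$ is bounded below for $f\ne g$. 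The independent-coordinate construction is the natural choice. Index a family of $\cardc$ functions $f:\omega\to\omega$ that are eventually different, for example branches of $2^{<\omega}$ coded into $\omega$. At level $n$, let $E^f_n$ be the union of the blocks whose coordinate number $f(n)\bmod k_n$ equals $1$; here we need some care so that, for $f\neq g$, one has $f(n)\ne g(n)\pmod{k_n}$ for $\mathcal V$-almost all $n$. One way is to take $f$ ranging over an almost disjoint family of branches and use coordinate $\sigma_f(n)\in\{0,\dots,k_n-1\}$, where $\sigma_f(n)$ encodes $f\restriction \lfloor\log_2 k_n\rfloor$. Distinct branches then eventually give distinct coordinates. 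Independent coordinates of the cube yield $|E^f_n\triangle E^g_n|\approx|F_n|/2$, so $\rho_{\mathcal V}(b_f,b_g)\ge t/2$, using $\mathcal V$ free. We obtain a $t/2$-separated family of size $\cardc$ below $a$, hence density at least $\cardc$. This combinatorial choice of eventually distinct coordinates, together with the rounding estimates, is the main obstacle. The rest is routine.

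Combining the three steps, $\mathbb A_{\mathcal V}$ is atomless and homogeneous of Maharam type $\cardc$. Maharam's theorem, as in \cite{Fremlin}, then gives $\mathbb A_{\mathcal V}\cong\Balg(\cardc)$.
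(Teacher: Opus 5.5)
Your proposal is correct and follows the paper's proof step for step: the cardinality bound $|\mathbb A_{\mathcal V}|\le\cardc$, a uniformly separated family of size $\cardc$ below every nonzero $a$, halving each $F_n$ for atomlessness, and then Maharam's theorem. The only difference is the separated family: the paper takes $H_n\subseteq F_n$ of size $2^{q_n}$, identifies it with $\mathbb F_2^{q_n}$, and uses the parity sets $\{h:\langle r\upharpoonright q_n,\theta_n(h)\rangle=1\}$ for $r\in 2^\omega$, so any two distinct $r,s$ give exactly balanced symmetric differences with no encoding or rounding (separation $\alpha/4$); your single-coordinate sets on a doubly-logarithmic cube work equally well (separation $t/2$), but they need the branch encoding $\sigma_f(n)$ and the rounding estimates.
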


\begin{proof}
We divide the proof into four steps.

\smallskip
\noindent
\emph{Step 1: the upper bound.}
Every element of $\mathbb A_{\mathcal V}$ is represented by a sequence
$(F_n)$ with $F_n\subseteq\underline n$.  Therefore
\[
 |\mathbb A_{\mathcal V}|
 \leq\left|\prod_{n\geq1}\mathcal P(\underline n)\right|
 \leq\aleph_0^{\aleph_0}=\cardc.
\]
In particular, the metric density of every subspace of
$\mathbb A_{\mathcal V}$ is at most $\cardc$.

\smallskip
\noindent
\emph{Step 2: a separated family below every nonzero element.}
Let $a\in\mathbb A_{\mathcal V}$ be nonzero.  Choose a representative $a=[\mathbf F]_{\mathcal N_{\mathcal V}}$ with $\mathbf F=(F_n)_{n\geq1}$ and $F_n\subseteq\underline n$ for each $n\in\omega$. Put
$ \alpha=m_{\mathcal V}(a)
 =\displaystyle\mathcal V-\lim_n\frac{|F_n|}{n}>0.$ The definition of $\mathcal V$-limit gives that $ L_0=\left\{n\in\omega:\frac{|F_n|}{n}\geq\frac\alpha2\right\}\in\mathcal V$.

For $n\in L_0$, let $N_n$ be the largest power of two not exceeding
$|F_n|$.  Write $N_n=2^{q_n}$, choose
$H_n\subseteq F_n$ with $|H_n|=N_n$, and fix a bijection $\theta_n:H_n\longrightarrow\mathbb F_2^{q_n}.$ For indices outside $L_0$, put $H_n=\varnothing$ and $q_n=0$.
 Let $q\geq 1$ be given. Then, $A_0=\{n\in\omega:n>\frac{4}{\alpha}q\}\in\mathcal V$. On the other hand, if $n\in L_0\cap A_0$, then $N_n>|F_n|/2$, and so $N_n>\dfrac{|F_n|}{2}\geq\dfrac{\alpha n}{4}>q$.
 It follows that $\{n\in\omega:2^{q_n}>q\}\in\mathcal V$. Hence, $q_n\to\infty$ along $\mathcal V$: for every $q\in\omega$, $\{n\in\omega:q_n>q\}\in\mathcal V$.

For each binary sequence $r\in2^\omega$, define a sequence of finite
sets $\mathbf E_r=(E_r(n))_{n\geq1}$ by
\begin{equation}
 E_r(n)
 =\left\{
   h\in H_n:
   \left\langle r\mathbin\upharpoonright q_n,\theta_n(h)\right\rangle=1
  \right\},\quad\text{where}\quad  \langle u,v\rangle
 =\sum_{j<q_n}u(j)v(j)\pmod2
 \label{eq:Er-definition}
\end{equation}
is the standard bilinear form on $\mathbb F_2^{q_n}$.  Put $e_r=[\mathbf E_r]_{\mathcal N_{\mathcal V}} \in\mathbb A_{\mathcal V}.$ Since $E_r(n)\subseteq H_n\subseteq F_n$ for every $n\in\omega$, we have
$e_r\leq a$.

Let $r,s\in2^\omega$ be distinct, and choose $j_0$ with
$r(j_0)\neq s(j_0)$.  Since $q_n\to\infty$ along $\mathcal V$, $L_{r,s}=\{n\in\omega:q_n>j_0\}\in\mathcal V$. Fix $n\in L_{r,s}\cap L_0$ and put $ u=r\mathbin\upharpoonright q_n$ and $ v=s\mathbin\upharpoonright q_n.$ Then $u\neq v$, so $u+v\neq0$ in $\mathbb F_2^{q_n}$.  From \eqref{eq:Er-definition},
\begin{align*}
       E_r(n)\mathbin\triangle E_s(n)&=\{ h\in H_n: \langle u,\theta_n(h)\rangle \neq\langle v,\theta_n(h)\rangle \} \\
 &=\{ h\in H_n: \langle u+v,\theta_n(h)\rangle=1\}\\
 &=\theta_n^{-1}(\{z\in\mathbb F_2^{q_n}:\langle u+v,z\rangle=1\}).
\end{align*}
The nonzero linear functional $ w\in \mathbb F_2^{q_n}\longmapsto\langle u+v,w\rangle\in\mathbb F_2$ has a kernel of cardinality $2^{q_n-1}$. Its other fibre has the same
cardinality.  Consequently,
\begin{equation*}
 |E_r(n)\mathbin\triangle E_s(n)|
 =2^{q_n-1}
 =\frac{N_n}{2}
 \geq\frac{|F_n|}{4}, \quad \text{for all }n\in L_{r,s}\cap L_0. 
\end{equation*}
It follows from the above equation by taking the $\mathcal V$-limit and using the definition of $\alpha$ that
\begin{equation}
 \rho_{\mathcal V}(e_r,e_s)=m_{\mathcal V}(e_r\mathbin\triangle e_s)=\mathcal V-\lim_n\frac{|E_r(n)\triangle E_s(n)|}{n}\geq\mathcal V-\lim_n\frac{|F_n|}{4n}=\frac\alpha4.
 \label{eq:Er-separation}
\end{equation}
Thus $ \{e_r:r\in2^\omega\}
 \subseteq\mathbb A_{\mathcal V}\mathbin\upharpoonright a$ is a uniformly separated family of cardinality $\cardc$.

\smallskip
\noindent
\emph{Step 3: homogeneity and atomlessness.}
It is an standard fact that in a metric space, a $\delta$-separated family has cardinality at most
the density character: each ball of radius $\delta/3$ around a member of
the family must contain a different point of any dense set.  Therefore
\eqref{eq:Er-separation} implies $\dens(\mathbb A_{\mathcal V}\mathbin\upharpoonright a)
 \geq\cardc.$ Together with Step~1, this yields $\dens(\mathbb A_{\mathcal V}\mathbin\upharpoonright a)
 =\cardc$. Hence the algebra is homogeneous of Maharam type $\cardc$.

To see atomlessness directly, start again with a nonzero
$a=[(F_n)]_{\mathcal N_{\mathcal V}}$ of measure $\alpha$.  For each $n\in\omega$ take a partition $F_n=F_n^0\mathbin{\dot\cup}F_n^1,$ where $\bigl||F_n^0|-|F_n^1|\bigr|\leq1.$ Then, $\displaystyle\left|\frac{|F_n^i|}{n}-\frac{|F_n|}{2n} \right|\leq\frac1{2n}$ for $i=0,1$. Since $\mathcal V$ is free, $\mathcal V-\lim_{n}1/n=0$, and therefore $m_{\mathcal V}([(F_n^0)])=m_{\mathcal V}([(F_n^1)])=\frac\alpha2>0.$ Both classes are nonzero proper elements below $a$, so $a$ is not an
atom.

\smallskip
\noindent
\emph{Step 4: Maharam's classification.}
We have proved that $\mathbb A_{\mathcal V}$ is a complete atomless
homogeneous probability algebra of Maharam type $\cardc$.  Maharam's
classification theorem \cite{Fremlin} now gives the measure-preserving
Boolean isomorphism
$\mathbb A_{\mathcal V}\cong\Balg(\cardc)$.
\end{proof}

  A consequence of the M\"agerl--Namioka intersection-number criterion is
\begin{equation}
 \Stone(\Balg(\omega_1))
 \text{ is not approximable}.
 \label{eq:Bomega1-not-approximable}
\end{equation}

See \cite{MagerlNamioka} and the discussion accompanying the
approximability criterion in \cite{HrusakSaenz}.

We are ready to prove the main theorem of this section. 

\begin{theorem}
\label{thm:nonapprox-support}
Let $\mathcal U$ be a free ultrafilter which is not
$\times$-invariant.  Then, $ \supp\widetilde d_{\mathcal U}
 \cong\Stone(\mathbb B_{\mathcal U})$ is not approximable.  Consequently, $ c_{0,\mathcal I_{\mathcal U}}$ is not complemented in $\ell_\infty.$
\end{theorem}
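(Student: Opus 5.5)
The plan is to chain the results assembled in this section. By Lemma~\ref{lem:support-stone}, $\supp\widetilde d_{\mathcal U}=K_{\mathcal I_{\mathcal U}}\cong\Stone(\mathbb B_{\mathcal U})$, so it suffices to show that $\Stone(\mathbb B_{\mathcal U})$ is not approximable. Since $\mathcal U$ is not $\times$-invariant, Lemma~\ref{eq:B-is-A} with $\mathcal V=G(\mathcal U)$ gives $\mathbb B_{\mathcal U}\cong\mathbb A_{\mathcal V}$. Lemma~\ref{lem:ultraproduct-type} then gives $\mathbb A_{\mathcal V}\cong\Balg(\cardc)$. Hence $\Stone(\mathbb B_{\mathcal U})\cong\Stone(\Balg(\cardc))$, and everything reduces to showing that $\Stone(\Balg(\cardc))$ is not approximable.

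For this reduction I would use \eqref{eq:Bomega1-not-approximable} together with Lemma~\ref{lem:images}, arguing by contrapositive: if $\Stone(\Balg(\cardc))$ were approximable, so would be every continuous image of it. I would therefore produce a continuous surjection $\Stone(\Balg(\cardc))\to\Stone(\Balg(\omega_1))$. By Stone duality, such a surjection corresponds to an injective Boolean homomorphism $\Balg(\omega_1)\hookrightarrow\Balg(\cardc)$. Since $\omega_1\le\cardc$, this comes from the coordinate embedding: the measure algebra of $2^{\omega_1}$ embeds into that of $2^{\cardc}$ by pulling back along the projection $2^{\cardc}\to 2^{\omega_1}$ onto the first $\omega_1$ coordinates. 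This pullback is measure preserving, hence injective on measure classes. The dual map of Stone spaces is then surjective, because an injective homomorphism of Boolean algebras dualizes to a surjection. By Lemma~\ref{lem:images}, $\Stone(\Balg(\omega_1))$ would then be approximable, contradicting \eqref{eq:Bomega1-not-approximable}.

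Finally, $K_{\mathcal I_{\mathcal U}}$ is not approximable, so Theorem~\ref{complemented-approximable} ((1)$\Leftrightarrow$(3)) shows that $\mathcal I_{\mathcal U}$ is not complemented, i.e.\ $c_{0,\mathcal I_{\mathcal U}}$ is not complemented in $\ell_\infty$.

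The main obstacle is the embedding step. Two points need checking: that the Boolean isomorphisms above really induce homeomorphisms of Stone spaces (they do, by Stone duality), and that the coordinate embedding $\Balg(\omega_1)\to\Balg(\cardc)$ is a well-defined injective Boolean homomorphism. If the product description is inconvenient, I would instead use Maharam's theorem directly: $\Balg(\cardc)$ contains a closed subalgebra of Maharam type $\omega_1$, generated by $\omega_1$ stochastically independent elements of measure $\tfrac12$, and this subalgebra is homogeneous, so it is isomorphic to $\Balg(\omega_1)$.
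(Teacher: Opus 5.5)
Your proposal is correct and follows the paper's proof essentially step for step. Like the paper, you combine Lemma~\ref{eq:B-is-A} and Lemma~\ref{lem:ultraproduct-type} to get $\mathbb B_{\mathcal U}\cong\Balg(\cardc)$, use the natural embedding $\Balg(\omega_1)\hookrightarrow\Balg(\cardc)$ and its dual surjection $p\mapsto p\cap\Balg(\omega_1)$ with Lemma~\ref{lem:images} against \eqref{eq:Bomega1-not-approximable}, and conclude via Lemma~\ref{lem:support-stone} and Theorem~\ref{complemented-approximable}; your justification of the embedding (pullback along the coordinate projection, or Maharam's theorem) simply fills in a detail the paper states without proof.
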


\begin{proof}
Set $\mathcal V=G(\mathcal U)$ as in Theorem~\ref{thm:grebik-reduction}.  Because $\mathcal U$ is not $\times$-invariant, Lemma \ref{eq:B-is-A} implies that $ \mathbb B_{\mathcal U} =\mathbb B_{\mathcal V}  \cong\mathbb A_{\mathcal V}.$ By Lemma~\ref{lem:ultraproduct-type} we obtain $ \mathbb B_{\mathcal U}\cong\Balg(\cardc).$ Since $\omega_1\leq\cardc$, the measure algebra on the first $\omega_1$ coordinates is naturally a Boolean subalgebra $ \Balg(\omega_1)\hookrightarrow\Balg(\cardc).$ Now by the Stone duality, the restriction map 
\[
 \rho:\Stone(\Balg(\cardc))\longrightarrow
       \Stone(\Balg(\omega_1)),
 \qquad
 \rho(p)=p\cap\Balg(\omega_1)
\]
defines a continuous map. Since every ultrafilter on the subalgebra extends to an ultrafilter on the larger Boolean algebra, $\rho$ is onto.

If $\Stone(\mathbb B_{\mathcal U})$ were approximable, then $\Stone(\Balg(\cardc))$ would be approximable. Since the map $\rho:\Stone(\Balg(\cardc))\rightarrow\Stone(\Balg(\omega_1))$ is a surjection,
by Lemma~\ref{lem:images} we conclude that $\Stone(\Balg(\omega_1))$ is approximable, which contradicts \eqref{eq:Bomega1-not-approximable}.  Thus,
$\Stone(\mathbb B_{\mathcal U})$ is not approximable. Therefore, $K_{\mathcal I_{\mathcal U}}$ is not approximable by Lemma~\ref{lem:support-stone}. Now, the result follows from Theorem \ref{complemented-approximable}.
\end{proof}

The next corollary provides an explicit ZFC class of ultrafilters to which
Theorem~\ref{thm:nonapprox-support} applies. 

\begin{corollary}
\label{cor:lacunary-U}
Let $T=\{t_0<t_1<\cdots\}\subseteq\omega$ satisfy $\displaystyle\frac{t_j}{t_{j+1}}\longrightarrow0.$ If $\mathcal U$ is any free ultrafilter containing $T$, then
$\supp\widetilde d_{\mathcal U}$ is not approximable and
$c_{0,\mathcal I_{\mathcal U}}$ is not complemented in
$\ell_\infty$.
\end{corollary}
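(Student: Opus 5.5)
The plan is to reduce Corollary~\ref{cor:lacunary-U} to Theorem~\ref{thm:nonapprox-support}. Once that theorem is available, both conclusions (non-approximability of $\supp\widetilde d_{\mathcal U}$ and non-complementation of $c_{0,\mathcal I_{\mathcal U}}$) follow directly. So the only thing to prove is that every free ultrafilter $\mathcal U$ with $T\in\mathcal U$ is \emph{not} $\times$-invariant. By definition, this means exhibiting a single $A\in\mathcal U$ such that $\Dil_k(A)\notin\mathcal U$ for every integer $k>1$. The natural candidate is $A=T$ itself, or a cofinite piece of $T$, which still lies in $\mathcal U$ because $\mathcal U$ is free.

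The key step is to show that, for each fixed $k>1$, the set $\Dil_k(T)\cap T$ is finite. Since $\mathcal U$ is free and contains $T$, it cannot contain both $T$ and a set whose intersection with $T$ is finite. Hence $\Dil_k(T)\notin\mathcal U$ for every $k$, and $\mathcal U$ is not $\times$-invariant.

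To prove the finiteness, suppose $t_i\in[kt_j,(k+1)t_j]$ for some $j$. If $t_j\geq1$, this forces $t_i>t_j$, so $i\geq j+1$. Then
\[
t_{j+1}\leq t_i\leq (k+1)t_j,
\]
that is, $t_j/t_{j+1}\geq 1/(k+1)$. Because $t_j/t_{j+1}\to0$, this inequality can hold for only finitely many $j$. For each such $j$, the interval $[kt_j,(k+1)t_j]$ is finite and so contains only finitely many $t_i$. The degenerate case $t_j=0$ contributes only the point $0$. Altogether, $\Dil_k(T)\cap T$ is finite.

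I do not expect a genuine obstacle, since the argument is essentially an elementary lacunarity computation. The one point needing care is that $\times$-invariance is quantified over all $A\in\mathcal U$ with an existential $k$. Its negation therefore requires one $A$ that works uniformly for all $k>1$, and the finiteness argument above provides exactly this, because the bound $t_j/t_{j+1}\geq 1/(k+1)$ fails for all large $j$ whatever $k$ is. Theorem~\ref{thm:nonapprox-support} then applies.
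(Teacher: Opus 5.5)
Your proposal is correct and follows essentially the same route as the paper. For each $k>1$ you show that $T\cap\Dil_k(T)$ is finite, because $t_{j+1}>(k+1)t_j$ for all large $j$; hence $T$ witnesses that $\mathcal U$ is not $\times$-invariant, and Theorem~\ref{thm:nonapprox-support} applies. Your explicit handling of the degenerate case $t_j=0$ is slightly more careful than the paper, which tacitly uses $t_m>t_j$ even though $t_0$ could be $0$.
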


\begin{proof}
We verify directly that $\mathcal U$ is not $\times$-invariant.  Fix an
integer $k>1$.  By hypothesis, there is $j_0$ such that
\begin{equation}
 t_{j+1}>(k+1)t_j
 \qquad(j\geq j_0).
 \label{eq:lacunary-gap}
\end{equation}
We claim that $T\cap\Dil_k(T)$ is finite.  Indeed, suppose that $j\geq j_0$ and that some $t_m$ belongs to the interval
$[kt_j,(k+1)t_j]$.  Since $k>1$, we have $t_m>t_j$, so $m\geq j+1$. But then \eqref{eq:lacunary-gap} gives $ t_m\geq t_{j+1}>(k+1)t_j,$ contradicting $t_m\in[kt_j,(k+1)t_j]$.  Hence, only the finitely many intervals corresponding to $j<j_0$ can meet $T$, which proves the claim.

Now $T\in\mathcal U$.  If $\Dil_k(T)$ also belonged to $\mathcal U$,
then their intersection would belong to $\mathcal U$.  This is impossible
because $\mathcal U$ is free and $T\cap\Dil_k(T)$ is finite.  Therefore, $\Dil_k(T)\notin\mathcal U$ for every $k>1.$ Thus, $T\in\mathcal U$ witnesses that $\mathcal U$ is not $\times$-invariant. The conclusion follows from Theorem~\ref{thm:nonapprox-support}.
\end{proof}

For a concrete choice, take $t_j=2^{2^j}.$ Then $t_j/t_{j+1}=2^{-2^j}\to0$.  The filter generated by $T$ and the
cofinite subsets of $\omega$ is free and proper, so it extends to a free
ultrafilter.  Every such extension satisfies
Corollary~\ref{cor:lacunary-U}.

\begin{remark}
\label{rem:times-invariant-open}
If $\mathcal U$ is $\times$-invariant, then Theorem \ref{eq:grebik-dichotomy} says that $\overline d_{\mathcal U}$ is not countably additive on
$\mathbb B_{\mathcal U}$.  The ultrafilter $\mathcal V=G(\mathcal U)$ is thin and has the same
density, from (G2) of Theorem \ref{thm:grebik-reduction}, the canonical
embedding $ e_{\mathcal V}:\mathbb B_{\mathcal V}\longrightarrow
 \mathbb A_{\mathcal V}$ is not onto.  Thus, the identification
$\mathbb B_{\mathcal U}\cong\Balg(\cardc)$ used in
Theorem~\ref{thm:nonapprox-support} does not hold.  Thus, for the $\times$-invariant case, the approximability conclusion remains open. 
\end{remark}

\section{Complemented copies of \texorpdfstring{$c_0$}{c0} in \texorpdfstring{$c_{0,\mathcal I}(X)$}{c0I(X)}}\label{section complemented copies of c0}

In this section we study the complementation of $c_0$ in the Banach space $c_{0,\I}(X)$. In the setting of $\I=\mathcal P(\omega)$, a result of \cite{leung-rabiger} states that if $X$ is an infinite dimensional Banach space, then $c_0\xhookrightarrow{c}\ell_\infty(X)=c_{0,\I}(X)$ if and only if $c_0\xhookrightarrow{c} X$. So, it is natural to ask:

\begin{question}\label{question complementation}
  Let $X$ be an infinite dimensional Banach space. For which ideals $\I$ do we have
  \begin{equation}\label{equation question complementation}
      c_0\xhookrightarrow{c} c_{0,\mathcal I}(X)\quad\Longleftrightarrow\quad c_0\xhookrightarrow{c} X\ ?
  \end{equation}
\end{question}

It is straightforward that, if either $c_0 \xhookrightarrow{c} c_{0,\I}$ or $c_0 \xhookrightarrow{c} X$, then $c_0 \xhookrightarrow{c} c_{0,\mathcal I}(X)$ because $c_{0,\I}$ and $X$ are complemented in $c_{0,\mathcal I}(X)$. 
On the other hand, not every ideal satisfies the statement \eqref{equation question complementation}. Indeed, the ideal $\I=\fin$ does not satisfy the statement \eqref{equation question complementation}: if $X$ is an infinite dimensional Banach space, then $c_{0,\mathcal I}\stackrel{\vee}{\otimes} X$ contains a complemented copy of $c_0$ by the main theorem of \cite{cembranos} (recall that if $X$ and $Y$ are Banach spaces, $X\stackrel{\vee}{\otimes} Y$ denotes the \textit{injective tensor product} of $X$ and $Y$, see \cite[Chapter 1]{diestel-and-all}). Also, we know that $c_{0,\mathcal I}\stackrel{\vee}{\otimes} X\equiv c_{0,\I}(X)$ by \cite[Theorem 1.1.11]{diestel-and-all}. Thus, $c_0\xhookrightarrow{c}c_{0,\I}(X)$. Moreover, the following result holds.

\begin{proposition}\label{equivalent complementation}
    Let $\mathcal I$ be an ideal on $\mathbb N$ and $X$ be a Banach space. Then, $c_0\xhookrightarrow{c} c_{0,\mathcal I}(X)$ if and only if
    $c_0\xhookrightarrow{c} c_{0,\mathcal I^\omega}(X)$.
\end{proposition}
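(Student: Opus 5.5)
The plan is to identify $c_{0,\mathcal I^\omega}(X)$ concretely and compare it with $c_{0,\mathcal I}(X)$ through complemented embeddings in both directions.

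Realize $\mathcal I^\omega$ on $\mathbb N\times\mathbb N$ with $K_n=\{n\}\times\mathbb N$ and $\mathcal I_n$ a copy of $\mathcal I$. A set $A$ lies in $\mathcal I^\omega$ exactly when every section $A_{(n)}$ lies in $\mathcal I$. Hence a bounded family $(x_{n,k})$ is $\mathcal I^\omega$-null exactly when each row $(x_{n,k})_k$ is $\mathcal I$-null. This gives an isometry
\[
c_{0,\mathcal I^\omega}(X)\equiv \ell_\infty\big(c_{0,\mathcal I}(X)\big),
\]
the $\ell_\infty$-sum of countably many copies of $c_{0,\mathcal I}(X)$. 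The only thing to check is that boundedness is uniform over all rows, which holds because we work inside $\ell_\infty(X)$ indexed by $\mathbb N\times\mathbb N$.

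The implication ($\Rightarrow$) is then immediate. The space $c_{0,\mathcal I}(X)$ is $1$-complemented in $\ell_\infty(c_{0,\mathcal I}(X))$: embed it as the first coordinate and project onto that coordinate. A complemented copy of $c_0$ in $c_{0,\mathcal I}(X)$ therefore transfers to $c_{0,\mathcal I^\omega}(X)$.

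For ($\Leftarrow$), suppose $c_0\xhookrightarrow{c}\ell_\infty(Z)$ with $Z=c_{0,\mathcal I}(X)$, and apply the Leung--Räbiger theorem \cite{leung-rabiger} cited above: for an infinite-dimensional Banach space $Z$, $c_0\xhookrightarrow{c}\ell_\infty(Z)$ if and only if $c_0\xhookrightarrow{c}Z$. This yields $c_0\xhookrightarrow{c}c_{0,\mathcal I}(X)$. The degenerate case where $Z$ is finite-dimensional must be handled separately. That happens only if $X=\{0\}$, or if $X$ is finite-dimensional and $\mathcal I=\mathcal P(\omega)$ restricted to a finite set. Since ideals here contain $\fin$ and live on infinite $\mathbb N$, $c_{0,\mathcal I}\supseteq c_0$ is infinite-dimensional whenever $X\neq\{0\}$. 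If $X=\{0\}$, both sides are trivial and neither contains $c_0$, so the equivalence holds vacuously.

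The main obstacle is making sure the Leung--Räbiger result applies as stated to an arbitrary infinite-dimensional Banach space $Z$, not only under extra hypotheses. If it were only available in a weaker form, I would fall back on a direct gliding-hump argument. Let $P:\ell_\infty(Z)\to c_0$ be a projection onto a copy of $c_0$ with unit vectors $u_j$. Use finitely additive measures (the dual description in Section~\ref{section dual of c0I}) on the coordinate functionals $P^*e_j^*$ to find a single coordinate $n$ where the restricted functionals still dominate a subsequence of the $u_j$. That yields $c_0$ complemented in $Z$. This is exactly the technique previewed for Theorem~\ref{thm-grothendieck}, so the direct citation route is preferable here.
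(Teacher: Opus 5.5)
Your proposal is correct and follows the paper's proof. Both identify $c_{0,\mathcal I^\omega}(X)$ isometrically with $\ell_\infty(c_{0,\mathcal I}(X))$ (the paper cites this from earlier work, while you verify it directly via sections), and both invoke Leung--R\"abiger for $(\Leftarrow)$. Your $(\Rightarrow)$ is a slight streamlining of the paper's, since you use that $c_{0,\mathcal I}(X)$ is $1$-complemented as one coordinate of the $\ell_\infty$-sum instead of passing through $\ell_\infty(c_0)$; your check that $c_{0,\mathcal I}(X)$ is infinite-dimensional whenever $X\neq\{0\}$ is right, though the aside about ``$\mathcal P(\omega)$ restricted to a finite set'' is spurious.
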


\begin{proof}
    Suppose that $c_0\xhookrightarrow{c} c_{0,\mathcal I}(X)$. Then, $\ell_\infty(c_0)\xhookrightarrow{c} \ell_\infty(c_{0,\mathcal I}(X))\cong c_{0,\mathcal I^\omega}(X)$
    by \cite[Theorem 5.2]{rincon-uzcategui}. Since $c_0\xhookrightarrow{c}\ell_\infty(c_0)$, we conclude that  $c_0\xhookrightarrow{c} c_{0,\mathcal I^\omega}(X)$.
    
    Conversely, assume that $c_0\xhookrightarrow{c} c_{0,\mathcal I^\omega}(X)$. Then, $c_0\xhookrightarrow{c} \ell_\infty(c_{0,\mathcal I}(X))$. It follows from
    \cite[Main Theorem]{leung-rabiger} that $c_0\xhookrightarrow{c} c_{0,\mathcal I}(X)$.
\end{proof}

Thus, if $X$ is a Banach space such that $c_0\not\xhookrightarrow{c}X$ and $\mathcal I=\fin^\omega$, since $c_0\xhookrightarrow{c}c_0(X)=c_{0,\fin}(X)$, by Proposition \ref{equivalent complementation} we obtain $c_0\xhookrightarrow{c}c_{0,\mathcal I}(X)$.  Also, if $\mathcal I$ is an ideal on $\omega$ and $X$ is a Banach space, then $c_0\xhookrightarrow{c} c_{0,\mathcal I^{\omega\perp}}(X)$. Indeed, $c_{0,\mathcal I^{\omega\perp}}(X)\equiv c_0(c_{0,\mathcal I^\perp}(X))\equiv c_0 \stackrel{\vee}{\otimes} c_{0,\mathcal I^\perp}(X)$ by \cite[Theorem 5.4]{rincon-uzcategui} and \cite[Theorem 1.1.11]{diestel-and-all}. The result follows from the main theorem of \cite{cembranos}.

Therefore, we need an additional notion that allows us to establish the equivalence we seek. According to \cite{hrusák2026grothendieckidealsellinfty}, an ideal $\I$ on $\mathbb N$ is called \textit{Grothendieck} if $c_{0,\mathcal I}$ has the Grothendieck property, that is, every sequence of elements of $c_{0,\I}^*$ which is weak$^*$-convergent is weak-convergent. One of the results of \cite{hrusák2026grothendieckidealsellinfty} establishes that
\begin{equation*}
    \I \text{ is Grothendieck }\quad\Longleftrightarrow\quad c_0\not\xhookrightarrow{c} c_{0,\I}.
\end{equation*}

Before we provide an answer to the Question \ref{question complementation}, we need to recall the celebrated Schlumprecht's characterizacion of complemented copies of $c_0$: 

\begin{theorem}\cite{schlumprecht1987limitierte}\label{schlumprecht theorem}
    Let $X$ be a Banach space. Then, $c_0\xhookrightarrow{c} X$ if and only if there are a sequence $(x_n)$ in $X$ equivalent to the canonical basis of $c_0$ and a weak$^*$-null sequence of functionals $(x_n^*)$ in $X^*$ such that $\inf_{n\in\mathbb N}|x_n^*(x_n)|>0.$
\end{theorem}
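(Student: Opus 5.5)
The statement is the cited theorem of Schlumprecht, so I would reprove both implications directly. The forward direction is a dualisation of the projection. The reverse direction is a gliding-hump construction followed by a small-perturbation argument.

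\emph{($\Rightarrow$).} Suppose $Y\subseteq X$ is complemented by a projection $P\colon X\to Y$, and $T\colon c_0\to Y$ is an isomorphism onto $Y$. Put $x_n\doteq Te_n$; then $(x_n)$ is equivalent to the unit vector basis of $c_0$. Let $e_n^*\in\ell_1=c_0^*$ be the coordinate functionals and put $x_n^*\doteq P^*\big((T^{-1})^*e_n^*\big)$, that is, $x_n^*(x)=e_n^*(T^{-1}Px)$. For each $x\in X$ the vector $T^{-1}Px$ lies in $c_0$, so $x_n^*(x)\to0$ and $(x_n^*)$ is weak$^*$-null. Moreover $x_n^*(x_n)=e_n^*(T^{-1}Te_n)=1$, hence $\inf_n|x_n^*(x_n)|=1>0$.

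\emph{($\Leftarrow$).} Let $(x_n)$ be equivalent to the $c_0$ basis, with $(x_n^*)$ weak$^*$-null and $|x_n^*(x_n)|\geq\delta>0$. The plan has three steps.

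\begin{enumerate}
\item \emph{Choose an almost biorthogonal subsequence.} Two facts drive this. First, $x_k^*(x_n)\to0$ as $n\to\infty$ for fixed $k$, because a sequence equivalent to the $c_0$ basis is weakly null. Second, $x_n^*(x_k)\to0$ as $n\to\infty$ for fixed $k$, by weak$^*$-nullness. Using both, choose $n_1<n_2<\cdots$ inductively so that
\[
\sum_{i\neq j}\frac{|x_{n_i}^*(x_{n_j})|}{|x_{n_i}^*(x_{n_i})|}<\eta ,
\]
where $\eta>0$ is fixed small relative to the equivalence constant $C$ of $(x_n)$ with the $c_0$ basis.

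\item \emph{Define a bounded operator onto the span.} Let $Z\doteq\overline{\spn}\{x_{n_i}\}$ and let $S\colon c_0\to Z$ be the isomorphism $S(e_i)=x_{n_i}$. For $x\in X$ the sequence $\big(x_{n_i}^*(x)/x_{n_i}^*(x_{n_i})\big)_i$ lies in $c_0$ with norm at most $\delta^{-1}\sup_n\|x_n^*\|\,\|x\|$; the supremum is finite by the uniform boundedness principle. Hence
\[
Rx\doteq S\Big(\Big(\frac{x_{n_i}^*(x)}{x_{n_i}^*(x_{n_i})}\Big)_i\Big)
\]
defines a bounded operator $R\colon X\to Z$.

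\item \emph{Invert on $Z$ and form the projection.} On $Z$ one has $R|_Z=I_Z+K$. Using the estimate from step 1 and the $\ell_1$-summability of the off-diagonal terms, one gets $\|K\|\leq C\eta<1$. Therefore $R|_Z$ is invertible, and $Q\doteq(R|_Z)^{-1}R$ is a bounded projection of $X$ onto $Z\cong c_0$.
\end{enumerate}

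The main obstacle is the bookkeeping in steps 1 and 3. One must make the off-diagonal matrix small simultaneously above and below the diagonal, using the two different convergences, and then convert this into an operator-norm bound on $K$. The bound on $\|K\|$ should follow from the estimate $\|K(\sum a_jx_{n_j})\|\leq C\sup_i\sum_{j\neq i}|a_j|\,|x^*_{n_i}(x_{n_j})|/\delta$ together with $|a_j|\leq C\|\sum a_jx_{n_j}\|$.
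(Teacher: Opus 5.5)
Your proof is correct in both directions. There is no competing argument in the paper to compare it with: the paper states this result only with a citation to Schlumprecht's thesis. It then uses just the ``if'' direction, twice in the proof of Theorem~\ref{thm-grothendieck}: once in $c_{0,\mathcal I}$ with $(h_j),(\varphi_j)$, and once in $Y_A$ with $(Se_{n_k}),(g_k)$. Your argument therefore replaces the citation with the standard self-contained proof. For ($\Rightarrow$) you dualize the projection, $x_n^*=e_n^*\circ T^{-1}\circ P$. For ($\Leftarrow$) you use that $(x_n)$ is weakly null (the image of the $c_0$ basis under a bounded operator) and that $(x_n^*)$ is weak$^*$-null and hence bounded. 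This lets you extract an almost biorthogonal subsequence and then invert $R|_Z=I_Z+K$ by a Neumann series.

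The inductive choice works as you say. At stage $m$ you impose only finitely many conditions $|x^*_{n_i}(x_{n_m})|,\ |x^*_{n_m}(x_{n_i})|<\delta\eta\,2^{-m-1}/m$ for $i<m$, and each holds for all large $n_m$. The equivalence constants of $(x_n)$ pass to subsequences, so $\eta$ may be fixed first.

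Only the constants in the last step need a touch-up. The entries of $K$ are $\sum_{j\neq i}a_j x^*_{n_i}(x_{n_j})/x^*_{n_i}(x_{n_i})$. Keep these denominators rather than replacing them by $\delta$; then the bound matches your step-1 estimate. This gives $\|K\|\le\|S\|\,\|S^{-1}\|\,\eta$, i.e.\ $C^2\eta$ with your normalization of $C$, so choose $\eta<C^{-2}$.

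What your route buys over the bare citation is transparency. It shows the theorem needs only weak nullness of $(x_n)$, boundedness and weak$^*$-nullness of $(x_n^*)$, and a small perturbation of the identity on $Z$. That is exactly the form in which the paper applies it in Section~\ref{section complemented copies of c0}.
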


The next result answers Question \ref{question complementation}.

\begin{theorem}\label{thm-grothendieck}
Let $\mathcal I$ be an ideal on $\omega$ and let $X$ be a Banach space.  Then,
\[
 c_0\xhookrightarrow{c} c_{0,\mathcal I}(X)
 \quad\Longleftrightarrow\quad c_0\xhookrightarrow{c} c_{0,\I}\ \text{ or }\
 c_0\xhookrightarrow{c} X.
\]
Consequently, if $\mathcal I$ is Grothendieck, then 
\[
 c_0\xhookrightarrow{c} c_{0,\mathcal I}(X)
 \quad\Longleftrightarrow\quad
 c_0\xhookrightarrow{c} X.
\]
\end{theorem}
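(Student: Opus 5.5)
The implication from right to left is the easy one. Both $c_{0,\mathcal I}$ and $X$ are complemented in $c_{0,\mathcal I}(X)$: use $x\mapsto(x_n)\mapsto(x^*(x_n))\,e$ with $x^*(e)=1$, and the coordinate operators $R_k(x_n)=x_k$, $J_k(x)=x\,\chi_{\{k\}}$. A complemented copy of $c_0$ in either space is therefore complemented in $c_{0,\mathcal I}(X)$. The Grothendieck consequence then follows at once from the equivalence ``$\mathcal I$ Grothendieck $\iff c_0\not\xhookrightarrow{c} c_{0,\mathcal I}$'' quoted above.

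For the converse, assume $c_0\xhookrightarrow{c} c_{0,\mathcal I}(X)$ and $c_0\not\xhookrightarrow{c}X$; the goal is to produce a complemented $c_0$ in $c_{0,\mathcal I}$. By Theorem~\ref{schlumprecht theorem} there are $(\mathbf x_n)\subseteq c_{0,\mathcal I}(X)$ equivalent to the $c_0$-basis and a weak$^*$-null $(\varphi_n)\subseteq c_{0,\mathcal I}(X)^*$ with $|\varphi_n(\mathbf x_n)|\geq\delta>0$.

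Using the isometry $S$ of Section~\ref{section dual of c0I}, I would split each $\varphi_n$ as in Theorem~\ref{thm:dual-splitting}, now in the vector-valued setting:
\[
\varphi_n=\sum_k J_k^*{}^{\!*}\text{-part}\;\;\text{i.e.}\;\;\varphi_n(\mathbf y)=\sum_k \psi_{n,k}(y_k)+\varphi_n^{s}(\mathbf y).
\]
Here $\psi_{n,k}=J_k^*\varphi_n\in X^*$, with $\sum_k\|\psi_{n,k}\|\leq\|\varphi_n\|$. The singular part $\varphi_n^s$ is such that $m_{\varphi_n^s}$ vanishes on finite sets; equivalently, $\varphi_n^s(\mathbf y)$ depends only on $\mathbf y$ modulo finitely supported sequences. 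Passing to subsequences, one of three cases holds.

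\emph{Case 1: the atomic mass stays on a fixed finite set $F$ of coordinates.} Then some $k\in F$ has $|\psi_{n,k}(x_{n,k})|\geq\delta/(2|F|)$ for infinitely many $n$. Set $u_n=R_k\mathbf x_n$ and $u_n^*=\psi_{n,k}=J_k^*\varphi_n$. Then $(u_n)$ is weakly unconditionally Cauchy and seminormalized, and $(u_n^*)$ is weak$^*$-null in $X^*$, being the image of a weak$^*$-null sequence under the adjoint $J_k^*$. By Bessaga--Pe{\l}czy\'nski a further subsequence of $(u_n)$ is equivalent to the $c_0$-basis, and Theorem~\ref{schlumprecht theorem} yields $c_0\xhookrightarrow{c}X$, a contradiction.

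\emph{Case 2: the atomic mass escapes to infinity.} By a gliding hump I choose disjoint finite sets $F_n$ carrying most of the atomic part of $\varphi_n$. For $k\in F_n$ I choose norming functionals $x^*_k\in B_{X^*}$ with $x_k^*(x_{n,k})\approx\|x_{n,k}\|$. The operator $V\colon c_{0,\mathcal I}(X)\to c_{0,\mathcal I}$, $V(\mathbf y)=(x_k^*(y_k))_k$, is bounded and maps into $c_{0,\mathcal I}$. Similarly, with $e_k\in B_X$ almost norming $\psi_{n,k}$, the map $W\colon c_{0,\mathcal I}\to c_{0,\mathcal I}(X)$, $W(a)=(a_ke_k)$, is bounded. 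The vectors $V\mathbf x_n$ are then $c_0$-like, with dominating functionals $W^*\varphi_n$ that are weak$^*$-null and satisfy $|(W^*\varphi_n)(V\mathbf x_n)|\gtrsim\delta$. Schlumprecht's criterion in $c_{0,\mathcal I}$ gives the result.

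\emph{Case 3: the singular parts carry the mass, $|\varphi_n^s(\mathbf x_n)|\geq\delta/2$.} This is the step I expect to be the main obstacle, because there is no coordinatewise localization. My plan here is to replace $\mathbf x_n$ by a scalarization. Since $\varphi^s_n$ only sees $\mathbf x_n$ modulo finite sets, I would use the density of $\operatorname{span}\{\chi_A\cdot\mathbf y:A\in\mathcal I\}$ (Remark~\ref{denseness}) to approximate $\mathbf x_n$ by finitely many blocks $\chi_{A}\cdot\mathbf y$. Inductively choosing the functionals $x_k^*$ coordinate by coordinate (gliding hump over $n$, disjointifying the relevant members of $\mathcal I$ modulo finite sets) should again produce bounded maps $V,W$ of the above form with $|\varphi_n^s(W V\mathbf x_n)|$ bounded below. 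The delicate point is to make the choice of $x_k^*$ consistent for all $n$ simultaneously. If this cannot be arranged, I would fall back on a Leung--R\"abiger type argument: embed $c_{0,\mathcal I}(X)$ into $\ell_\infty(X)$ and analyse there the finitely additive vector measures $m_{\varphi_n}$.
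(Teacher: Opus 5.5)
\textbf{Verdict: genuine gap in Case 3.} Your right-to-left direction, the Grothendieck consequence, and Cases 1 and 2 are essentially sound. In Case 2 you must test $W^*\varphi_n$ on the truncations $\chi_{F_n}\cdot V\mathbf x_n$, which are finitely and disjointly supported. For the untruncated $V\mathbf x_n$, your bound $|(W^*\varphi_n)(V\mathbf x_n)|\gtrsim\delta$ ignores $\varphi_n^s(WV\mathbf x_n)$ and the atomic terms off $F_n$.

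\textbf{Why Case 3 fails as planned.} The problem is not bookkeeping: diagonal scalarization cannot work there in general. Suppose the singular mass sits on one fixed infinite $A\in\mathcal I$, i.e.\ $|\varphi_n(\chi_A\cdot\mathbf x_n)|\geq c>0$. Diagonal maps $V,W$ commute with multiplication by $\chi_A$, and the elements of $c_{0,\mathcal I}$ supported on $A$ form a copy of $\ell_\infty(A)$. So any bound $|(W^*\varphi_n)(\chi_A\cdot V\mathbf x_n)|\geq c'>0$ would give Schlumprecht data in $\ell_\infty(A)$, hence a complemented $c_0$ in $\ell_\infty(A)$, which does not exist. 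Thus no consistent choice of $x_k^*,e_k$ exists in this subcase; the subcase must be \emph{refuted}. That is exactly where Leung--R\"abiger enters. The space $Y_A=\{\mathbf y:\supp\mathbf y\subseteq A\}\cong\ell_\infty(A,X)$ is complemented in $c_{0,\mathcal I}(X)$, and $(\chi_A\cdot\mathbf x_n)$, $(\varphi_n|_{Y_A})$ satisfy Schlumprecht's criterion there, so $c_0\xhookrightarrow{c}X$. Your fallback of embedding into $\ell_\infty(X)$ does not achieve this. Hahn--Banach extensions of the $\varphi_n$ need not be weak$^*$-null, and a complemented $c_0$ in $c_{0,\mathcal I}(X)$ need not be complemented in $\ell_\infty(X)$ (take $\mathcal I=\fin$).

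\textbf{The missing mechanism.} You need something that, in all cases at once, either produces disjoint sets carrying mass (giving $c_0\xhookrightarrow{c}c_{0,\mathcal I}$) or localizes the mass on a single $A\in\mathcal I$. The paper gets this by pairing each functional with its own vector. Set $\lambda_n(B)=\varphi_n(\chi_B\cdot\mathbf x_n)$ for $B\subseteq\omega$. This is bounded in $\ba(\omega)=\ell_\infty^*$, with $|\lambda_n|(B)\leq\|\varphi_n\|\,\|\chi_B\cdot\mathbf x_n\|$, and needs no coordinatewise choices.
\begin{itemize}
\item If $(\lambda_n)$ is not relatively weakly compact, the Dieudonn\'e--Grothendieck criterion gives disjoint $B_j$ with $|\lambda_{n_j}|(B_j)>\varepsilon$. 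These $B_j$ are arbitrary, not necessarily finite, so escaping singular mass is covered. A disjointification like your Case 2 then yields a complemented $c_0$ in $c_{0,\mathcal I}$.
\item Otherwise pass to a weak limit $\lambda$, so $|\lambda(\beta\omega)|\geq\delta$. Suppose $\lambda$ vanished on $\mathcal I$. Then $|\lambda|$ vanishes on $U_{\mathcal I}$ by inner regularity, so $\lambda$ is carried by $K_{\mathcal I}$. But each $\mathbf x_n$ has norm below $\eta$ on some $C_n\in\mathcal I^*$, and $K_{\mathcal I}\subseteq\widehat{C_n}$, so $|\lambda_n(K_{\mathcal I})|\leq\|\varphi_n\|\,\eta$. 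For small $\eta$ this contradicts $\lambda_n(K_{\mathcal I})\to\lambda(K_{\mathcal I})$.
\item Hence $\lambda(A)\neq0$ for some $A\in\mathcal I$, and the $Y_A$ argument above concludes.
\end{itemize}
Your Cases 1 and 2 are the instances of this scheme where $A$, respectively the $B_j$, are finite. The atomic/singular split is not the right axis; the right one is escaping versus converging, together with the fact that the limit cannot live on $K_{\mathcal I}$.
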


\begin{proof}

By the discussion above it only remains to prove one direction.

$(\Rightarrow)$  Put $Y=c_{0,\mathcal I}(X)$ and suppose that $\I$ is Grothendieck and $c_0\xhookrightarrow{c} Y$.  There are bounded
operators $J:c_0\longrightarrow Y$ and $ R:Y\longrightarrow c_0$ such that
$ RJ=I_{c_0}. $ If $(e_n)$ is the canonical basis of $c_0$, define $z_n=Je_n\in Y$ and $f_n=e_n^*R\in Y^*$ for each $n\in\mathbb N$. Then, $\sup_n\|z_n\|<\infty$, $\sup_n\|f_n\|<\infty$ and $ f_n(z_m)=\delta_{nm}$ for each $m,n\in\mathbb N$.  Moreover, for each $y\in Y$, $f_n(y)=(Ry)_n\longrightarrow0$. Thus, $f_n\xrightarrow{w^*}0$ in $Y^*.$ For $A\subseteq\N$ and $n\in\mathbb N$, set $ \lambda_n(A)=f_n(z_n\chi_A).$ Since  $\{z\chi_A:z\in Y\}\subseteq Y$, this is
well defined for every subset $A$ of $\N$.  Observe that each $\lambda_n$ is a bounded finitely additive scalar measure on $\mathcal P(\N)$.

\begin{claim}\label{lem:variation}
For every $n\in\mathbb N$ and every $A\subseteq\N$, $|\lambda_n|(A)
 \leq \|f_n\|\,\|z_n\chi_A\|_\infty$. Thus, $(\lambda_n)$ is bounded in $\ba(\N)=\ell_\infty^*$.

Indeed, let $A=A_1\mathbin{\dot\cup}\cdots\mathbin{\dot\cup}A_m$ be a finite
partition.  Choose signs, or complex phases, $\theta_j$ with
$|\theta_j|=1$ so that
$\theta_j\lambda_n(A_j)=|\lambda_n(A_j)|$.  Then
\[
 \sum_{j=1}^m|\lambda_n(A_j)|
 =f_n\!\left(\sum_{j=1}^m\theta_jz_n\chi_{A_j}\right)
 \leq \|f_n\|\,\|z_n\chi_A\|_\infty.
\]
Taking the supremum over finite partitions proves the claim.
\end{claim}

\begin{claim}
    The sequence $(\lambda_n)$ is relatively weakly compact in $\ba(\N)$.

   If not, by the Dieudonn\'e--Grothendieck criterion for bounded subsets of $\ba(\N)$ \cite[Theorem 5.3.2]{AK}, after passage to subsequences, there are a sequence of pairwise
disjoint subsets $(A_j)$ of $\N$ and $\varepsilon>0$ such that $ |\lambda_{n_j}|(A_j)>\varepsilon$ for all $j\in\mathbb N$. By the definition of total variation, for each $j\in\N$ choose scalar simple functions such that 
${\rm supp}(u_j)\subseteq A_j$, $\|u_j\|_\infty\leq1$ and $|f_{n_j}(y_j)|>\varepsilon.$ For all $j,k\in\mathbb N$, set $y_j=z_{n_j}u_j$ and $h_j(k)=\|y_j(k)\|$. Then, $h_j\in c_{0,\mathcal I}$, ${\rm supp}(h_j)\subseteq A_j$ and
\[
 0<\frac{\varepsilon}{\sup_n\|f_n\|}
 \leq \|h_j\|_\infty
 \leq \sup_n\|z_n\|,
\]
for all $j\in\mathbb N$. Consequently, $(h_j)$ is equivalent to the canonical basis of $c_0$.

Put $v(k)=\dfrac{y_j(k)}{\|y_j(k)\|}$ whenever $k\in\supp(y_j)$ for some $j\in\mathbb N$, and $v(k)=0$ otherwise. Notice that the operator 
\[V:c_{0,\mathcal I}\longrightarrow Y,\quad (Va)(k)=a(k)v(k), \ a\in c_{0,\I},\ k\in\mathbb N,\]
is well defined and continuous, since $\|(Va)(k)\|\leq|a(k)|$ for all $a\in c_{0,\I}$ and $k\in\mathbb N$, and it satisfies
$Vh_j=y_j$ for every $j\in\mathbb N$.  Let $\varphi_j=V^*f_{n_j}\in c_{0,\mathcal I}^*$ for $j\in\mathbb N$. Since $(f_{n_j})$ is weak$^*$-null, $(\varphi_j)$ is weak$^*$-null.  Moreover, $|\varphi_j(h_j)|=|f_{n_j}(y_j)|>\varepsilon$ for each $j\in\mathbb N$. By Theorem~\ref{schlumprecht theorem}, $c_0\xhookrightarrow{c}c_{0,\mathcal I}$, contrary to the hypothesis.  Therefore
$(\lambda_n)$ is relatively weakly compact.
\end{claim}

Pass to a subsequence with $\lambda_n\to\lambda$ weakly in $\ba(\N)$. Notice that $\lambda(\N)=1$.  

\begin{claim}
    $\lambda(A)\neq0$ for some $A\in\mathcal I$.

Suppose that $\lambda(A)=0$ for every $A\in\mathcal I$. Observe that $|\lambda|(A)=0$ for each $A\in\mathcal I$. We also denote by $\lambda$ the Radon measure on $\beta\N$ representing $\lambda$. Notice that $|\lambda|(\widehat{B})=|\lambda|(B)$ for all $B\subseteq\N$. Let $F\subset U_\mathcal I$ be compact. Since $U_\mathcal I=\bigcup_{A\in\mathcal I}\widehat A$, there are $A_1,\ldots,A_n\in\mathcal I$ such that
$F\subseteq\bigcup_{1\leq j\leq n}\widehat{A_j}$. Consequently, $|\lambda|(F)\leq\sum_{j=1}^n|\lambda|(A_j)=0.$ The arbitrariness of $F$ implies that $|\lambda|(U_\mathcal I)=0$. Hence, $\lambda$ is supported on $K_\mathcal I=\beta\N\setminus U_\mathcal I$ and $ \lambda(K_\mathcal I)=1.$

Choose $\eta>0$ with $M\eta<1/2$, where
$M=\sup_n\|f_n\|$, and set for each $n\in\N:$
\[
 D_n\doteq\{k\in\N:\|z_n(k)\|\geq\eta\}\in\mathcal I,
 \qquad \text{and}\qquad
 C_n\doteq\N\setminus D_n\in\mathcal I^*.
\]
Since $K_\mathcal I\subseteq\widehat{C_n}$,  by Claim~\ref{lem:variation}, we have:
\[|\lambda_n(K_\mathcal I)|\leq |\lambda_n|(\widehat{C_n})=|\lambda_n|(C_n)<1/2,\text{ for all }n\in\N.\]

On the other hand,  the equality $\lambda(K_\mathcal I)=1$ implies that $\lambda_n(K_\mathcal I)\to1$, a contradiction.
\end{claim}

To conclude, fix $A\in\mathcal I$ with $\lambda(A)\neq0$.  After deleting finitely many terms, there is $\delta>0$ such that
\begin{equation}\label{eq:positive-on-A}
    |f_n(z_n\chi_A)|\geq\delta,\quad\text{for all }n\in\mathbb N.
\end{equation}
  Let $Y_A=\{y\in Y:\supp y\subseteq A\}.$ Because $A\in\mathcal I$, every bounded $X$-valued sequence supported on $A$ is
$\mathcal I$-null.  Hence, $ Y_A\cong\ell_\infty(A,X)$ via the map $y=(y_n)\mapsto (y_n)_{n\in A}$. The operator $ S:c_0\longrightarrow Y_A$ defined by $ Se_n=z_n\chi_A$ for each $n\in\mathbb N$, is bounded.  Equation \eqref{eq:positive-on-A} gives $\inf_n\|Se_n\|\geq \dfrac{\delta}{\sup_n\|f_n\|}>0.$ The sequence $(Se_n)$ is weakly null and has an upper $c_0$ estimate. The consequence of the Bessaga--Pe\l czy\'nski principle explained provides a
subsequence $(Se_{n_k})$ equivalent to the canonical basis of $c_0$. Let $g_k=f_{n_k}|_{Y_A}$.  Then $(g_k)$ is weak-star null in $Y_A^*$ and $|g_k(Se_{n_k})|\geq\delta$ for all $k\in\N$. By Theorem \ref{schlumprecht theorem},
$Y_A$ contains a complemented copy of $c_0$. The set $A$ is nonempty, since $\lambda(A)\neq0$. Since $Y_A$ is the $\ell_\infty$-sum of the countable copies of $X$, it follows from the main theorem of \cite{leung-rabiger} that $c_0\xhookrightarrow{c} X$.
\end{proof}

\section{On \texorpdfstring{$\mathcal I$}{I}-AD families and the quotient \texorpdfstring{$\ell_\infty/c_{0,\mathcal I}$}{linfty/c0I}}\label{section quotiens}

In \cite{rincon-uzcateguiII}, the authors introduced a cardinal invariant associated with an $\I$-AD family in order to study the complementation of $\I$. Recall that a $\mathcal{A}\subseteq \ideal^+$ is  $\ideal$-AD if $X\cap Y\in \ideal$ for all distinct  elements $X,Y\in \mathcal A$. According to \cite{rincon-uzcateguiII}, let
\begin{gather*}
\mathfrak{ad}_\ideal(\mathcal A)=\sup\{|\mathcal B|\,\colon\,\mbox{$\mathcal B\subseteq \mathcal A$ is an $\ideal$-AD family}\}.
\end{gather*}
We also write $\mathfrak{ad}(\I)\doteq\mathfrak{ad}(\I^+)$.

As shown by Kania in \cite{kania}, the complementation of $c_{0,\mathcal I}$ in $\ell_\infty$ is related to the existence of uncountable $\I$-AD families. More precisely, Kania proved that if $\mathfrak{ad}(\I)$ is uncountable, then $\ell_\infty/c_{0,\mathcal I}$ contains an isometric copy of $c_0(\mathcal A)$ for some uncountable $\mathcal A$ (see the definition of $c_0(\mathcal A)$ below). Our goal in this section is to refine Kania's result. To do it, we start by studying the connection between $\mathcal I$-AD families and the quotient $\ell_\infty/c_{0,\mathcal I}$. We will analyze the isometrical structure of the Banach space $\ell_\infty(X)/c_{0,\mathcal I}(X)$. Finally, as a consequence of our results, we obtain the following beautiful formula:  
\[\mathfrak{ad}(\mathcal I\times\mathcal J)=\mathfrak{ad}(\mathcal I)\mathfrak{ad}(\mathcal J).\]

\subsection{Some structural results about \texorpdfstring{$\ell_\infty/c_{0,\mathcal I}$}{linfty/c0I}}

We start by recalling some notation. Let $\mathcal I$ be an ideal on a countably infinite set $F$. For each sequence ${\bf x}=(x_n)_{n\in F}$  in $\mathbb R$, we set $B_{\bf x}(\mathcal I)=\{b\in\mathbb R\,\colon\,\{k\in F\,\colon\,x_k>b\}\not\in\mathcal I\}$ and 
\begin{gather*}
 \mathcal I-\limsup{\bf x}=
 \begin{cases}
\sup B_{\bf x}(\mathcal I),& B_{\bf x}(\mathcal I)\neq\emptyset;\\
-\infty, & B_{\bf x}(\mathcal I)=\emptyset.
 \end{cases}
 \end{gather*}
 Notice that if ${\bf x}\in\ell_\infty(F)$ and $\mathcal I$ is proper, we have $B_{\bf x}(\mathcal I)\neq\emptyset$ and $-\infty<\displaystyle\mathcal I-\limsup{\bf x}=\sup B_{\bf x}(\mathcal I)<\infty$. If ${\bf x}=(x_{(m,n)})\in\ell_\infty(\mathbb N\times\mathbb N)(X)$, we set ${\bf x}_{(m)}=(x_{(m,n)})_{n\in\mathbb N}$.

 \begin{remark}\label{norm in the quotient}
     If $\mathcal I$ is an ideal on a countably infinite set $F$ and ${\bf x}=(x_n)_{n\in F}$ is a bounded sequence in a Banach space $X$, then the proof of \cite[Lemma 5.7]{rincon-uzcategui} shows that
     \begin{equation*}
         \|{\bf x}+c_{0,\mathcal I}(X)\|=\mathcal I-\limsup_n\|x_n\|.
     \end{equation*}
 \end{remark}

\begin{lemma}
\label{superior limit of sums}
Let $(\mathcal I_n)_{n\in\mathbb N}$ be a sequence of proper ideals on $\mathbb N$ and ${\bf x}=(x_{(m,n)})\in\ell_\infty(\N\times\N)$. Then,
    \begin{gather*}
       \left( \bigoplus_{n\in\mathbb N}\mathcal I_n\right)-\limsup_{(m,n)} x_{(m,n)}=\sup_{m\in\mathbb N}(\mathcal I_m-\limsup_n x_{(m,n)}).
    \end{gather*}
\end{lemma}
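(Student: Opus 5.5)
I plan to prove the identity by two inequalities, working directly from the definition of $\mathcal I$-$\limsup$ through the sets $B_{\bf x}(\mathcal I)$. Write $\mathcal J=\bigoplus_{n}\mathcal I_n$, viewed on $\N\times\N$ with $K_m=\{m\}\times\N$ and $\mathcal I_m$ transported to $K_m$. Put $L=\mathcal J\text{-}\limsup{\bf x}$ and $L_m=\mathcal I_m\text{-}\limsup_n x_{(m,n)}$. Each $\mathcal I_m$ is proper, and hence so is $\mathcal J$. Since ${\bf x}$ is bounded, every $B$-set involved is nonempty and bounded above. Therefore $L$ and all $L_m$ are finite, and $\sup_m L_m\le\|{\bf x}\|_\infty$ is finite.

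The key reformulation is the following. For $b\in\R$, set $S_b=\{(m,n):x_{(m,n)}>b\}$. By the definition of the direct sum, $S_b\notin\mathcal J$ if and only if there exists $m$ with $(S_b)_{(m)}=\{n:x_{(m,n)}>b\}\notin\mathcal I_m$. This gives
\[
B_{\bf x}(\mathcal J)=\bigcup_{m\in\N}B_{{\bf x}_{(m)}}(\mathcal I_m).
\]
Taking suprema of both sides then yields $L=\sup_m\sup B_{{\bf x}_{(m)}}(\mathcal I_m)=\sup_m L_m$. Nonemptiness of every set involved ensures that no $-\infty$ conventions interfere.

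So the whole proof reduces to the set identity above, which is a direct unwinding of the definitions. The only point needing care is identifying the direct sum on $\N\times\N$: $A\in\bigoplus\mathcal I_n$ if and only if $A_{(m)}\in\mathcal I_m$ for all $m$. I will state this explicitly at the start. I do not expect any real obstacle beyond bookkeeping.
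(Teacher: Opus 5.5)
Your proof is correct and follows essentially the same route as the paper. Both arguments rest on one observation: $\{(m,n):x_{(m,n)}>b\}\notin\bigoplus_n\mathcal I_n$ exactly when some section $\{n:x_{(m,n)}>b\}$ lies outside $\mathcal I_m$. The paper unpacks this as two separate inequalities. Your packaging as $B_{\bf x}(\mathcal J)=\bigcup_m B_{{\bf x}_{(m)}}(\mathcal I_m)$ is slightly cleaner. It also avoids the paper's overstated claim that $\mathcal I_m\text{-}\limsup_n x_{(m,n)}>A$ is equivalent to $\{n:x_{(m,n)}>A\}\notin\mathcal I_m$, which in fact holds only in the forward direction.
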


\begin{proof}
Let $\mathcal I\coloneqq\bigoplus_{n\in\mathbb N}\mathcal I_n$ and $A\in\mathbb R$ be such that $\sup_{m\in\mathbb N}\,(\mathcal I_m-\limsup_{n} x_{(m,n)})> A$. We have
  \begin{align*}
      \sup_{m\in\mathbb N}\,(\mathcal I_m-\limsup_{n} x_{(m,n)})> A&\Longleftrightarrow(\exists\,m\in\mathbb N)(\mathcal I_m-\limsup_n x_{(m,n)}> A)\\
&\Longleftrightarrow(\exists\,m\in\mathbb N)(\{n\in\mathbb N\,\colon\,x_{(m,n)}>A\}\not\in\mathcal I_m)\\
&\Longleftrightarrow\{(k,n)\in\mathbb N\times\mathbb N\,\colon\,x_{(k,n)}>A\}\not\in\mathcal I.
  \end{align*}
  Thus, $\mathcal I-\limsup_{(m,n)}x_{(m,n)}\geq A$. Since $A$ was arbitrary, we conclude that 
  \begin{gather}\label{first}
      \mathcal I-\limsup_{(m,n)}x_{(m,n)}\geq\sup_{m\in\mathbb N}\,(\mathcal I_m-\limsup_{n} x_{(m,n)}).
  \end{gather}
Now fix $\varepsilon>0$ such that $\mathcal I-\limsup_{(m,n)}x_{(m,n)}-\varepsilon<b$ where $b\in\mathbb R$ satisfies  $\{(k,n)\in\mathbb N\times\mathbb N\,\colon\,x_{(k,n)}>b\}\not\in\mathcal I$. So, there is $m\in\mathbb N$ such that $\{n\in\mathbb N\,\colon\,x_{(m,n)}>b\}\not\in\mathcal I_m$. Then, $\mathcal I_m-\limsup_n x_{(m,n)}\geq b$.  It follows that 
   \begin{gather}\label{second}
      \mathcal I-\limsup_{(m,n)}x_{(m,n)}\leq\sup_{m\in\mathbb N}\,(\mathcal I_m-\limsup_{n} x_{(m,n)}).
  \end{gather}
From \eqref{first} and \eqref{second} we obtain the result.
\end{proof}

\begin{theorem}
\label{quotient of a sum}
Let $X$ be a Banach lattice, $(\mathcal I_m)_{m\in\mathbb N}$ be a sequence of proper ideals on $\mathbb N$, and $\mathcal I\doteq\bigoplus_{m\in\mathbb N}\mathcal I_m$. Then,
\begin{gather*}
   \ell_\infty(X)/c_{0,\mathcal I}(X)\equiv_{\leq}\ell_\infty((\ell_\infty(X)/c_{0,\mathcal I_m}(X))_{m\in\mathbb N}).
\end{gather*}
\end{theorem}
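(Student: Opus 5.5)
The natural map is the one that sends a sequence indexed by $\mathbb N\times\mathbb N$ to the sequence of its rows. Identify $\ell_\infty(X)$ over the index set $\mathbb N\times\mathbb N$ (where $\mathcal I=\bigoplus_m\mathcal I_m$ lives) with $\ell_\infty(\ell_\infty(X))$ via ${\bf x}\mapsto({\bf x}_{(m)})_{m\in\mathbb N}$; this is a Banach lattice isometry. Compose it with the product of the quotient maps $q_m\colon\ell_\infty(X)\to\ell_\infty(X)/c_{0,\mathcal I_m}(X)$ to obtain
\[
\Phi\colon\ell_\infty(X)\to\ell_\infty\big((\ell_\infty(X)/c_{0,\mathcal I_m}(X))_{m\in\mathbb N}\big),\qquad \Phi({\bf x})=\big({\bf x}_{(m)}+c_{0,\mathcal I_m}(X)\big)_{m}.
\]
$\Phi$ is well defined because $\|q_m({\bf x}_{(m)})\|\le\|{\bf x}\|$. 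It is linear, and it is a lattice homomorphism because each $q_m$ is one (each $c_{0,\mathcal I_m}(X)$ is a lattice ideal in $\ell_\infty(X)$) and the lattice operations in the $\ell_\infty$-sum are coordinatewise.

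The first step is to show that $\ker\Phi=c_{0,\mathcal I}(X)$. A sequence ${\bf x}$ lies in the kernel iff each row ${\bf x}_{(m)}$ is $\mathcal I_m$-null. By the definition of the direct sum, $\{(m,n)\colon\|x_{(m,n)}\|\ge\varepsilon\}\in\mathcal I$ iff each of its sections lies in $\mathcal I_m$, which is exactly the rowwise $\mathcal I_m$-nullity. So $\Phi$ factors through an injective lattice homomorphism $\widehat\Phi$ on the quotient.

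Next, $\widehat\Phi$ is isometric. By Remark~\ref{norm in the quotient},
\[
\|{\bf x}+c_{0,\mathcal I}(X)\|=\mathcal I\text{-}\limsup_{(m,n)}\|x_{(m,n)}\| .
\]
Lemma~\ref{superior limit of sums}, applied to the real bounded sequence $(\|x_{(m,n)}\|)$, rewrites this as
\[
\sup_m\,\mathcal I_m\text{-}\limsup_n\|x_{(m,n)}\| ,
\]
and applying Remark~\ref{norm in the quotient} to each $\mathcal I_m$ turns this into $\sup_m\|{\bf x}_{(m)}+c_{0,\mathcal I_m}(X)\|=\|\Phi({\bf x})\|$. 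Properness of each $\mathcal I_m$ is exactly what Lemma~\ref{superior limit of sums} needs.

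Finally, surjectivity. Given $(\xi_m)$ in the $\ell_\infty$-sum with $\sup_m\|\xi_m\|<\infty$, I will use that the quotient norm is an infimum: for each $m$ pick ${\bf y}_m\in\ell_\infty(X)$ with $q_m({\bf y}_m)=\xi_m$ and $\|{\bf y}_m\|\le\|\xi_m\|+1$. Then ${\bf x}$ with rows ${\bf y}_m$ is bounded and $\Phi({\bf x})=(\xi_m)$. An injective, surjective, isometric lattice homomorphism is a Banach lattice isometry, which gives the statement.

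I expect the main point of care to be the lattice structure in the vector-valued setting: making sure "Banach lattice" for $X$ makes $\ell_\infty(X)$ a Banach lattice under coordinatewise order, and that $c_{0,\mathcal I}(X)$ is a closed solid ideal there (solidity follows since $|u|\le|v|$ implies $\|u\|\le\|v\|$). Once that is in place the quotients are Banach lattices and the argument above is routine.
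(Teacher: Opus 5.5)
Your proof is correct and is essentially the paper's argument: the same row-wise map (your $\Phi$ is the paper's $\xi$), the same identification of the kernel via the sections of $A({\bf x},\varepsilon)$, and the same norm identity obtained from Remark~\ref{norm in the quotient} combined with Lemma~\ref{superior limit of sums}. Your surjectivity step goes slightly further than the paper: choosing lifts with $\|{\bf y}_m\|\le\|\xi_m\|+1$ supplies the uniform bound that the paper's brief justification (``a composition of onto linear operators'') leaves implicit.
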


\begin{proof}
Consider the following map:
\begin{align*}
\xi\colon\ell_\infty(X)&\longrightarrow\ell_\infty((\ell_\infty(X)/c_{0,\mathcal I_m}(X))_{m\in\mathbb N}).\\
    {\bf x}=(x_{(m,n)})&\longmapsto({\bf x}_{(m)}+c_{0,\mathcal I_m}(X))_{m\in\mathbb N}.
\end{align*}
Notice that $\xi$ is linear, continuous, preserves the lattice operations, and $\xi$ is onto because it is a composition of onto linear operators. Moreover: 
\begin{align*}
\xi({\bf x})={\bf0}&\Longleftrightarrow(\forall\,m\in\mathbb N)({\bf x}_{(m)}\in c_{0,\mathcal I_m})\\
    &\Longleftrightarrow(\forall\,m\in\mathbb N)(\forall\,\varepsilon>0)(A({\bf x}_{(m)},\varepsilon)\in \mathcal I_m)\\
    &\Longleftrightarrow(\forall\,\varepsilon>0)(\forall\,m\in\mathbb N)(A({\bf x},\varepsilon)_{(m)}\in \mathcal I_m)\\
    &\Longleftrightarrow(\forall\,\varepsilon>0)(A({\bf x},\varepsilon)\in \mathcal I).
\end{align*}
Therefore, $\ker\xi=c_{0,\mathcal I}(X)$. Finally, by Remark \ref{norm in the quotient} and Lemma \ref{superior limit of sums} we have
\begin{align*}
    \|{\bf x}+\ker\xi\|&=\mathcal I-\limsup_{(m,n)}\|x_{(m,n)}\|\\
    &=\sup_{m\in\mathbb N}(\mathcal I_m-\limsup_{n}\|x_{(m,n)}\|)\\
    &=\sup_{m\in\mathbb N}\|{\bf x}_{(m)}+c_{0,\mathcal I_m}(X)\|\\
    &=\|\xi({\bf x})\|.
\end{align*}
The above equation says that the quotient map induced by $\xi$ is a Banach lattice isometry from $\ell_\infty(X)/c_{0,\mathcal I}(X)$ onto $\ell_\infty((\ell_\infty(X)/c_{0,\mathcal I_m}(X))_{m\in\mathbb N}).$
\end{proof}

\begin{lemma}
\label{superior limit of fubini products}
Let $\mathcal I$ be an ideal on $\mathbb N$ and $(\mathcal I_n)_{n\in\mathbb N}$ be a sequence of proper ideals on $\mathbb N$. If ${\bf x}=(x_{(m,n)})\in\ell_\infty(\N\times\N)$, then
\begin{gather*}
\left(\sum_{\mathcal I}\mathcal I_n\right)-\limsup_{(m,n)} x_{(m,n)}=\mathcal I-\limsup_m\,\bigg(\mathcal I_m-\limsup_{n} x_{(m,n)}\bigg).
\end{gather*}

\end{lemma}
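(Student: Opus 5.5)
The plan is to mirror the proof of Lemma~\ref{superior limit of sums}: compare the two $\limsup$'s through their defining sets $B_{\bf x}(\cdot)$. Write $\mathcal K\doteq\sum_{\mathcal I}\mathcal I_n$, and for each $m$ put $s_m\doteq\mathcal I_m-\limsup_n x_{(m,n)}$. Since each $\mathcal I_m$ is proper and ${\bf x}$ is bounded, $(s_m)$ is a bounded real sequence, so the right-hand side is a well-defined real number whenever $\mathcal I$ is proper. (If $\mathcal I=\mathcal P(\N)$, then $\mathcal K=\mathcal P(\N\times\N)$ and both sides equal $-\infty$.)

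The key fact is a comparison between $A_b\doteq\{(m,n)\colon x_{(m,n)}>b\}$ and $S_b\doteq\{m\colon s_m>b\}$. By definition, $A_b\notin\mathcal K$ if and only if
\[
T_b\doteq\{m\colon\{n\colon x_{(m,n)}>b\}\notin\mathcal I_m\}\notin\mathcal I .
\]
From the definition of $\mathcal I_m-\limsup$ we get two inclusions. If $s_m>b$, then there is $b'>b$ in $B_{{\bf x}_{(m)}}(\mathcal I_m)$, and $\{n\colon x_{(m,n)}>b'\}\subseteq\{n\colon x_{(m,n)}>b\}$, so $m\in T_b$. Conversely, if $m\in T_b$, then $b\in B_{{\bf x}_{(m)}}(\mathcal I_m)$, so $s_m\geq b$. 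Hence
\[
S_b\subseteq T_b\subseteq\{m\colon s_m\geq b\}.
\]

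For ``$\geq$'', I take any $b$ with $S_b\notin\mathcal I$. Then $T_b\notin\mathcal I$, so $A_b\notin\mathcal K$, and therefore $\mathcal K-\limsup{\bf x}\geq b$. Taking the supremum over such $b$ gives $\mathcal K-\limsup{\bf x}\geq \mathcal I-\limsup_m s_m$.

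For ``$\leq$'', I take $b$ with $A_b\notin\mathcal K$, so $T_b\notin\mathcal I$. Then for any $b''<b$, $\{m\colon s_m>b''\}\supseteq\{m\colon s_m\geq b\}\supseteq T_b\notin\mathcal I$, so $b''\in B_{(s_m)}(\mathcal I)$. Letting $b''\uparrow b$ gives $\mathcal I-\limsup_m s_m\geq b$, and the supremum over such $b$ finishes the proof.

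The main obstacle is the strict versus non-strict inequality mismatch at the boundary value $b$. I handle it by the $\varepsilon$-shift $b''<b$, exactly as in Lemma~\ref{superior limit of sums}. I also need to check the degenerate case where some $B$-set is empty. Since the $\mathcal I_m$ are proper and ${\bf x}$ is bounded, any $b<\inf{\bf x}$ gives $A_b=\N\times\N$, which lies outside $\mathcal K$ when $\mathcal I$ is proper, so that case only arises when $\mathcal I=\mathcal P(\N)$, already treated above.
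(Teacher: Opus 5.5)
Your proposal is correct and follows essentially the same route as the paper. Your inclusions $S_b\subseteq T_b\subseteq\{m\colon s_m\geq b\}$ are exactly the two parts of the paper's Claim, where $T_b$ is the set of $m$ with $A(b)_{(m)}\notin\mathcal I_m$, and both inequalities are then obtained by the same supremum arguments over the sets $B_{\bf x}(\cdot)$. Your version is slightly tidier: it makes explicit the step from $\{m\colon s_m\geq b\}\notin\mathcal I$ to $b\leq\mathcal I-\limsup_m s_m$, avoids the $\varepsilon/2$-shift in the ``$\geq$'' direction, and handles the degenerate case $\mathcal I=\mathcal P(\mathbb N)$.
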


\begin{proof}
Put $\mathcal J\doteq\sum_{\mathcal I}\mathcal I_n$ and for $b\in\mathbb R$, $A(b)\doteq\{(m,n)\in\mathbb N\times\mathbb N\,\colon\,x_{(m,n)}>b\}$. 

\begin{claim}\label{identity for frolik sums}
\begin{enumerate}
    \item if $m\in\N$ satisfies $A(b)_{(m)}\not\in\I_m$, then $\I_m-\limsup_nx_{(m,n)}\geq b.$
    \item if $\I_m-\limsup_nx_{(m,n)}> b$, then $A(b)_{(m)}\not\in\I_m$. 
\end{enumerate}

\medskip

If $k\in\N$ satisfies $A(b)_{(k)}=\{n\in\N:x_{(k,n)}>b\}\not\in\I_k$, it follows from the definition of limsup that $\I_k-\limsup_n x_{(k,n)}\geq b$. Now, if 
  $\I_m-\limsup_nx_{(m,n)}> b$, there is $c\in\R$ with $c\geq b$ and $A(c)_{(m)}=\{n\in\N:x_{(m,n)}>c\}\not\in\I_m$. Thus, $A(b)_{(m)}\not\in\I_m$ since
  $A(c)_{(m)}\subseteq A(b)_{(m)}$.
\end{claim}

 Let $\varepsilon>0$ be given. There is $b\in\mathbb R$ such that $\mathcal J-\limsup_{(m,n)} x_{(m,n)}-\varepsilon<b$ and $A(b)\not\in\mathcal J$.
Thus, $\{m\in\N:A(b)_{(m)}\not\in\I_m\}\not\in\mathcal I$, which turns out into $\{m\in\mathbb N\,\colon\,\mathcal I_m-\limsup_n x_{(m,n)}\geq b\}\not\in\mathcal I$ by 
(1) of Claim \ref{identity for frolik sums}. Hence, $ b\leq \mathcal I-\limsup_m\,(\mathcal I_m-\limsup_{n} x_{(m,n)}).$ The arbitrariness of $\varepsilon>0$ yields that 
  \begin{gather*}
      \left(\sum_{\mathcal I}\mathcal I_n\right)-\limsup_{(m,n)} x_{(m,n)}\leq\mathcal I-\limsup_m\,\bigg(\mathcal I_m-\limsup_{n} x_{(m,n)}\bigg).
  \end{gather*}

  Conversely, let $\varepsilon>0$ and $b\in\mathbb R$ be such that
  $\mathcal I-\limsup_m\,(\mathcal I_m-\limsup_{n} x_{(m,n)})-\varepsilon<b-\varepsilon/2$ and $ \{m\in\mathbb N\,\colon\,\mathcal I_m-\limsup_n x_{(m,n)}\geq b\}\not\in\mathcal I$. Notice that $ \{m\in\mathbb N\,\colon\,\mathcal I_m-\limsup_n x_{(m,n)}> b-\varepsilon/2\}\not\in\mathcal I$.
   It follows from (2) of Claim \ref{identity for frolik sums} that $A(b-\varepsilon/2)\not\in\mathcal J$, that is, $b-\varepsilon/2\leq \mathcal J-\limsup_{(m,n)} x_{(m,n)}$. Therefore,
  \begin{gather*}
      \mathcal I-\limsup_m\,\bigg(\mathcal I_m-\limsup_{n} x_{(m,n)}\bigg)\leq \left(\sum_{\mathcal I}\mathcal I_n\right)-\limsup_{(m,n)} x_{(m,n)}.\qedhere
  \end{gather*}
\end{proof}

In the following result, we analyze the quotient of $\ell_\infty$ with $c_{0,\sum_\mathcal I\mathcal I_n}$.

\begin{theorem}\label{isomorphism quotient of a fubini-product}
Let $\mathcal I$ be an ideal and $(\I_n)_{n\in\mathbb N}$ be a sequence of proper ideals on $\mathbb N$, and let $X$ be a Banach lattice. Then 
\begin{gather*}
   \ell_\infty(X)/c_{0,\sum_{\mathcal I}\mathcal I_n}(X)\equiv_{\leq}\ell_\infty((\ell_\infty(X)/c_{0,\I_n}(X))_n)\Big{/}c_{0,\mathcal I}((\ell_\infty(X)/c_{0,\mathcal I_n}(X))_n).
\end{gather*}
In particular, if $\I_n=\mathcal J$ for all $n\in\mathbb N$, then 
\begin{gather*}
   \ell_\infty(X)/c_{0,\mathcal I\times\mathcal J}(X)\equiv_{\leq}\ell_\infty(\ell_\infty(X)/c_{0,\mathcal J}(X))\Big{/}c_{0,\mathcal I}(\ell_\infty(X)/c_{0,\mathcal J}(X)).
\end{gather*}
\end{theorem}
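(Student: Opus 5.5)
We follow the proof of Theorem~\ref{quotient of a sum}, with Lemma~\ref{superior limit of fubini products} in place of Lemma~\ref{superior limit of sums}. Write $Z_n\doteq\ell_\infty(X)/c_{0,\mathcal I_n}(X)$, let $q_n\colon\ell_\infty(X)\to Z_n$ be the quotient maps, and set $\mathcal J\doteq\sum_{\mathcal I}\mathcal I_n$. Identify $\ell_\infty(X)$ over $\mathbb N\times\mathbb N$ with $\ell_\infty(X)$, as in Theorem~\ref{quotient of a sum}. Define
\[
\xi\colon \ell_\infty(X)\longrightarrow \ell_\infty((Z_n)_n)\big/c_{0,\mathcal I}((Z_n)_n),\qquad {\bf x}\longmapsto \big(q_m({\bf x}_{(m)})\big)_{m}+c_{0,\mathcal I}((Z_n)_n).
\]
This is $\xi=Q\circ\eta$, where $\eta({\bf x})=(q_m({\bf x}_{(m)}))_m$ is the map of Theorem~\ref{quotient of a sum} and $Q$ is the quotient map onto $\ell_\infty((Z_n)_n)/c_{0,\mathcal I}((Z_n)_n)$.

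\emph{Routine properties.} $\xi$ is linear, of norm at most $1$, and a lattice homomorphism, since $\eta$ and $Q$ are. The second quotient is a Banach lattice because $c_{0,\mathcal I}((Z_n)_n)$ is a closed solid subspace (an ideal) of the $\ell_\infty$-sum.

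\emph{Surjectivity.} $\eta$ is onto because each coordinate class can be lifted with norm at most $\|z_m\|+1$. Hence the lifts form a bounded family and define an element of $\ell_\infty(X)$. Composing with the onto map $Q$ gives surjectivity of $\xi$.

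\emph{Norm identity (main step).} For ${\bf x}\in\ell_\infty(X)$, Remark~\ref{norm in the quotient}, applied in the vector-valued form to the Banach space $\ell_\infty((Z_n)_n)$ with the sequence $(q_m({\bf x}_{(m)}))_m$, gives
\[
\|\xi({\bf x})\|=\mathcal I-\limsup_m \|q_m({\bf x}_{(m)})\|_{Z_m}.
\]
Applying Remark~\ref{norm in the quotient} once more to each coordinate gives $\|q_m({\bf x}_{(m)})\|=\mathcal I_m-\limsup_n\|x_{(m,n)}\|$. Lemma~\ref{superior limit of fubini products}, applied to the real bounded array $(\|x_{(m,n)}\|)$, turns the resulting iterated limsup into $\mathcal J-\limsup_{(m,n)}\|x_{(m,n)}\|$. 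By Remark~\ref{norm in the quotient} this equals $\|{\bf x}+c_{0,\mathcal J}(X)\|$. So
\[
\|\xi({\bf x})\|=\|{\bf x}+c_{0,\mathcal J}(X)\|\quad\text{for all }{\bf x}.
\]
Consequently $\ker\xi=c_{0,\mathcal J}(X)$: the right side vanishes exactly on $c_{0,\mathcal J}(X)$, since that space is closed. Hence the induced map $\ell_\infty(X)/c_{0,\mathcal J}(X)\to \ell_\infty((Z_n)_n)/c_{0,\mathcal I}((Z_n)_n)$ is a surjective lattice isometry.

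Two points need checking. First, Remark~\ref{norm in the quotient} is stated for $\ell_\infty(X)$ over a countable set with a scalar ideal, so we must make sure its proof (from \cite[Lemma 5.7]{rincon-uzcategui}) works when $X$ is replaced by the varying spaces $Z_m$. It should, because that proof only uses norms of coordinates. Second, we must confirm that the lattice operations in the quotients commute with $\eta$ and $Q$. Alternatively, one can describe $\ker\xi$ directly, as in Theorem~\ref{quotient of a sum}, via ${\bf x}\in\ker\xi$ iff $\{m:\mathcal I_m-\limsup_n\|x_{(m,n)}\|\ge\varepsilon\}\in\mathcal I$ for all $\varepsilon$. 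This also needs the $\varepsilon/2$ adjustment from Claim~\ref{identity for frolik sums}.

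The Fubini case follows by taking $\mathcal I_n=\mathcal J$ for every $n$, together with Remark~\ref{isomorphisms of l_oo(c_0)}.
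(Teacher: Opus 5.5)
Your proof is correct and is essentially the paper's. You use the same map ${\bf x}\mapsto\big({\bf x}_{(m)}+c_{0,\mathcal I_m}(X)\big)_m+c_{0,\mathcal I}\big((\ell_\infty(X)/c_{0,\mathcal I_m}(X))_m\big)$, and you obtain the norm identity from Remark~\ref{norm in the quotient} (applied, as the paper also tacitly does, to the $\ell_\infty$-sum of the varying quotients) together with Lemma~\ref{superior limit of fubini products}. The only difference is that you read off the kernel from the norm identity rather than through a separate chain of equivalences; this is slightly cleaner, since it avoids the $\varepsilon/2$ adjustment hidden in the paper's passage from $\mathcal I_m\text{-}\limsup_n\|x_{(m,n)}\|\geq\varepsilon$ to $A({\bf x}_{(m)},\varepsilon)\notin\mathcal I_m$.
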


\begin{proof}
Let 
\begin{align*}
    \pi\colon\ell_\infty(X)&\to\ell_\infty((\ell_\infty(X)/c_{0,\mathcal I_m}(X))_m)\Big{/}c_{0,\mathcal I}((\ell_\infty(X)/c_{0,\mathcal I_m}(X))_m)\\
    {\bf x}=(x_{(m,n)})&\mapsto y({\bf x})+c_{0,\mathcal I}((\ell_\infty(X)/c_{0,\mathcal I_m}(X))_m),
\end{align*}
where $y({\bf x})=({\bf x}_{(m)}+c_{0,\mathcal I_m}(X))_{m\in\mathbb N}$. Clearly, $\pi$ is linear, continuous and preserves the lattice operations. Also, $\pi$ is onto since it is a composition of onto linear operators. Notice that
\begin{align*}
    \pi({\bf x})={\bf0}&\Longleftrightarrow ({\bf x}_{(m)}+c_{0,\mathcal I_m}(X))_{m\in\mathbb N}\in c_{0,\mathcal I}((\ell_\infty(X)/c_{0,\mathcal I_m}(X))_m)\\
    &\Longleftrightarrow(\forall\,\varepsilon>0)(A(y({\bf x}),\varepsilon)\in\mathcal I)\\
    &\Longleftrightarrow(\forall\,\varepsilon>0)(\{m\in\mathbb N\,\colon\,\|{\bf x}_{(m)}+c_{0,\mathcal I_m}(X)\|\geq\varepsilon\}\in\mathcal I)\\
     &\Longleftrightarrow(\forall\,\varepsilon>0)(\{m\in\mathbb N\,\colon\,\mathcal I_m-\limsup_n\|x_{(m,n)}\|\geq\varepsilon\}\in\mathcal I)\\
      &\Longleftrightarrow(\forall\,\varepsilon>0)(\{m\in\mathbb N\,\colon\,A({\bf x}_{(m)},\varepsilon)\not\in\mathcal I_m\}\in\mathcal I)\\
     &\Longleftrightarrow(\forall\,\varepsilon>0)(\{m\in\mathbb N\,\colon\,A({\bf x},\varepsilon)_{(m)}\not\in\mathcal I_m\}\in\mathcal I)\\
     &\Longleftrightarrow(\forall\,\varepsilon>0)(A(\varepsilon,{\bf x})\in\sum_{\mathcal I}\mathcal I_n).
\end{align*}
Thus, $\ker\pi=c_{0,\mathcal I\times(\mathcal J_m)_m}(X)$. Now, by Lemma \ref{superior limit of fubini products} and Remark \ref{norm in the quotient}, we have
\begin{align*}
     \|{\bf x}+\ker\pi\|=\|{\bf x}+c_{0,\sum_{\mathcal I}\mathcal I_n}(X)\|&=\left(\sum_{\mathcal I}\mathcal I_n\right)-\limsup_{(m,n)}\|x_{(m,n)}\|\\
    &=\mathcal I-\limsup_m\,\bigg(\mathcal I_m-\limsup_{n}\|x_{(m,n)}\|\bigg)\\
    &=\mathcal I-\limsup_m\|{\bf x}_{(m)}+c_{0,\mathcal I_m}(X)\|\\
    &=\|y({\bf x})+c_{0,\mathcal I}((\ell_\infty(X)/c_{0,\mathcal I_m}(X))_m)\|.
\end{align*}
So, the quotient map induced by $\pi$ is a Banach lattice isometry from $\ell_\infty(X)/c_{0,\sum_\I\I_n}(X)$ onto $\ell_\infty((\ell_\infty(X)/c_{0,\mathcal I_n}(X))_n)\Big{/}c_{0,\mathcal I}((\ell_\infty(X)/c_{0,\mathcal I_n}(X))_n))$.
\end{proof}

\subsection{The interplay of \texorpdfstring{$\ell_\infty/c_{0,\mathcal I}$}{linfty/c0I} and the \texorpdfstring{$\I$}{I}-AD families}
 
 We begin by collecting some facts about $\mathfrak{ad}(\mathcal I)$.

\begin{remark}\label{facts about ad(I)}
\begin{enumerate}
\item If $\mathcal I$ is meager, then $\mathfrak{ad}(\ideal)=2^{\aleph_0}$ \cite[Lemma 2.3]{Leonetti2018}. The converse is false as  there is a non meager ideal $\mathcal I$ with a  $\mathcal I$-AD family of size
$2^{\aleph_0}$ (see  \cite[Example 5.1]{farkas-khomskii-vidnyanszky}).

\item If $c_{0,\ideal}$ is complemented in $\ell_\infty$, then $\mathfrak{ad}(\ideal)\leq\aleph_0$ \cite[Theorem A]{kania}. 
The converse fails by \cite[Example 1]{HrusakSaenz}.
\end{enumerate}
\end{remark}

For a compact Hausdorff space $K$, $C(K)$ denotes the Banach space of all continuous functions from $K$ to $\mathbb K$, endowed with the supremum norm.
If $\tau$ is a cardinal, $c_0(\tau)$ is the Banach space of all functions $f\colon\tau\to\mathbb K$ such that for each $\varepsilon>0$ the set $\{\gamma\in\tau\,\colon\,|f(\gamma)|\geq\varepsilon\}$ is finite.  

The following result is essentially due to H. Rosenthal. Indeed, (1) $\Leftrightarrow$ (2) is proved in \cite[p. 230]{rosenthal}. (3) $\Rightarrow$ (2) is clear. On the other hand,  (1) $\Rightarrow$ (3) follows by an standard application of the Urysohn Lemma (see for instance \cite[Proposition 4.3.11]{AK}).

\begin{theorem}\label{caracterizacion cc rosental}
    Let $K$ be a compact Hausdorff space and $\mathfrak m$ be a cardinal. The following statements are equivalent:
    \begin{enumerate}
        \item $K$ does not satisfy the $\mathfrak m$-chain condition, that is, there is a disjoint family of open subsets of $K$ with cardinality $\mathfrak m$.
        \item There is an isomorphism from $c_0(\mathfrak m)$ into $C(K)$.
        \item There is a Banach lattice isomorphism from $c_0(\mathfrak m)$ into $C(K)$.
    \end{enumerate}
\end{theorem}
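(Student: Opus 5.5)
The plan is to prove the cycle $(1)\Rightarrow(3)\Rightarrow(2)\Rightarrow(1)$. The implication $(3)\Rightarrow(2)$ is immediate, since a Banach lattice isomorphism into $C(K)$ is in particular a linear isomorphism into $C(K)$. Throughout, $\mathfrak m$ is treated as an infinite cardinal, since this is the case in which the statement is used.

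For $(1)\Rightarrow(3)$, let $\{U_\gamma\colon\gamma<\mathfrak m\}$ be pairwise disjoint nonempty open sets and pick $t_\gamma\in U_\gamma$. By Urysohn's lemma choose continuous $f_\gamma\colon K\to[0,1]$ with $f_\gamma(t_\gamma)=1$ and $\supp f_\gamma\subseteq U_\gamma$. Define $T\colon c_0(\mathfrak m)\to C(K)$ by $T(a)=\sum_\gamma a_\gamma f_\gamma$.
\begin{enumerate}
\item For finitely supported $a$ the sum is a finite sum of functions with disjoint supports. Hence $\|Ta\|_\infty=\max_\gamma|a_\gamma|=\|a\|$, and $T$ commutes with $|\cdot|$ because the supports are disjoint.
\item By density, $T$ extends to an isometric lattice embedding of $c_0(\mathfrak m)$ into $C(K)$.
\item Continuity of the limit is automatic, since the partial sums converge uniformly.
\end{enumerate}
This yields (3), and in fact an isometric lattice copy.

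The main obstacle is $(2)\Rightarrow(1)$, which is Rosenthal's argument. Let $T\colon c_0(\mathfrak m)\to C(K)$ be an isomorphism with $\|Ta\|\geq\delta\|a\|$, and set $g_\gamma=Te_\gamma$.
\begin{enumerate}
\item Choose $t_\gamma$ with $|g_\gamma(t_\gamma)|\geq\delta$ and put $V_\gamma=\{t\colon|g_\gamma(t)|>\delta/2\}$, which is open.
\item Since $\sum_\gamma|g_\gamma(t)|\le\|T\|$ for every $t$ (by testing against $\pm1$ combinations of finitely many $e_\gamma$), each point $t$ lies in at most $2\|T\|/\delta$ of the sets $V_\gamma$.
\item So $\{V_\gamma\}$ is a point-finite family of $\mathfrak m$ nonempty open sets with uniformly bounded order.
\item Extract $\mathfrak m$ pairwise disjoint open sets. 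The intended route is Rosenthal's combinatorial lemma, applied to the measures $\delta_t\circ T$ (equivalently, using $T^*$), which allows shrinking to a subfamily of size $\mathfrak m$ and open sets $W_\gamma\subseteq V_\gamma$ that are pairwise disjoint.
\end{enumerate}
If the order bound $n$ is at hand, a cleaner alternative for the last step is induction on $n$. Either the sets $V_\gamma\setminus\overline{\bigcup_{\beta\neq\gamma}V_\beta}$ are nonempty for $\mathfrak m$ many $\gamma$, and these are disjoint, or one passes to pairwise intersections, which are open, nonempty for many pairs, and have smaller order. That this cardinality bookkeeping succeeds for singular $\mathfrak m$ is exactly the delicate part, and there I would defer to \cite[p. 230]{rosenthal}.
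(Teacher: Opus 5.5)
Your decomposition is the paper's. You prove $(1)\Rightarrow(3)$ with Urysohn bumps supported in the disjoint open sets, which in fact gives an isometric lattice embedding. $(3)\Rightarrow(2)$ is trivial. For $(2)\Rightarrow(1)$ you appeal to \cite[p.~230]{rosenthal}, and that citation is all the paper offers for this implication. Your reduction of $(2)\Rightarrow(1)$ is also sound: testing $T$ on $\sum_{\gamma\in F}\theta_\gamma e_\gamma$ gives $\sum_\gamma|g_\gamma(t)|\le\|T\|$, so the nonempty open sets $V_\gamma$ have order at most some $n<2\|T\|/\delta$.

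The self-contained finish you sketch, however, does not work as stated.

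\begin{itemize}
\item A point lying in $k$ of the $V_\gamma$ lies in $\binom{k}{2}$ of the sets $V_\gamma\cap V_\beta$. So passing to pairwise intersections does not lower the order once $n\geq 3$, and the induction on $n$ does not close.
\item The Rosenthal-lemma route, as you describe it, only makes $\sum_{\beta\neq\gamma}|g_\beta(t_\gamma)|$ small at the single points $t_\gamma$. You do not say how to turn this into disjoint open sets.
\item Nothing is delicate about singular $\mathfrak m$.
\end{itemize}

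Here is a direct argument valid for every infinite $\mathfrak m$.

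\begin{enumerate}
\item Put $S(t)=\{\beta:t\in V_\beta\}$. Choose $t_\gamma\in V_\gamma$ with $|S(t_\gamma)|$ maximal among points of $V_\gamma$, and set $S_\gamma=S(t_\gamma)$ and $O_\gamma=\bigcap_{\beta\in S_\gamma}V_\beta$.
\item $O_\gamma$ is an open neighbourhood of $t_\gamma$ contained in $V_\gamma$. Every $s\in O_\gamma$ satisfies $S(s)\supseteq S_\gamma$ and, by maximality, $|S(s)|\le|S_\gamma|$. Hence $S\equiv S_\gamma$ on $O_\gamma$.
\item Consequently $O_\gamma\cap O_{\gamma'}=\emptyset$ whenever $S_\gamma\neq S_{\gamma'}$.
\item Since $\gamma\in S_\gamma$ and $|S_\gamma|\le n$, the map $\gamma\mapsto S_\gamma$ is at most $n$-to-one, so it takes $\mathfrak m$ distinct values.
\item Choosing one $O_\gamma$ for each value yields $\mathfrak m$ pairwise disjoint nonempty open sets.
\end{enumerate}

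With this lemma in place of the deferred step, your proof is complete and self-contained. It needs no Rosenthal lemma and no cardinal arithmetic beyond the fact that a finite-to-one image of a set of infinite cardinality $\mathfrak m$ has cardinality $\mathfrak m$. The finite case, which you set aside, is trivial: for a compact Hausdorff $K$, both (1) and (2) just say $|K|\geq\mathfrak m$.
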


Now, we are ready to prove the main theorem of this section. Recall that the \textit{cellularity} of a topological space $Y$, denoted by $c(Y)$, is the largest cardinal $\kappa$ such that there is a family of pairwise disjoint nonempty open subsets of $Y$ of size $\kappa$. Recall that in Section \ref{preliminares} we defined the closed set $K_\ideal=\{p\in\beta\mathbb N:\I^*\subseteq p\}\subseteq \beta\N$ associated to the ideal $\ideal$.

\begin{theorem}
\label{AD-families and KI}
Let $\mathcal I$ be an ideal on $\mathbb N$ and  $\kappa$ be an infinite cardinal. 
\begin{enumerate}
    \item There is a Banach lattice isomorphism from $c_0(\kappa)$ into $\ell_\infty/c_{0,\mathcal I}$ if and only if there is an $\ideal$-AD family of size $\kappa$.
    \item  $c(K_{\mathcal I})=\mathfrak{ad}(\mathcal I).$
\end{enumerate}
\end{theorem}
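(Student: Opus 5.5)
The plan is to pass through the representation $\ell_\infty/c_{0,\mathcal I}\equiv C(K_{\mathcal I})$ of Proposition~\ref{isometrias}(2) and then apply Rosenthal's Theorem~\ref{caracterizacion cc rosental}. This reduces part~(1) to a purely topological statement: $K_{\mathcal I}$ contains a family of $\kappa$ pairwise disjoint nonempty open sets if and only if there is an $\mathcal I$-AD family of size $\kappa$. The isometry is a lattice isometry, since it is induced by the Stone–\v{C}ech extension $x\mapsto \tilde x|_{K_\mathcal I}$. Hence Banach-lattice embeddings into $\ell_\infty/c_{0,\mathcal I}$ correspond to those into $C(K_\mathcal I)$, and (1)$\Leftrightarrow$(3) of Theorem~\ref{caracterizacion cc rosental} applies. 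Part~(2) then follows by taking suprema over $\kappa$: $c(K_\mathcal I)$ is the supremum of the sizes of cellular families, and $\mathfrak{ad}(\mathcal I)$ is the supremum of the sizes of $\mathcal I$-AD families. The finite case is harmless, since both sides are at least $\aleph_0$ whenever $K_\mathcal I$ is infinite; if $\mathcal I^+$ is degenerate, I would check the trivial edge cases separately.

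The key step is the dictionary between clopen subsets of $K_\mathcal I$ and elements of $\mathcal P(\omega)/\mathcal I$. Following the argument of Lemma~\ref{lem:support-stone}, the sets $\widehat A\cap K_\mathcal I$ for $A\subseteq\omega$ form a clopen base of $K_\mathcal I\cong \Stone(\mathcal P(\omega)/\mathcal I)$. Moreover:
\begin{itemize}
\item $\widehat A\cap K_\mathcal I\neq\emptyset$ if and only if $A\in\mathcal I^+$, because $\mathcal I^*\cup\{A\}$ has the finite intersection property exactly when $A\notin\mathcal I$;
\item $(\widehat A\cap K_\mathcal I)\cap(\widehat B\cap K_\mathcal I)=\widehat{A\cap B}\cap K_\mathcal I$, which is empty if and only if $A\cap B\in\mathcal I$.
\end{itemize}
So an $\mathcal I$-AD family $\mathcal A$ of size $\kappa$ yields the pairwise disjoint nonempty open family $\{\widehat A\cap K_\mathcal I: A\in\mathcal A\}$. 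These sets are distinct because they are disjoint and nonempty, so the family has size $\kappa$.

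Conversely, given $\kappa$ pairwise disjoint nonempty open sets $O_\alpha\subseteq K_\mathcal I$, I shrink each one to a nonempty basic clopen set $\widehat{A_\alpha}\cap K_\mathcal I\subseteq O_\alpha$. By the two facts above, $A_\alpha\in\mathcal I^+$ and $A_\alpha\cap A_\beta\in\mathcal I$ for $\alpha\neq\beta$. The sets $A_\alpha$ are pairwise distinct because each lies in $\mathcal I^+$ while their pairwise intersections lie in $\mathcal I$. Hence $\{A_\alpha\}$ is an $\mathcal I$-AD family of size $\kappa$.

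The only point needing care, and the step I expect to be the main obstacle, is verifying that Kania's isometry $\ell_\infty/c_{0,\mathcal I}\equiv C(K_\mathcal I)$ is a lattice isometry, so that Theorem~\ref{caracterizacion cc rosental}(3) transfers back. If this is inconvenient to verify, I would instead prove the direction from AD families to lattice copies directly. The map $e_A\mapsto \chi_A+c_{0,\mathcal I}$ extends to a lattice isometric embedding of $c_0(\mathcal A)$. To see this, observe that the classes $\chi_A+c_{0,\mathcal I}$ are pairwise disjoint positive norm-one elements: they are disjoint because $\chi_A\wedge\chi_B=\chi_{A\cap B}\in c_{0,\mathcal I}$, and they have norm one by Remark~\ref{norm in the quotient}, since $\mathcal I$-$\limsup\chi_A=1$ for $A\in\mathcal I^+$. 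Disjoint normalized families in an AM-space span $c_0$ lattice-isometrically, since the norm of a finite disjoint sum is the maximum. For the reverse direction I would use the lattice isomorphism only through disjointness: it maps the disjoint unit vectors $e_\alpha$ to pairwise disjoint nonzero elements of $C(K_\mathcal I)$. The open sets $\{\tilde f_\alpha\neq0\}$ are then pairwise disjoint and nonempty, which again gives a cellular family of size $\kappa$ and hence an $\mathcal I$-AD family of size $\kappa$ by the clopen dictionary above.
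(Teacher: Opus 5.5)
Your proposal is correct, but it runs the argument in the opposite direction from the paper. The paper proves (1) directly inside the quotient lattice, with no topology. For an $\mathcal I$-AD family it uses exactly your fallback: the classes $\chi_{A_\alpha}+c_{0,\mathcal I}$ are disjoint, positive, and of norm one by the $\mathcal I$-$\limsup$ formula. For the converse it takes positive representatives $y_\alpha$ of $T(e_\alpha)$, finds $A_\alpha\in\mathcal I^+$ and $\varepsilon_\alpha>0$ with $\varepsilon_\alpha\chi_{A_\alpha}\le y_\alpha$, and reads off $A_\alpha\cap A_\beta\in\mathcal I$ from $T(e_\alpha)\wedge T(e_\beta)=0$. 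Only then does it deduce (2), by combining (1) with Rosenthal's theorem and $\ell_\infty/c_{0,\mathcal I}\equiv C(K_\mathcal I)$.

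You instead prove the topological statement first, via the dictionary $A\mapsto\widehat A\cap K_\mathcal I$ between $\mathcal P(\omega)/\mathcal I$ and the clopen base of $K_\mathcal I$, and then get (1) from Rosenthal's (1)$\Leftrightarrow$(3). Your route yields (2) straight from the dictionary, uniformly for every cardinal including finite ones, with no Banach-space input. It also makes transparent that $\mathfrak{ad}(\mathcal I)$ is just the cellularity of the Boolean algebra $\mathcal P(\omega)/\mathcal I$. The paper's route yields (1) with an explicit lattice-isometric copy and without invoking Rosenthal. Its converse also avoids the $C(K_\mathcal I)$ representation entirely, whereas your fallback converse still passes through it.

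The point you flag as the main obstacle is genuine but harmless. Proposition~\ref{isometrias} only records a Banach isometry, and the paper's proof of (2) tacitly needs a lattice isometry to transfer Rosenthal's condition (3) back to $\ell_\infty/c_{0,\mathcal I}$. Your observation settles it: the identification is induced by the lattice homomorphism $x\mapsto\tilde x|_{K_\mathcal I}$. Its kernel is $c_{0,\mathcal I}$, and the induced quotient norm is $\max_{p\in K_\mathcal I}|\tilde x(p)|=\mathcal I\text{-}\limsup_n|x_n|$.
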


\begin{proof}
(1) Suppose that $\mathcal A=\{A_\alpha\}_{\alpha\in\kappa}$ is an $\mathcal I$-AD family of size $\kappa$. Define $\psi\colon\spn\{e_\alpha\,\colon\,\alpha\in\kappa\}\to\ell_\infty$ by
$$
\psi\left(\sum_{\alpha\in F}b_\alpha e_\alpha\right)=\sum_{\alpha\in F}b_\alpha\chi_{A_\alpha},\quad \sum_{\alpha\in F}b_\alpha e_\alpha\in \spn\{e_\alpha\,\colon\,\alpha\in\kappa\}.
$$
Notice that $\psi$ is linear. So, the operator $\phi\colon\spn\{e_\alpha\,\colon\,\alpha\in\kappa\}\to\ell_\infty/c_{0,\mathcal I}$ given by $\phi(x)=\psi(x)+c_{0,\mathcal I}$ if $x\in\mathrm{span}\{e_\alpha\,\colon\,\alpha\in\kappa\}$, is linear. To see that $\phi$ preserves the lattice operations, notice that $\{\chi_{A_\alpha}+c_{0,\mathcal I}\,\colon\,\alpha\in k\}$ are disjoint positive elements of $\ell_\infty/c_{0,\mathcal I}$. Thus,
\begin{align*}
     \left|\phi\left(\sum_{\alpha\in F}b_\alpha e_\alpha\right)\right|=\left|\left(\sum_{\alpha\in F}b_\alpha\chi_{A_\alpha}\right)+c_{0,\mathcal I}\right|
     =\sum_{\alpha\in F}|b_\alpha|(\chi_{A_\alpha}+c_{0,\mathcal I})=\phi\left(\sum_{\alpha\in F}|b_\alpha| e_\alpha\right),
\end{align*}
if $F\subset\kappa$ is a finite set and $\{b_\alpha\,\colon\,\alpha\in F\}\subset\mathbb K$. Also, note that 
    \begin{gather*}
       \left \|\phi(\sum_{\alpha\in F}b_\alpha e_\alpha)\right\|=\left\|\sum_{\alpha\in F}b_\alpha\chi_{A_\alpha}+c_{0,\mathcal I}\right\|=\mathcal I-\limsup\left|\sum_{\alpha\in F}b_\alpha\chi_{A_\alpha}\right|=\max_{\alpha\in F}|b_\alpha|,
    \end{gather*}
for $F\subset\kappa$ finite and $\{b_\alpha\,\colon\,\alpha\in F\}\subset\mathbb K$. So, $\phi$ can be extended to a Banach lattice isometry from  $c_0(\kappa)$ into $\ell_\infty/c_{0,\mathcal I}$.
    
Conversely, let $T\colon c_0(\kappa)\to\ell_\infty/c_{0,\mathcal I}$ be Banach lattice isomorphism. For each $\alpha\in\kappa$, we let $T(e_\alpha)=y_\alpha+c_{0,\mathcal I}$. Since $y_\alpha\not\in c_{0,\mathcal I}$, there are a set $A_\alpha\not\in\mathcal I$ and $\varepsilon_\alpha>0$ such that $\varepsilon_\alpha\chi_{A_\alpha}\leq y_\alpha$. If $\alpha,\beta\in\kappa$ and $\alpha\neq\beta$, then     
\begin{gather*}        
{\bf0}\leq(\varepsilon_\alpha\chi_{A_\alpha}+c_{0,\mathcal I})\wedge(\varepsilon_\beta\chi_{A_\beta}+c_{0,\mathcal I})\leq (y_\alpha+c_{0,\mathcal I})\wedge (y_\beta+c_{0,\mathcal I})=T(e_\alpha\wedge e_\beta)={\bf0}.
\end{gather*}
So, $A_\alpha\cap A_\beta\in\mathcal I$. Hence, $\{A_\alpha\,\colon\,\alpha\in\kappa\}$ is an $\mathcal I$-AD family of size $\kappa$.

(2) Let $\mathfrak m$ be a cardinal and assume that there is a family of pairwise disjoint nonempty open subsets of $K_{\mathcal I}$ with cardinality $\mathfrak m$. By (1) and Theorem \ref{caracterizacion cc rosental}, there exists a Banach lattice isomorphism from $c_0(\mathfrak m)$ into $\ell_\infty/c_{0,\mathcal I}$. Thus, $\mathfrak m\leq\mathfrak{ad}(\mathcal I)$. Whence, $c(K_{\mathcal I})\leq\mathfrak{ad}(\mathcal I)$. 
    
Conversely, let $\mathcal A$ be a $\mathcal I$-AD family of size $\kappa$. Once again by (1) and Theorem \ref{caracterizacion cc rosental}, there exists a family of pairwise disjoint nonempty open subsets of $K_\mathcal I$ which has cardinality $\kappa$. Therefore, $\kappa\leq c(K_{\mathcal I})$. So, $\mathfrak{ad}(\mathcal I)\leq c(K_{\mathcal I})$ and we are done.
\end{proof}

The following result is probably known. We include a proof based on Theorem \ref{AD-families and KI}.  The idea comes from \cite[Proof of Theorem A]{kania}.
Recall that $\ell_\infty(\tau)$ denotes the Banach space of all bounded functions $f\colon\tau\to\mathbb K$ endowed with the supremum norm. Also,  $\ell_1(\tau)$
is the Banach space of all functions $f\colon\tau\to\mathbb K$ such that $\sum_{a\in\tau}|f(a)|<\infty$.

\begin{corollary}
\label{omegamaximal}
Let $\mathcal I$ be an ideal on $\mathbb N$ and $\kappa$ be a cardinal. If $\mathcal I$ is $\kappa$-maximal, then every $\mathcal I$-AD family has size at most $\kappa$.
\end{corollary}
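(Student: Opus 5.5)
Throughout, I read ``$\mathcal I$ is $\kappa$-maximal'' as saying that $\mathcal I$ is the intersection of at most $\kappa$ maximal ideals. Concretely, there are ultrafilters $\{p_\gamma\}_{\gamma<\kappa}$ on $\mathbb N$ with
\[
\mathcal I=\bigcap_{\gamma<\kappa}\bigl(\mathcal P(\mathbb N)\setminus p_\gamma\bigr).
\]
Equivalently, $A\in\mathcal I^+$ if and only if $A\in p_\gamma$ for some $\gamma<\kappa$. If the paper's definition differs, the plan below should only need this translation step changed.

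The plan follows Kania's idea through Theorem~\ref{AD-families and KI}, with a combinatorial shortcut as a backup. First I define
\[
\Phi\colon \ell_\infty/c_{0,\mathcal I}\longrightarrow \ell_\infty(\kappa),\qquad \Phi({\bf x}+c_{0,\mathcal I})=\bigl(p_\gamma\text{-}\lim_n x_n\bigr)_{\gamma<\kappa}.
\]
This map is well defined: if ${\bf x}\in c_{0,\mathcal I}$ then, for each $\varepsilon>0$, the set $\{n:|x_n|\ge\varepsilon\}$ lies in $\mathcal I$, hence outside every $p_\gamma$, so every $p_\gamma$-limit vanishes. The map $\Phi$ is a lattice homomorphism, since ultrafilter limits commute with $\vee$ and $\wedge$. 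It is also an isometry. Indeed, by Remark~\ref{norm in the quotient} the quotient norm equals $\mathcal I\text{-}\limsup_n|x_n|$. For $b$ below this value, the set $\{n:|x_n|>b\}$ belongs to $\mathcal I^+$, so it lies in some $p_\gamma$, which gives $p_\gamma\text{-}\lim|x_n|\ge b$. Conversely, each $p_\gamma$ avoids $\mathcal I$, so each $p_\gamma$-limit is at most the $\mathcal I$-$\limsup$.

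Next, let $\mathcal A$ be an $\mathcal I$-AD family of size $\lambda$. By Theorem~\ref{AD-families and KI}(1) there are disjoint positive elements $\chi_A+c_{0,\mathcal I}$, $A\in\mathcal A$, each of norm $1$. Their images under $\Phi$ are pairwise disjoint, positive and of norm one in $\ell_\infty(\kappa)$. Each image $\Phi(\chi_A+c_{0,\mathcal I})$ has a coordinate $\gamma_A<\kappa$ equal to $1$, since every coordinate lies in $\{0,1\}$. Disjointness forces $\gamma_A\neq\gamma_B$ for $A\ne B$. Therefore $A\mapsto\gamma_A$ is injective and $\lambda\le\kappa$.

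The same argument can be phrased directly. Each $A\in\mathcal A$ lies in some $p_{\gamma_A}$. If $\gamma_A=\gamma_B$ with $A\ne B$, then $A\cap B\in p_{\gamma_A}$, which contradicts $A\cap B\in\mathcal I$. The only step that could be an obstacle is matching the paper's precise definition of $\kappa$-maximality with the representation of $\mathcal I^+$ as $\bigcup_\gamma p_\gamma$. If instead $\kappa$-maximality is phrased via $K_{\mathcal I}$ having a dense subset of size $\kappa$, I would use Theorem~\ref{AD-families and KI}(2). A family of pairwise disjoint nonempty open sets in $K_{\mathcal I}$ must meet a dense set in distinct points. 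Hence $c(K_{\mathcal I})\le\kappa$, and so $\mathfrak{ad}(\mathcal I)\le\kappa$.
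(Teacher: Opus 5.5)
Your proof is correct. Your reading of $\kappa$-maximality ($\mathcal I$ is an intersection of $\kappa$ maximal ideals, so $\mathcal I^+=\bigcup_{\gamma<\kappa}p_\gamma$) is the one the paper uses. The argument, however, takes a different route from the paper's.

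The paper starts from the same map ${\bf x}\mapsto(\mathcal I_\alpha^*\text{-}\lim{\bf x})_{\alpha<\kappa}$, whose kernel is $c_{0,\mathcal I}$, but then argues by contradiction at the Banach-space level. An $\mathcal I$-AD family of size $\kappa^+$ yields, through Theorem~\ref{AD-families and KI}, an embedding $\psi\colon c_0(\kappa^+)\to\ell_\infty(\kappa)$. Its adjoint is a weak$^*$-continuous surjection onto $c_0(\kappa^+)^*=\ell_1(\kappa^+)$. Goldstine's theorem then gives weak$^*$-density $\kappa$ for $\ell_\infty(\kappa)^*$ against $\kappa^+$ for $\ell_1(\kappa^+)$, a contradiction.

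You instead stay inside the lattice: pairwise disjoint $\{0,1\}$-valued norm-one vectors of $\ell_\infty(\kappa)$ need distinct witness coordinates. Your last paragraph reduces this further to a pigeonhole that needs no functional analysis at all. Choose $\gamma_A$ with $A\in p_{\gamma_A}$; then $A\cap B\in\mathcal I$ forbids $\gamma_A=\gamma_B$.

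What your route buys:
\begin{itemize}
\item It is more elementary.
\item It works verbatim for finite $\kappa$. There the paper's density count does not distinguish $\kappa$ from $\kappa^+$, so a dimension count would be needed, and Theorem~\ref{AD-families and KI} is only stated for infinite $\kappa$.
\item Your isometry computation supplies a detail the paper leaves implicit. The paper only records $\ker\Phi=c_{0,\mathcal I}$, whereas surjectivity of $\psi^*$ uses that the induced map $\ell_\infty/c_{0,\mathcal I}\to\ell_\infty(\kappa)$ is an isomorphic embedding.
\end{itemize}

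The paper's argument, for its part, follows Kania's proof of Theorem A. It presents the obstruction as a Banach-space fact ($\ell_\infty(\kappa)$ contains no copy of $c_0(\kappa^+)$), in line with the section's theme of reading combinatorial invariants of $\mathcal I$ off the quotient $\ell_\infty/c_{0,\mathcal I}$.
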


\begin{proof}
Let  $\{\mathcal I_\alpha\,\colon\,\alpha\in\kappa\}$ be a family of maximal ideals such that $\mathcal I=\bigcap_{\alpha\in\kappa}\mathcal I_\alpha$.
Then, the operator ${\bf x}\in\ell_\infty\mapsto\Phi({\bf x})=(\mathcal I_\alpha^*-\lim{\bf x})\in\ell_\infty(\kappa)$ is linear, continuous, and $\ker\Phi=c_{0,\mathcal I}$. Assume, towards a contradiction,  that there is an $\mathcal I$-AD family of size $\kappa^+$. By Theorem \ref{AD-families and KI}, we obtain a embedding  $\psi\colon c_0(\kappa^+)\to\ell_\infty(\kappa)$. Then
$\psi^*\colon\ell_\infty(\kappa)^*\to c_0(\kappa^+)^*$ is an onto linear operator. By Goldstein's theorem $\ell_\infty(\kappa)^*=\ell_1(\kappa)^{**}=\overline{\ell_1(\kappa)}^{\sigma(\ell_1(\kappa)^{**},\ell_1(\kappa)^*)}$, so $w^*$-$\mathrm{dens}(\ell_\infty(\kappa)^*)=\kappa$. On the other hand, $w^*$-$\mathrm{dens}(c_0(\kappa^+)^*)=k^+$ \cite[Theorem 13.3]{fabian-et-al}. This contradicts the well-known fact that if $f\colon X\to Y$ is an onto continuous map between topological spaces, then $\mathrm{dens}(Y)\leq\mathrm{dens}(X)$.
\end{proof}

The previous result is  an equivalence when $\kappa$ is a finite cardinal. This is a consequence of \cite[Proposition 4.3 and Corollary 4.7]{rincon-uzcateguiII}.

\begin{proposition}\label{caracterizacion ad(I) finite}
Let $\mathcal I$ be an ideal on $\mathbb N$ and $k$ a positive integer.  Then $\mathfrak{ad}(\mathcal I)=k$ if and only if $\mathcal I$ is $k$-maximal.
\end{proposition}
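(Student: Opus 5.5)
The plan is to prove both implications directly and combinatorially, using ultrafilters and the definition of $\mathcal I$-AD families. I read ``$\mathcal I$ is $k$-maximal'' as: $\mathcal I=\bigcap_{i=1}^k\mathcal I_i$ for $k$ pairwise distinct maximal ideals $\mathcal I_1,\ldots,\mathcal I_k$. Corollary~\ref{omegamaximal} points in the right direction, but its proof uses $\kappa^+$ and density arguments that are awkward for finite $\kappa$. So I would redo the finite case by hand.

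\emph{($\Leftarrow$)} Assume $\mathcal I=\bigcap_{i=1}^k\mathcal I_i$ with the $\mathcal I_i$ distinct maximal ideals.
\begin{enumerate}
\item \emph{Lower bound.} Since the ultrafilters $\mathcal I_i^*$ are pairwise distinct, a finite separation argument gives pairwise disjoint sets $B_1,\ldots,B_k$ with $B_i\in\mathcal I_i^*$. For instance, pick $C_{ij}\in\mathcal I_i^*\setminus\mathcal I_j^*$ and refine by taking $\bigcap_{j\neq i}C_{ij}$ minus the other such sets.
\item Each $B_i\notin\mathcal I_i\supseteq\mathcal I$, so $B_i\in\mathcal I^+$. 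Disjointness makes $\{B_1,\ldots,B_k\}$ an $\mathcal I$-AD family, hence $\mathfrak{ad}(\mathcal I)\geq k$.
\item \emph{Upper bound.} Let $A_0,\ldots,A_k$ be an $\mathcal I$-AD family. Each $A_j\in\mathcal I^+$, so $A_j\notin\mathcal I_{i(j)}$ for some index $i(j)$, i.e.\ $A_j\in\mathcal I_{i(j)}^*$.
\item By pigeonhole there are $j\neq j'$ with $i(j)=i(j')=i$. Then $A_j\cap A_{j'}\in\mathcal I_i^*$, while also $A_j\cap A_{j'}\in\mathcal I\subseteq\mathcal I_i$, a contradiction. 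Hence $\mathfrak{ad}(\mathcal I)\leq k$.
\end{enumerate}

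\emph{($\Rightarrow$)} Assume $\mathfrak{ad}(\mathcal I)=k$ and fix an $\mathcal I$-AD family $A_1,\ldots,A_k$ of maximal size.
\begin{enumerate}
\item \emph{The complement is small.} Let $C=\omega\setminus\bigcup_j A_j$. Then $C\in\mathcal I$, since otherwise $\{A_1,\ldots,A_k,C\}$ would be a larger $\mathcal I$-AD family.
\item \emph{Each restriction is maximal.} Suppose $\mathcal I\restriction A_j$ were not a maximal ideal on $A_j$. Then there would be a partition $A_j=D\cup E$ with $D,E\in\mathcal I^+$. Replacing $A_j$ by $D$ and $E$ gives an $\mathcal I$-AD family of size $k+1$, because all new intersections are subsets of old ones or empty. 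This contradicts $\mathfrak{ad}(\mathcal I)=k$.
\item \emph{The candidate maximal ideals.} Define $\mathcal I_j=\{B\subseteq\omega:B\cap A_j\in\mathcal I\}$. It is an ideal, and it is proper since $A_j\notin\mathcal I_j$. It is maximal: for any $B$, either $B\cap A_j\in\mathcal I$ or $A_j\setminus B\in\mathcal I$ by the previous step, so $B\in\mathcal I_j$ or $\omega\setminus B\in\mathcal I_j$.
\item \emph{They are distinct.} For $i\neq j$ we have $A_j\cap A_i\in\mathcal I$, so $A_j\in\mathcal I_i$, while $A_j\notin\mathcal I_j$.
\item \emph{They recover $\mathcal I$.} Clearly $\mathcal I\subseteq\mathcal I_j$ for every $j$. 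Conversely, if $B\in\bigcap_j\mathcal I_j$, then $B\subseteq C\cup\bigcup_j(B\cap A_j)$, which lies in $\mathcal I$ as a finite union of members of $\mathcal I$. Hence $\mathcal I=\bigcap_{j=1}^k\mathcal I_j$, so $\mathcal I$ is $k$-maximal.
\end{enumerate}

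I do not expect a serious obstacle. The most delicate points are the splitting step, which shows that a maximal-size AD family has maximal restrictions, and making sure the $k$ maximal ideals obtained are pairwise distinct. The only other thing to check is that the intended definition of $k$-maximal requires distinctness, which is what makes the count exact.
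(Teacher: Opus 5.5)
Your proof is correct, but it follows a different route from the paper. The paper gives no argument of its own here: it presents the proposition as the finite-cardinal sharpening of Corollary~\ref{omegamaximal} and deduces it from \cite[Proposition 4.3 and Corollary 4.7]{rincon-uzcateguiII}. You instead work entirely inside $\mathcal P(\omega)$:
\begin{itemize}
\item pigeonhole on the ultrafilters $\mathcal I_i^*$ for the upper bound;
\item separation of finitely many distinct ultrafilters by disjoint sets for the lower bound;
\item for the converse, the observation that a maximal-size $\mathcal I$-AD family covers $\omega$ up to a set in $\mathcal I$, and that $\mathcal I$ restricts to a maximal ideal on each of its members, so the $\mathcal I_j=\{B:B\cap A_j\in\mathcal I\}$ are exactly the required maximal ideals.
\end{itemize}
Your reading of ``$k$-maximal'' as an intersection of $k$ pairwise distinct maximal ideals agrees with how the paper uses the notion in the proof of the Fubini-product formula.

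The citation keeps the paper short and places the result within the companion paper's theory of $k$-maximal ideals. Your argument buys three things:
\begin{itemize}
\item it is self-contained;
\item it makes distinctness, and hence the exact count $k$, explicit;
\item it replaces the density-character argument of Corollary~\ref{omegamaximal} by pigeonhole. That argument is tailored to infinite $\kappa$: it relies on Theorem~\ref{AD-families and KI}(1), which is stated for infinite cardinals, and density characters do not separate finite cardinals, so a dimension count would be needed there.
\end{itemize}
Your construction also makes Theorem~\ref{AD-families and KI}(2) concrete in this case. $K_{\mathcal I}$ consists exactly of the $k$ ultrafilters $\mathcal I_j^*$, with exactly one in each clopen set $\widehat{A_j}$, so $c(K_{\mathcal I})=k$.
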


Next, we are going to show that $\mathfrak{ad}(\mathcal I\times\mathcal J)=\mathfrak{ad}(\mathcal I)\mathfrak{ad}(\mathcal J)$. For that end, we need to introduce some notation. For $A\subseteq\mathbb N\times\mathbb N$, recall that $(A)_n$ is the set $\{m\in \N:(n,m)\in A\}$. Given an  ideal $\mathcal I$ on $\mathbb N$, we set $\varphi_\mathcal I(A)=\{n\in\mathbb N\,\colon\,(A)_n\not\in\mathcal I\}.$ We have the following facts:
\begin{enumerate}
    \item If $A,B\subseteq\mathbb N$, then $\varphi_\mathcal I(A)\cap\varphi_\mathcal I(B)\subseteq\varphi_\mathcal I(A\cap B).$
    \item  If $\{\mathcal J_i\,\colon\,i\in J\}$ is a family of ideals on $\mathbb N$ and $\mathcal J=\bigcap_{i\in J}\mathcal J_i$, then
  \begin{gather*}
   \varphi_{\mathcal J}(A)=\bigcup_{i\in I}\varphi_{\mathcal J_i}(A).
  \end{gather*}

    \item Let $\{\mathcal I_j\,\colon\,j\in I\}$ and $\{\mathcal J_i\,\colon\,i\in J\}$ be families of ideals on $\mathbb N$. If $\mathcal I=\bigcap_{j\in I}\mathcal I_j$
    and $\mathcal J=\bigcap_{i\in J}\mathcal J_i$, then
    \begin{gather}
    \label{formulas fubini}
       \mathcal I\times\mathcal J=\bigcap_{j\in I}(\mathcal I_j\times\mathcal J),\quad \mbox{and}
       \quad \mathcal I\times\mathcal J=\bigcap_{i\in J}(\mathcal I\times\mathcal J_i).
    \end{gather}
\end{enumerate}
 Finally, it is not difficult to check that
if $\mathcal I$ and $\mathcal J$ are maximal ideals on $\mathbb N$, then $\mathcal I\times\mathcal J$ is maximal on $\mathbb N\times\mathbb N$.

\begin{theorem} 
If $\mathcal I$ and $\mathcal J$ are ideals on $\mathbb N$, then $\mathfrak{ad}(\mathcal I\times\mathcal J)=\mathfrak{ad}(\mathcal I)\mathfrak{ad}(\mathcal J)$.
\end{theorem}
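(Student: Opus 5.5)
The plan is to prove the two inequalities separately. The lower bound is a direct product construction. The upper bound treats the finite case through $k$-maximality and reduces the infinite case, via Theorem~\ref{isomorphism quotient of a fubini-product}, to a cellularity count over the fibres; that count is the step I have not fully worked out.

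\emph{Lower bound $\mathfrak{ad}(\mathcal I)\mathfrak{ad}(\mathcal J)\le\mathfrak{ad}(\mathcal I\times\mathcal J)$.} Let $\{A_\alpha\}_{\alpha<\kappa}$ be an $\mathcal I$-AD family and $\{B_\beta\}_{\beta<\lambda}$ a $\mathcal J$-AD family, and put $C_{\alpha,\beta}=A_\alpha\times B_\beta$. For $m\in A_\alpha$ the section $(C_{\alpha,\beta})_m$ equals $B_\beta\notin\mathcal J$. Hence $\varphi_{\mathcal J}(C_{\alpha,\beta})=A_\alpha\notin\mathcal I$, so $C_{\alpha,\beta}\in(\mathcal I\times\mathcal J)^+$. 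For $(\alpha,\beta)\neq(\alpha',\beta')$ we have $C_{\alpha,\beta}\cap C_{\alpha',\beta'}=(A_\alpha\cap A_{\alpha'})\times(B_\beta\cap B_{\beta'})$, and two cases arise:
\begin{itemize}
\item if $\beta\neq\beta'$, every section lies in $\mathcal J$, so $\varphi_{\mathcal J}=\emptyset$;
\item if $\beta=\beta'$ and $\alpha\ne\alpha'$, then $\varphi_{\mathcal J}\subseteq A_\alpha\cap A_{\alpha'}\in\mathcal I$.
\end{itemize}
So $\{C_{\alpha,\beta}\}$ is $\mathcal I\times\mathcal J$-AD of size $\kappa\lambda$. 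Since $\mathfrak{ad}$ is a supremum and both cardinals are at least $1$ for proper ideals, taking suprema gives the inequality.

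\emph{Upper bound, finite case.} If $\mathfrak{ad}(\mathcal I)=k$ and $\mathfrak{ad}(\mathcal J)=l$ are finite, Proposition~\ref{caracterizacion ad(I) finite} makes $\mathcal I$ $k$-maximal and $\mathcal J$ $l$-maximal. Write $\mathcal I=\bigcap_{j\le k}\mathcal I_j$ and $\mathcal J=\bigcap_{i\le l}\mathcal J_i$ with maximal factors. Applying \eqref{formulas fubini} twice gives $\mathcal I\times\mathcal J=\bigcap_{j,i}\mathcal I_j\times\mathcal J_i$. Each $\mathcal I_j\times\mathcal J_i$ is maximal, so $\mathcal I\times\mathcal J$ is $kl$-maximal, and Corollary~\ref{omegamaximal} gives $\mathfrak{ad}(\mathcal I\times\mathcal J)\le kl$.

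\emph{Upper bound, infinite case.} Here $\mathfrak{ad}(\mathcal I)\mathfrak{ad}(\mathcal J)=\max\{\kappa,\lambda\}\eqqcolon\theta$, where $\kappa=\mathfrak{ad}(\mathcal I)$ and $\lambda=\mathfrak{ad}(\mathcal J)$. One cannot simply push an $\mathcal I\times\mathcal J$-AD family $\{C_\gamma\}$ down to $\{\varphi_{\mathcal J}(C_\gamma)\}$: the inclusion $\varphi_{\mathcal J}(C)\cap\varphi_{\mathcal J}(D)\subseteq\varphi_{\mathcal J}(C\cap D)$ fails unless $\mathcal J$ is maximal. Instead I would use Theorem~\ref{isomorphism quotient of a fubini-product} together with Theorem~\ref{AD-families and KI}.
\begin{enumerate}
\item Suppose $\mu>\theta$; we may take $\mu=\theta^+$, which is regular. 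Theorem~\ref{AD-families and KI} gives a family of pairwise disjoint norm-one positive elements $\{u_\gamma\}_{\gamma<\mu}$ in $\ell_\infty(E)/c_{0,\mathcal I}(E)$, where $E=\ell_\infty/c_{0,\mathcal J}\equiv C(K_{\mathcal J})$ and $c(K_{\mathcal J})=\lambda$.
\item Represent each $u_\gamma$ by $(u_\gamma(m))_m$ with $0\le u_\gamma(m)\le 1$, and put $S_\gamma=\{m:\|u_\gamma(m)\|\ge 1/2\}\notin\mathcal I$.
\item Disjointness in the quotient says that for $\gamma\neq\delta$ the set $\{m:\|u_\gamma(m)\wedge u_\delta(m)\|\ge\varepsilon\}$ lies in $\mathcal I$ for every $\varepsilon>0$.
\item In each fibre $E$, fix $m$ and a threshold $\varepsilon$. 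Because $c(K_{\mathcal J})=\lambda$, any $\lambda^+$ of the elements $u_\gamma(m)$ that are large at $m$ must contain two with a nonzero meet. This yields a colouring of $[\mu]^2$ by pairs $(m,\varepsilon)$.
\item Apply an Erdős–Rado/Dushnik–Miller partition argument, or a $\Delta$-system-type refinement using the regularity of $\mu$. The aim is a subfamily of size $\mu$, a rational $\varepsilon$, and for each $\gamma$ a set $S'_\gamma\subseteq S_\gamma$ with $S'_\gamma\notin\mathcal I$ and $S'_\gamma\cap S'_\delta\in\mathcal I$ for $\gamma\ne\delta$. That family would be $\mathcal I$-AD of size $\mu>\kappa$, a contradiction.
\end{enumerate}

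The main obstacle is step 5. The meets $u_\gamma(m)\wedge u_\delta(m)$ need not be bounded below in norm, so the fibrewise cellularity bound $\lambda$ does not directly control which $m$ are shared. Extracting the $\mathcal I$-AD subfamily therefore requires a careful partition-calculus argument; I would try the Dushnik–Miller theorem $\mu\to(\mu,\omega)^2$ for regular $\mu$ first.
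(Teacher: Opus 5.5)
You have a genuine gap at step~5, and it cannot be filled. Your lower bound is correct, and more elementary than the paper's, which gets it by embedding $c_0(\kappa)$ into the quotient via Theorems~\ref{isomorphism quotient of a fubini-product} and~\ref{AD-families and KI}. Your finite case is the paper's argument. The trouble is that the infinite-case bound $\mathfrak{ad}(\mathcal I\times\mathcal J)\le\max\{\mathfrak{ad}(\mathcal I),\mathfrak{ad}(\mathcal J)\}$ is false in ZFC, so the statement itself fails.

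\textbf{A counterexample.} Let $\mathcal U$ be a free ultrafilter and $\mathcal I=\{A\subseteq\N:\N\setminus A\in\mathcal U\}$, so $\mathfrak{ad}(\mathcal I)=1$. Partition $\N=\bigcup_n P_n$ into infinite sets, pick free ultrafilters $\mathcal V_n\ni P_n$, and let $\mathcal J=\{B\subseteq\N: B\notin\mathcal V_n \text{ for all } n\}$. Put $\sigma(B)=\{n:B\in\mathcal V_n\}$. Then $\sigma(B\cap B')=\sigma(B)\cap\sigma(B')$ and $B\in\mathcal J\iff\sigma(B)=\emptyset$. So $\sigma$ sends a $\mathcal J$-AD family to pairwise disjoint nonempty subsets of $\N$, and since $\{P_n\}_n$ is $\mathcal J$-AD, $\mathfrak{ad}(\mathcal J)=\aleph_0$.

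Now fix an injection $e\colon 2^{<\omega}\to\N$ and, for $x\in 2^\omega$, let $C_x=\bigcup_{m}\{m\}\times P_{e(x\restriction m)}$. All sections of $C_x$ are $\mathcal J$-positive, so $C_x\notin\mathcal I\times\mathcal J$. If $x\neq y$ first differ at $k$, then $(C_x\cap C_y)_{(m)}=\emptyset$ for all $m>k$. Hence $\{m:(C_x\cap C_y)_{(m)}\notin\mathcal J\}$ is finite and $C_x\cap C_y\in\mathcal I\times\mathcal J$. Thus $\mathfrak{ad}(\mathcal I\times\mathcal J)=\mathfrak c>\aleph_0=\mathfrak{ad}(\mathcal I)\mathfrak{ad}(\mathcal J)$.

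This also shows concretely what goes wrong in step~5. Here $S_\gamma=\N$ for every $\gamma$, and every $\mathcal I$-positive $S'_\gamma$ belongs to $\mathcal U$, so no two of them are $\mathcal I$-almost disjoint. The fibrewise collisions you get in step~4 occur only for finitely many $m$, which $\mathcal I$ ignores.

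\textbf{The paper's own proof.} Your remark that $\varphi_{\mathcal J}(C)\cap\varphi_{\mathcal J}(D)\subseteq\varphi_{\mathcal J}(C\cap D)$ requires $\mathcal J$ maximal is exactly right, and it is where the paper's proof breaks. Its infinite case pushes an $(\mathcal I\times\mathcal J)$-AD family down through $\varphi_{\mathcal J}$ using this inclusion, which it lists as a fact for arbitrary ideals. It then concludes $\mathfrak{ad}(\mathcal I\times\mathcal J)\le\mathfrak{ad}(\mathcal I)$, which already fails for $\mathcal I$ maximal and $\mathcal J=\fin$.

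\textbf{What can be proved.} Run the tree construction above with $\mathcal J$-AD families of size $2^m$ at level $m$. This gives $\mathfrak{ad}(\mathcal I\times\mathcal J)=\mathfrak c$ for every proper $\mathcal I\supseteq\fin$ whenever $\mathfrak{ad}(\mathcal J)$ is infinite.

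If $\mathfrak{ad}(\mathcal J)=l$ is finite, write $\mathcal J=\bigcap_{i\le l}\mathcal J_i$ with $\mathcal J_i$ maximal. Then $\mathcal I\times\mathcal J=\bigcap_{i\le l}(\mathcal I\times\mathcal J_i)$. Each $\varphi_{\mathcal J_i}$ preserves intersections and satisfies $C\in\mathcal I\times\mathcal J_i\iff\varphi_{\mathcal J_i}(C)\in\mathcal I$. Sort an $(\mathcal I\times\mathcal J)$-AD family according to some $i$ with $C\notin\mathcal I\times\mathcal J_i$. Each class then maps injectively to an $\mathcal I$-AD family, giving $\mathfrak{ad}(\mathcal I\times\mathcal J)\le l\cdot\mathfrak{ad}(\mathcal I)$.

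Combined with your lower bound, the product formula holds exactly when $\mathfrak{ad}(\mathcal J)<\aleph_0$ or $\max\{\mathfrak{ad}(\mathcal I),\mathfrak{ad}(\mathcal J)\}=\mathfrak c$.
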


\begin{proof}
We distinguish two cases:
\begin{enumerate}
    \item Suppose that $\mathfrak{ad}(\mathcal I)=k$ and $\mathfrak{ad}(\mathcal J)=m$, where $k,m\in\mathbb N$. By Proposition \ref{caracterizacion ad(I) finite}, $\mathcal I$ is $k$-maximal and $\mathcal J$ is $m$-maximal. Let $\{\mathcal I_1,\ldots,I_k\}$ and $\{\mathcal J_1,\ldots,J_m\}$ be distinct maximal ideals such that
    $\mathcal I=\bigcap_{1\leq j\leq k}\mathcal I_j$ and $\mathcal J=\bigcap_{1\leq i\leq m}\mathcal J_i$. By \eqref{formulas fubini}, 
    \begin{gather*}
        \mathcal I\times\mathcal J=\bigcap_{(j,i)\in k\times m}\mathcal I_j\times\mathcal J_i.
    \end{gather*}
    Since $\mathcal I_j\times\mathcal J_i$ is maximal on $\mathbb N\times\mathbb N$ for each $1\leq j\leq k$ and $1\leq i\leq m$, we conclude that $\mathcal I\times\mathcal J$ is the intersection of $km$ maximal ideals. From Proposition \ref{caracterizacion ad(I) finite} we obtain that $\mathfrak{ad}(\mathcal I\times\mathcal J)=km$.

    \item Suppose that either $\mathfrak{ad}(\mathcal I)$ or $\mathfrak{ad}(\mathcal J)$ are infinite. We have to prove that 
    $\mathfrak{ad}(\mathcal I\times\mathcal J)=\max\{\mathfrak{ad}(\mathcal I),\mathfrak{ad}(\mathcal J)\}$.
    
Firstly, let $\gamma=\mathfrak{ad}(\mathcal I\times\mathcal J)$ and $\{A_\alpha\,\colon\,\alpha\in\gamma\}$ be a $(\mathcal I\times\mathcal J)$-AD family. From the above remark, we obtain that $\{\varphi_\mathcal J(A_\alpha)\,\colon\,\alpha\in\gamma\}$ is a $\mathcal I$-AD family. So, $\gamma\leq\mathfrak{ad}(\mathcal I)\leq\max\{\mathfrak{ad}(\mathcal I),\mathfrak{ad}(\mathcal J)\}$.

To prove the reverse inequality, let $\kappa$ be a cardinal such that $\mathfrak{ad}(\mathcal J)=\kappa$. By Theorem \ref{AD-families and KI} there is a Banach lattice isomorphism
$\phi\colon c_0(\kappa)\to\ell_\infty/c_{0,\mathcal J}$. On the other hand, the map 
\begin{gather*}
A\colon{\bf x}\in\ell_\infty/c_{0,\mathcal J}\mapsto({\bf x},{\bf x},\ldots)+c_{0,\mathcal I}(\ell_\infty/c_{0,\mathcal J})\in \ell_\infty(\ell_\infty/c_{0,\mathcal J}){\Big /}c_{0,\mathcal I}(\ell_\infty/c_{0,\mathcal J})
\end{gather*}
is a Banach lattice isomorphism. Thus, $A\circ\phi$ is a Banach lattice isomorphism from $c_0(\kappa)$ into $ \ell_\infty(\ell_\infty/c_{0,\mathcal J})/c_{0,\mathcal I}(\ell_\infty/c_{0,\mathcal J})$. By Theorem \ref{isomorphism quotient of a fubini-product} we obtain a Banach lattice isomorphism from $c_0(\kappa)$ into $\ell_\infty/c_{0,\mathcal I\times\mathcal J}$. Once again by Theorem \ref{AD-families and KI} we have $\kappa\leq \mathfrak{ad}(\mathcal I\times\mathcal J)$.

Now let $\lambda$ be a cardinal such that $\mathfrak{ad}(\mathcal I)=\lambda$. Theorem \ref{AD-families and KI} gives a Banach lattice isomorphism $\psi\colon c_0(\lambda)\to\ell_\infty/c_{0,\mathcal I}$. Fix ${\bf w}\in\ell_\infty/c_{0,\mathcal J}$ with $\|{\bf w}\|=1$. The map
\begin{gather*}
B\colon (x_n)\in\ell_\infty\mapsto(x_n{\bf w})\in\ell_\infty(\ell_\infty/c_{0,\mathcal J}){\Big /}c_{0,\mathcal I}(\ell_\infty/c_{0,\mathcal J})
\end{gather*}
is a Banach lattice homomorphism such that $\ker B=c_{0,\mathcal I}$. Let $\widehat B$ be the quotient map induced by $B$. Hence, $\widehat B\circ\psi$ is a Banach lattice isomorphism  from $c_0(\lambda)$ into $\ell_\infty(\ell_\infty/c_{0,\mathcal J}){\Big /}c_{0,\mathcal I}(\ell_\infty/c_{0,\mathcal J})$. By Theorems \ref{isomorphism quotient of a fubini-product} and \ref{AD-families and KI} we conclude that $\lambda\leq \mathfrak{ad}(\mathcal I\times\mathcal J)$. \qedhere
\end{enumerate}
\end{proof}

\section{Acknowledgments}
The first and third authors were supported by VIE, projects 4248 and C-2026-1. The first author was also supported by VIE-UIS Mobility Program, process 5644. 

\medskip
The second author was supported by the São Paulo Research Foundation
(FAPESP), grant no. 2025/08683-7.

\medskip

The authors would like to thank the organizers and participants of the fourth edition of the Brazilian Workshop on Banach Spaces for the stimulating atmosphere and many fruitful discussions. Part of the final preparation of this manuscript was carried out during the free hours of the workshop.
\medskip

We are also grateful to Professor Thomas Schlumprecht for his attention and helpful comments during the event. In particular, when discussing one of the results used in this paper, he pointed out that the literature often cites his Ph.D. thesis rather than the published article containing the corresponding theorem. When we asked him for the proper reference, however, he was unable to locate the article himself and ultimately advised us to cite the thesis. In view of this episode, we shall keep with the tradition in functional analysis and continue citing his Ph.D. thesis.

\section*{Declaration of generative AI and AI-assisted technologies}

OpenAI Codex, powered by the GPT-5.6 Sol model, was used as a research and writing aid during the preparation of this manuscript. In particular, the model was used to discuss and explore some preliminary ideas and results developed by the authors over the course of the preceding year, occasionally providing useful insights or suggesting possible directions for further investigation. All mathematical arguments, proofs, and final formulations were independently checked, developed, and approved by the authors.

\bibliographystyle{alpha}
\nocite{*}
\bibliography{bibliografia.bib}

\newcommand{\etalchar}[1]{$^{#1}$}
\begin{thebibliography}{HRVSA26}

\bibitem[AB06]{Aliprantis2006}
Charalambos~D. Aliprantis and Kim~C. Border.
\newblock {\em Infinite dimensional analysis}.
\newblock Springer-Verlag, 2006.

\bibitem[ACG{\etalchar{+}}02]{argyros}
S.~A. Argyros, J.~F. Castillo, A.~S. Granero, M.~Jim{\'e}nez, and J.~P. Moreno.
\newblock Complementation and embeddings of {$c_0(I)$} in {B}anach spaces.
\newblock {\em Proceedings of the London Mathematical Society}, 85(3):742--768, 2002.

\bibitem[AK16]{AK}
Fernando Albiac and Nigel~J. Kalton.
\newblock {\em Topics in {B}anach space theory}, volume 233 of {\em Graduate Texts in Mathematics}.
\newblock Springer, [Cham], second edition, 2016.
\newblock With a foreword by Gilles Godefory.

\bibitem[ASC{\etalchar{+}}16]{aviles-et-all}
Antonio Avil\'es, F\'elix~Cabello S\'anchez, Jes\'us M.~F. Castillo, Manuel Gonz\'alez, and Yolanda Moreno.
\newblock {\em Separably injective {B}anach spaces}, volume 2132 of {\em Lecture Notes in Mathematics}.
\newblock Springer, [Cham], 2016.

\bibitem[BGW11]{b-s-w}
Artur Bartoszewicz, Szymon G\l\c{a}b, and Artur Wachowicz.
\newblock Remarks on ideal boundedness, convergence and variation of sequences.
\newblock {\em Journal of Mathematical Analysis and Applications}, 375(2):431–435, March 2011.

\bibitem[BPaW17]{balcerzak}
Marek Balcerzak, Micha\l{} Pop\l~awski, and Artur Wachowicz.
\newblock The {B}aire category of ideal convergent subseries and rearrangements.
\newblock {\em Topology Appl.}, 231:219--230, 2017.

\bibitem[Cem84]{cembranos}
Pilar Cembranos.
\newblock {$C(K,\,E)$}\ contains a complemented copy of {$c\sb{0}$}.
\newblock {\em Proc. Amer. Math. Soc.}, 91(4):556--558, 1984.

\bibitem[CT13]{correa2013extensions}
C.~Correa and D.~V. Tausk.
\newblock On extensions of $c_0$-valued operators.
\newblock {\em Journal of Mathematical Analysis and Applications}, 405(2):400--408, 2013.

\bibitem[CT14]{correa2014compact}
C.~Correa and D.~V. Tausk.
\newblock Compact lines and the {S}obczyk property.
\newblock {\em Journal of Functional Analysis}, 266(9):5765--5778, 2014.

\bibitem[CT15]{correa2015c0}
C.~Correa and D.~V. Tausk.
\newblock On the $c_0$-extension property for compact lines.
\newblock {\em Journal of Mathematical Analysis and Applications}, 428(1):184--193, 2015.

\bibitem[DFS02]{diestel-and-all}
Joe Diestel, Jan Fourie, and Johan Swart.
\newblock The metric theory of tensor products ({G}rothendieck's r\'esum\'e{} revisited). {I}. {T}ensor norms.
\newblock {\em Quaest. Math.}, 25(1):37--72, 2002.

\bibitem[Fas51]{fast}
H.~Fast.
\newblock Sur la convergence statistique.
\newblock {\em Colloq. Math.}, 2:241--244 (1952), 1951.

\bibitem[FHH{\etalchar{+}}11]{fabian-et-al}
Marián Fabian, Petr Habala, Petr Hájek, Vicente Montesinos, and Václav Zizler.
\newblock {\em Banach Space Theory: The Basis for Linear and Nonlinear Analysis}.
\newblock Springer New York, 2011.

\bibitem[FKV18]{farkas-khomskii-vidnyanszky}
Barnabás Farkas, Yurii Khomskii, and Zoltán Vidnyánszky.
\newblock Almost disjoint refinements and mixing reals.
\newblock {\em Fundamenta Mathematicae}, 242(1):25–48, 2018.

\bibitem[FMRaS07]{filipow}
Rafa\l{} Filip\'ow, Nikodem Mro\.zek, Ireneusz Rec\l~aw, and Piotr Szuca.
\newblock Ideal convergence of bounded sequences.
\newblock {\em J. Symbolic Logic}, 72(2):501--512, 2007.

\bibitem[Fre02]{Fremlin}
David~H. Fremlin.
\newblock {\em Measure Theory, Volume 3: Measure Algebras}.
\newblock Torres Fremlin, Colchester, UK, 2002.

\bibitem[Gar67a]{Garlingbetagamadual}
D.~J.~H. Garling.
\newblock The {$\beta $}- and {$\gamma $}-duality of sequence spaces.
\newblock {\em Proc. Cambridge Philos. Soc.}, 63:963--981, 1967.

\bibitem[Gar67b]{GarlingTopologicalsequence}
D.~J.~H. Garling.
\newblock On topological sequence spaces.
\newblock {\em Proc. Cambridge Philos. Soc.}, 63:997--1019, 1967.

\bibitem[HRVSA26]{hrusák2026grothendieckidealsellinfty}
Michael Hrusák, Michael~A. Rincón-Villamizar, Luis Sáenz, and Carlos~Uzcátegui Aylwin.
\newblock Grothendieck ideals of $\ell_\infty$, 2026.

\bibitem[HS26]{HrusakSaenz}
Michael Hru\v{s}\'ak and Luis S\'aenz.
\newblock Complemented ideals of {$\ell_\infty$}.
\newblock {\em Colloquium Mathematicum}, March 2026.

\bibitem[Jan19]{Grebik}
Grebík Jan.
\newblock Ultrafilter extensions of asymptotic density.
\newblock {\em Commentationes Mathematicae Universitatis Carolinae}, 60(1):25–37, July 2019.

\bibitem[K\"69]{kothetopvec1}
Gottfried K\"othe.
\newblock {\em Topological vector spaces. {I}}, volume Band 159 of {\em Die Grundlehren der mathematischen Wissenschaften}.
\newblock Springer-Verlag New York, Inc., New York, 1969.
\newblock Translated from the German by D. J. H. Garling.

\bibitem[Kan19]{kania}
Tomasz Kania.
\newblock A letter concerning {L}eonetti's paper `{C}ontinuous projections onto ideal convergent sequences'.
\newblock {\em Results Math.}, 74(1):Paper No. 12, 4, 2019.

\bibitem[KM{\v S}S05]{k-m-s}
P.~Kostyrko, M.~Ma{\v c}aj, T.~{\v S}al\'at, and M.~Sleziak.
\newblock {$\mathscr I$}-convergence and extremal {$\mathscr I$}-limit points.
\newblock {\em Math. Slovaca}, 55(4):443--464, 2005.

\bibitem[KST22]{kadetsetal2022}
Vladimir Kadets, Dmytro Seliutin, and Jacek Tryba.
\newblock Conglomerated filters and statistical measures.
\newblock {\em Journal of Mathematical Analysis and Applications}, 509(1):125955, May 2022.

\bibitem[K{\v S}W01]{k-s-w}
Pavel Kostyrko, Tibor {\v S}al\'at, and W\l adys\l~aw Wilczy\'nski.
\newblock {$\mathscr I$}-convergence.
\newblock {\em Real Anal. Exchange}, 26(2):669--685, 2000/01.

\bibitem[Leo18]{Leonetti2018}
Paolo Leonetti.
\newblock Continuous projections onto ideal convergent sequences.
\newblock {\em Results Math.}, 73(3):Paper No. 114, 5, 2018.

\bibitem[LR90]{leung-rabiger}
Denny Leung and Frank R\"abiger.
\newblock Complemented copies of {$c_0$} in {$l^\infty$} sums of {B}anach spaces.
\newblock {\em Illinois J. Math.}, 34(1):52--58, 1990.

\bibitem[MN80]{MagerlNamioka}
G.~M\"{a}gerl and I.~Namioka.
\newblock Intersection numbers and weak separability of spaces of measures.
\newblock {\em Mathematische Annalen}, 249(3):273–279, October 1980.

\bibitem[Mol91]{molto}
A.~Molt{\'o}.
\newblock On a theorem of {S}obczyk.
\newblock {\em Bulletin of the Australian Mathematical Society}, 43(1):123--130, 1991.

\bibitem[Pat93]{patterson}
W.~M. Patterson.
\newblock Complemented $c_0$-subspaces of a non-separable {$C(K)$}-space.
\newblock {\em Canadian mathematical bulletin}, 36(3):351--357, 1993.

\bibitem[Ros70]{rosenthal}
Haskell~P. Rosenthal.
\newblock On injective banach spaces and the spaces {$L^\infty(\mu)$} for finite measures {$\mu$}.
\newblock {\em Acta Mathematica}, 124(0):205–248, 1970.

\bibitem[RT23]{VT}
Victor dos~Santos Ronchim and Daniel~V. Tausk.
\newblock Extension of {$c_0(I)$}-valued operators on spaces of continuous functions on compact lines.
\newblock {\em Studia Math.}, 268(3):259--289, 2023.

\bibitem[RVA26]{rincon-uzcateguiII}
Michael~A. Rincón-Villamizar and Carlos~Uzcátegui Aylwin.
\newblock On the complementation of spaces of {$\mathcal I$}-null sequences, 2026.
\newblock arXiv:2507.13866.

\bibitem[RVUA24]{rincon-uzcategui}
Michael~A. Rinc\'on-Villamizar and Carlos Uzc\'ategui-Aylwin.
\newblock Banach spaces of {$\mathcal{I}$}-convergent sequences.
\newblock {\em J. Math. Anal. Appl.}, 536(2):Paper No. 128271, 19, 2024.

\bibitem[S{\'a}n06]{sanchez2006yet}
F.~C. S{\'a}nchez.
\newblock Yet another proof of {S}obczyk's theorem.
\newblock {\em London Mathematical Society}, 337:133, 2006.

\bibitem[Sar66]{Sargent1966}
W.~L.~C. Sargent.
\newblock On compact matrix transformations between sectionally bounded <i>bk</i> -spaces.
\newblock {\em Journal of the London Mathematical Society}, s1-41(1):79–87, 1966.

\bibitem[Sch74]{schaeffer}
Helmut~H. Schaefer.
\newblock {\em Banach lattices and positive operators}, volume Band 215 of {\em Die Grundlehren der mathematischen Wissenschaften}.
\newblock Springer-Verlag, New York-Heidelberg, 1974.

\bibitem[Sch87]{schlumprecht1987limitierte}
Thomas Schlumprecht.
\newblock {\em Limitierte {M}engen in {B}anachr{\"a}umen}.
\newblock Dissertation, Ludwig-Maximilians-Universit{\"a}t M{\"u}nchen, Munich, Germany, 1987.

\bibitem[SCY00]{atob2000sobczyk}
F.~C. S{\'a}nchez, J.~M.~F. Castillo, and D.~Yost.
\newblock Sobczyk’s theorems from a to b.
\newblock {\em Extracta Math}, 15(2):391--420, 2000.

\bibitem[Wil84]{WilanskySummability}
Albert Wilansky.
\newblock {\em Summability through functional analysis}, volume~85 of {\em North-Holland Mathematics Studies}.
\newblock North-Holland Publishing Co., Amsterdam, 1984.
\newblock Notas de Matem\'atica, 91. [Mathematical Notes].

\end{thebibliography}

\end{document}